\documentclass[a4paper,11pt]{amsart}

\usepackage{amsmath,amsfonts,amssymb,amscd,amsthm,amsbsy,upref}
\usepackage{verbatim}
\usepackage{amssymb,amsthm,amsmath,enumerate}
\usepackage{amsrefs}

\numberwithin{equation}{section}



\usepackage{enumerate}

\usepackage{amsrefs}
\usepackage{amsthm}
\usepackage{amsmath}		
\usepackage{amssymb}	
\usepackage{amsfonts}


\usepackage{tikz}
\usepgflibrary{shadings}
\usetikzlibrary{arrows,shapes,positioning}

\usepackage[all]{xy}

\newcommand{\N}{\mathbb N}
\newcommand{\Ndb}{\mathbb N}

\newcommand{\R}{\mathbb R}

\newcommand{\eps}{\varepsilon}

\begin{document}

\theoremstyle{plain}
\newtheorem{thm}{Theorem}[section]
\newtheorem{theo}[thm]{Theorem}
\newtheorem{prop}[thm]{Proposition}
\newtheorem{coro}[thm]{Corollary}
\newtheorem{lema}[thm]{Lemma}
\newtheorem{ejem}[thm]{Example}
\newtheorem{Remark}[thm]{Remark}
\newtheorem{fact}[thm]{Fact}
\newtheorem{open}[thm]{PROBLEM}

\theoremstyle{definition}
\newtheorem{defi}[thm]{Definition}
\newtheorem{rema}[thm]{Remark}

\title{Nonlinear aspects of \linebreak super weakly compact sets}

\author{G.~Lancien}
\address{Gilles Lancien, Laboratoire de Math\'ematiques de Besan\c con, Universit\'e Bourgogne Franche-Comt\'e, CNRS UMR-6623, 16 route de Gray, 25030 Besan\c con C\'edex, Besan\c con, France}
\email{gilles.lancien@univ-fcomte.fr}

\author{M.~Raja}
\address{Matias Raja, Departamento de Matem\'{a}ticas, Universidad de Murcia, Campus de Espinardo, 30100 Espinardo, Murcia, Spain}
\email{matias@um.es}

\thanks{The first named author was supported by the French ``Investissements d'Avenir'' program, project ISITE-BFC (contract
 ANR-15-IDEX-03).}
\thanks{The second named author was supported by the Grants of Ministerio de Econom\'ia, Industria y Competitividad MTM2017-83262-C2-2-P; and Fundaci\'on S\'eneca Regi\'on de Murcia 20906/PI/18.}

\keywords{super weakly compact sets, ultrapowers, uniformly convex sets, non linear embeddings in Banach spaces}
\subjclass[2010]{46B20, 46B80}

\maketitle

\begin{abstract} The notion of super weak compactness for subsets of Banach spaces is a strengthening of the weak compactness that can be described as a local version of super-reflexivity.
A recent result of K. Tu \cite{KT} which establishes
that the closed convex hull of a super weakly compact set is super weakly compact has removed the main obstacle to further development of the theory.
In this paper we provide a variety of results around super weak compactness in order to show the great scope of this notion.
We also give non linear characterizations of super weak compactness in terms of the (non) embeddability of special trees and graphs. We conclude with a few relevant examples of super weakly compact sets in non super-reflexive Banach spaces.

\end{abstract}

\section{Background}

The uniform convexity in Banach spaces and the related notion of super-reflexivity have been largely exploited along decades, as they provide a natural generalization of both finite dimensional spaces and Hilbert spaces. Some ideas behind have been distilled leading to notions such as uniformly convex function
or super-weakly compact operators. More recently, some local versions of super-reflexivity have been proposed \cite{raja, raja2, Cheng}. Before going on, let us recall that we are dealing with real Banach spaces (mostly denoted by $X$ for general results along this paper). We believe that our notation is totally standard and it can be found in fundamental books such as \cite{banach} or \cite{albiackalton} together with the basic results needed for the understanding of what follows.
We will start with the definition of super weak compactness.

\begin{defi}\label{defsuper}
A weakly closed subset $K \subset X$ is said to be super weakly compact (SWC) if  $K^{\mathcal U}$ is a relatively weakly compact subset of $X^{\mathcal U}$ for any free ultrafilter ${\mathcal U}$.
\end{defi}

Let $I$ be an infinite set and denote $\ell_\infty(X)$ the space of all bounded families $(x_i)_{i\in I}$ in $X$ equipped with the norm $\|(x_i)_{i}\|=\sup_{i\in I}\|x_i\|_X$. Given a free ultrafilter ${\mathcal U}$ on  $I$, recall that $X^{\mathcal U}$ is the quotient of $\ell_\infty(X)$ by the subspace of those $(x_i)_{i \in I}$ such that $\lim_{i, \mathcal U} \|x_i\|=0$. Then $K^{\mathcal U}$ is the set of all equivalence classes in $X^{\mathcal U}$ of families $(x_i)_{i\in I}$ such that $x_i\in K$ for all $i\in I$.
Note that in Definition \ref{defsuper}, we only ask $K^{\mathcal U}$ to be relatively weakly compact because we cannot ensure that it will be weakly closed in $X^{\mathcal U}$. For the characterization of super weak compactness, it is enough to consider just one free ultrafilter on ${\Bbb N}$ since, by the Eberlein-\v{S}mulyan theorem \cite[Theorem 3.109]{banach}, the weak compactness is separably determined (this will be further developed in section 3).

Those readers acquainted with the notion of super-reflexivity will note that Definition \ref{defsuper} implies straightforwardly that the closed unit ball $B_X$ of a Banach space $X$ is SWC if and only if $X$ is super-reflexive. However, there are examples of SWC sets which do not embed in super-reflexive spaces \cite[Example 3.11]{raja2}. Other examples of SWC sets show that they are quite ubiquitous. For instance, any weakly compact subset of $L^1(\mu)$ for $\mu$ a finite measure, or more generally of $L^1(\mu,X)$ with $X$ super-reflexive, is SWC, see \cite{raja2} and section 6 of this paper for more general results.

The definition of a super weakly compact set was introduced in \cite{Cheng} for convex sets in terms of finite representability, in a very similar way as the one for Banach spaces, see \cite{beau} or \cite{banach}. The same class of sets was previously studied by the second named author in \cite{raja} as \emph{finitely dentable sets}, which means that the dentability index $Dz(K,\varepsilon)$ is finite for every $\varepsilon>0$ (see the definition of this index in section 2). In particular, one can find there the relationship (via interpolation) with the \emph{uniformly convexifying operators} of B. Beauzamy \cite{beau1}, later called \emph{super weakly compact operators}. To explain the definition, note that an operator $T: X \rightarrow Y$ induces an operator between the ultrapowers of the spaces $T^{\mathcal U}: X^{\mathcal U} \rightarrow Y^{\mathcal U}$ for a free ultrafilter ${\mathcal U}$ on an index set as follows:  $T^{\mathcal U}((x_i)_i)=(T(x_i))_i$, for $(x_i)_i\in X^{\mathcal U}$.
Then, an operator $T: X \rightarrow Y$  is said to be super weakly compact if $T^{\mathcal U}$ is weakly compact for any ultrafilter ${\mathcal U}$ (equivalently, a free ultrafilter on ${\Bbb N}$).
A more updated account of properties of SWC convex sets (SWCC) can be found in \cite{raja2}, as well as some renorming properties of the Banach spaces generated by such sets.

The properties of non convex SWC sets have been extensively studied in the recent paper \cite{Cheng3}. Among other things, it is proved there that a set $A \subset X$ such that $A^{\mathcal U}$ is relatively weakly compact in $X^{\mathcal U}$ (such an $A$ is called {\it relatively SWC}) has SWC weak closure (Proposition 3.10 in \cite{Cheng3}). This result is quite relevant to us since the characterizations that we will provide later are actually for relative super weak compactness.

One problem left open in \cite{Cheng} was to know whether the closed convex hull of a SWC set is SWC. This has been solved in the affirmative in a recent paper by Kun Tu \cite{KT}, who provided a version of the Krein-\v{S}mulian theorem for SWC sets based on a short and clever argument. This will be a precious tool for the applications developed in this paper.

Let us describe the contents of the remaining sections of the paper.
In section \ref{RemoveConvex},  we exploit the stability of super weak compactness by closed convex hulls to derive properties for SWC sets that were known for SWCC sets. This will lead us to a characterization of super weakly compact sets as subsets of the image of a unit ball of a reflexive Banach space by a super weakly compact operator (see Theorem \ref{interpol}). This relation to SWC operators is a source of properties for SWC sets: they have the Banach-Saks property, they are uniformly Eberlein and the spaces they generate have good renormings (see \cite{raja,raja2} for SWCC sets and Corollary \ref{applicationKST} for SWC sets).

The rest of the paper is devoted to several developments around super weakly compact sets (SWC) and super weakly compact convex sets (SWCC), especially those related to non linear properties.  In  section \ref{James} we will discuss criterions to recognize super weak compactness in the absence of convexity based on a theorem of James. In section \ref{UCsets} we will introduce uniformly convex sets and discuss their properties as a tool to enjoy properties of uniformly convex norms without renorming the whole space. In  section \ref{nonlinear}, we establish analogues for SWC sets of Bourgain's \cite{bourgain} and Johnson-Schechtman's \cite{JS} metric characterizations of super-reflexivity in terms of the embeddability of dyadic hyperbolic trees, diamond graphs or Laakso graphs. These results are obtained by combining the non linear characterization of super weakly compact operators due to Causey and Dilworth \cite{CauseyDilworth} and Kun Tu's result (Theorem \ref{KST} below). We also  characterize non SWC sets in terms of the embeddabilty of the infinite dyadic tree, obtaining an analogue of Baudier's \cite{Baudier} characterization of super-reflexivity. Finally we provide in section \ref{examples} several examples and properties of SWC sets in particular Banach spaces: $L^1(\mu)$-spaces, $C(K)$-spaces, JBW$^*$-triples, $c_0$ and $L^p(\mu, X)$ spaces.

\section{First consequences of removing the convexity}\label{RemoveConvex}

Let us first us recall the most important characterizations of SWC sets among convex sets. For that
we will need some assorted definitions. 
Let $C$ be a bounded closed convex set of $X$. We say that $C$ is {\it dentable} if for any nonempty closed convex subset $D$ of $C$ and any
$\varepsilon >0$ it is possible to find an open halfspace $H$ of $X$ (i.e a set of the form $H=\{x\in X,\ x^*(x)>\alpha\}$, with $x^*\in X^*$ and $\alpha \in \R$) intersecting $D$ such that
$\mbox{diam}(D \cap H) \le \varepsilon$. We shall denote ${\Bbb H}$ the set of all the open half-spaces of $X$ and call ``slice of $D$'' a set of the form $D \cap H$, where $H\in {\Bbb H}$. If $C$ is dentable we may consider the following ``derivation'':
\[ [D]'_{\varepsilon} = \{ x \in D: \mbox{diam}(D \cap H)>\varepsilon,\ \text{for any}\ H \in {\Bbb H}\ \text{s.t.}\
x \in H \} .\]
Clearly, $[D]'_{\varepsilon}$
is what remains of $D$ after removing all the slices of $D$ of diameter at most
$\varepsilon$. Consider now the sequence of sets defined by $[C]_{\varepsilon}^{0}=C$ and, for every $n \in {\Bbb N}$, inductively by
\[ [C]_{\varepsilon}^{n}=[[C]_{\varepsilon}^{n-1}]'_{\varepsilon}. \]
If there is an $n$ in ${\Bbb N}$ such that $[C]_{\varepsilon}^{n-1} \not = \emptyset$ and  $[C]_{\varepsilon}^{n}  = \emptyset$ we set $Dz(C,\varepsilon)=n$. We say that $C$ is {\it finitely dentable} if $Dz(C,\varepsilon)$ is finite for every $\varepsilon>0$.

For the last section of the paper, we also need to define a {\it fragmentability index} for weakly closed and bounded subsets of $X$. It is based on a different derivation. Let $D$ be a nonempty weakly closed and bounded subset of a Banach space $X$ and $\varepsilon>0$. Our next derivation is then defined as follows:
\[ (D)'_{\varepsilon} = \{ x \in D: \mbox{diam}(D \cap V)>\varepsilon,\ \text{for any weakly open set $V$ s.t.}\
x \in V \} .\]
Similarly, $(D)'_{\varepsilon}$ is what remains of $D$ after removing all the weakly open subsets of $D$ of diameter at most $\varepsilon$. Again, for $C$ weakly closed and bounded, we define inductively  $(C)_{\varepsilon}^{0}=C$ and, for every $n \in {\Bbb N}$, $(C)_{\varepsilon}^{n}=((C)_{\varepsilon}^{n-1})'_{\varepsilon}$.
If there is an $n$ in ${\Bbb N}$ such that $(C)_{\varepsilon}^{n-1} \not = \emptyset$ and  $(C)_{\varepsilon}^{n}  = \emptyset$ we set $S(C,\varepsilon)=n$. We say that $C$ is {\it finitely fragmentable} if $S(C,\varepsilon)$ is finite for every $\varepsilon>0$.

A convex set $C \subset X$ is said to have the {\it finite tree property} if there exists $\varepsilon>0$ such that $C$ contains $\varepsilon$-separated dyadic trees of arbitrary height. Recall that a dyadic tree of height $n \in {\Bbb N}$ is a set of the form $\{ x_s: |s| \leq n \}$,
indexed by finite sequences $s \in \bigcup_{k=0}^n \{0,1\}^k$ of length $|s| \leq n$, such that
$x_s=2^{-1} ( x_{s\frown 0} +  x_{s\frown 1})$ for every $|s| < n$, where
$\{0,1\}^0:=\{\emptyset\}$ indexes the root $x_\emptyset$
and ``$\frown$'' denotes concatenation.
We say that a dyadic tree $\{ x_s: |s| \leq n \}$ is
$\varepsilon$-separated if $\| x_{s\frown 0} - x_{s\frown 1} \| \geq \varepsilon $ for every $|s|<n$.
A function $f:C \rightarrow {\Bbb R}$ defined on a convex subset $C \subset X$ is said to be {\it uniformly convex} if for every $\varepsilon>0$ there is $\delta>0$ such that $\|x-y\| < \varepsilon$ whenever $x,y \in C$ are such that
\[  \frac{f(x)+f(y)}{2} - f\Big(\frac{x+y}{2}\Big) < \delta .\]

After all these preparatory definitions the most relevant equivalences of super weak compactness for convex sets are listed in the following statement, which is taken from \cite{raja2} (Proposition 2.4).

\begin{prop}\label{superequival}
Let $X$ be a Banach space and $K \subset X$ a bounded closed convex subset. The following conditions are equivalent:
\begin{itemize}
\item[(i)] $K$ is super weakly compact;
\item[(ii)] $K$ is finitely dentable;
\item[(iii)] $K$ does not have the finite tree property;
\item[(iv)] There is a reflexive Banach space $Z$ and  a super weakly compact operator $T:Z \rightarrow X$ such that $K \subset T(B_Z)$;
\item[(v)] $K$ supports a bounded uniformly convex function;
\item[(vi)] $X$ has an equivalent norm $|\!|\!| \cdot |\!|\!|$ such that  $|\!|\!| \cdot |\!|\!|^2$ is uniformly convex on $K$.
\end{itemize}
\end{prop}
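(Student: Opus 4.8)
The plan is to prove the six conditions equivalent through a cycle that separates the purely geometric equivalences from the analytic ones. I would first dispose of the easy links. The implication (vi)$\Rightarrow$(v) is immediate, since $|\!|\!|\cdot|\!|\!|^2$ restricted to $K$ is a bounded uniformly convex function. The implication (iv)$\Rightarrow$(i) follows directly from Definition \ref{defsuper}: if $T$ is super weakly compact then $T^{\mathcal U}$ is weakly compact, so $T^{\mathcal U}(B_{Z^{\mathcal U}})$ is relatively weakly compact, and since $K\subset T(B_Z)$ we get $K^{\mathcal U}\subset T^{\mathcal U}(B_{Z^{\mathcal U}})$; here one only needs that a subset of a relatively weakly compact set is relatively weakly compact, which is transparent at the level of ultrapowers. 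Likewise, a closed convex subset of a super weakly compact convex set is super weakly compact, straight from the definition.

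Next I would establish the geometric core (ii)$\Leftrightarrow$(iii) by a direct comparison of the two derivations. If $K$ contains an $\varepsilon$-separated dyadic tree of height $n$, an induction on the depth shows that the nodes at depth $k$ survive $[K]_{\varepsilon/2}^{\,n-k}$, so that arbitrarily tall trees force $Dz(K,\varepsilon/2)=\infty$. Conversely, if $[C]_{\varepsilon}^{n}\neq\emptyset$ for all $n$, then a point surviving the derivation lies, by convexity, arbitrarily close to midpoints of pairs of points of $[C]_{\varepsilon}^{n-1}$ at distance exceeding $\varepsilon$; iterating this midpoint selection builds $\varepsilon'$-separated dyadic trees of every height. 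I expect this to be routine bookkeeping once the correspondence between surviving levels and tree depth is set up. To connect these with the definition, I would prove (i)$\Rightarrow$(iii) by contraposition: choosing for each $n$ an $\varepsilon$-separated tree of height $n$ in $K$ and taking the ultraproduct along $\mathcal U$ produces an \emph{infinite} $\varepsilon$-separated dyadic tree in $K^{\mathcal U}$; along any branch this is a bounded non-convergent martingale, incompatible with the Radon--Nikod\'ym property enjoyed by every relatively weakly compact convex set, so $K^{\mathcal U}$ cannot be relatively weakly compact. For the reverse passage I would use that the index $Dz(\cdot,\varepsilon)$ is determined by finite $\varepsilon$-configurations and is therefore finitely representable, whence $Dz(K^{\mathcal U},\varepsilon)=Dz(K,\varepsilon)$ and finite dentability transfers to $X^{\mathcal U}$.

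The heart of the proof, and the step I expect to be the main obstacle, is manufacturing genuine uniform convexity out of the mere finiteness of the dentability index, that is (ii)$\Rightarrow$(v) and (v)$\Rightarrow$(vi). This is the local analogue of the Enflo--Pisier renorming theorem for super-reflexive spaces, and the difficulty is the same: the finiteness of each $Dz(K,\varepsilon)$ must first be \emph{self-improved} into a quantitative, scale-uniform estimate before it can feed a modulus of convexity. I would route this through the interpolation step (ii)$\Rightarrow$(iv): symmetrizing $K$ and applying the Davis--Figiel--Johnson--Pe\l czy\'nski construction to the sets $2^{m}K+2^{-m}B_X$ yields a reflexive space $Z$ and an operator $T\colon Z\to X$ with $K\subset T(B_Z)$, the uniform index bounds making $T$ uniformly convexifying in the sense of Beauzamy, hence super weakly compact. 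One then transports a uniformly convex function associated with a uniformly convex renorming adapted to $T$ back to $K$ through $T$, producing the bounded uniformly convex $f$ of (v). Finally, (v)$\Rightarrow$(vi) is an extension/renorming argument on the closed linear span of $K$, gluing $f$ to the norm to obtain an equivalent $|\!|\!|\cdot|\!|\!|$ with $|\!|\!|\cdot|\!|\!|^2$ uniformly convex on $K$.

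To close the cycle I would verify (v)$\Rightarrow$(iii) directly, which is clean: an $\varepsilon$-separated tree would force $f$ to drop by a fixed amount $\delta(\varepsilon)$ at each level through the defining inequality applied at every node, contradicting the boundedness of $f$ once the height exceeds $(\sup_K f-\inf_K f)/\delta(\varepsilon)$. Since the modulus $\delta(\varepsilon)$ is a finitary condition, it lifts to an $f^{\mathcal U}$ uniformly convex on $K^{\mathcal U}$, ruling out the infinite trees that obstruct weak compactness and hence giving (v)$\Rightarrow$(i). With (vi)$\Rightarrow$(v)$\Rightarrow$(iii)$\Leftrightarrow$(ii), the interpolation (ii)$\Rightarrow$(iv)$\Rightarrow$(i), and the analytic bridge (ii)$\Rightarrow$(v)$\Rightarrow$(vi) in place, every condition implies every other. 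The delicate point throughout is quantitative control: one must keep the dentability bounds uniform across scales so that both the interpolation space $Z$ and the transported function $f$ inherit a true modulus, rather than merely a qualitative termination of the derivation.
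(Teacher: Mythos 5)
You should first note that the paper does not prove this proposition at all: it is quoted verbatim from \cite{raja2} (Proposition 2.4), so your proposal has to be measured against the standard route in the literature, which your outline partly parallels (DFJP interpolation, Beauzamy's uniformly convexifying operators, lifting finitary moduli to ultrapowers). The genuine gap is in your ``geometric core'' (ii)$\Leftrightarrow$(iii), specifically the direction you call routine bookkeeping: from $[C]_\varepsilon^n\neq\emptyset$ for all $n$ you claim that a surviving point lies ``arbitrarily close to midpoints of pairs of points of $[C]_\varepsilon^{n-1}$ at distance exceeding $\varepsilon$.'' This is false. What membership in the derived set actually gives, via Hahn--Banach, is $x\in\overline{\mbox{conv}}\bigl([C]_\varepsilon^{n-1}\setminus B(x,\varepsilon/2)\bigr)$, i.e. $x\approx\lambda y+(1-\lambda)z$ with $\|y-z\|\geq \varepsilon/2$ but with $\lambda$ completely uncontrolled; you cannot normalize to $\lambda=1/2$ while keeping the separation, since the two half-weight averages of a symmetric configuration of far points can both collapse back onto $x$. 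A concrete counterexample to your claimed step: the constant function $\mathbf{1}\in B_{C[0,1]}$ satisfies that every slice of the ball containing it has diameter $2$, so it survives every derivation; yet a point $u$ with $\|u-\mathbf{1}\|_\infty<\delta$ can be the midpoint of an $\varepsilon$-separated pair in $B_{C[0,1]}$ only if $\delta\geq\varepsilon/2$, so $\mathbf{1}$ is \emph{not} approximately a midpoint of separated pairs. Hence your inductive construction of exact $\varepsilon'$-separated dyadic trees fails at the very first node. The established way to build the trees of (iii) --- and the one this paper itself points to right after Proposition \ref{SWC1} --- is through James's theorem: non--super weak compactness produces finite sequences with uniformly separated convex block hulls, and averaging over dyadic partitions of the index set yields the exact-midpoint trees; i.e. the tree construction is attached to $\neg$(i)$\Rightarrow\neg$(iii), not to a direct $\neg$(ii)$\Rightarrow\neg$(iii). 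Since in your architecture condition (iii) connects to all the others only through this flawed implication, (iii) is left unproved equivalent.

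A secondary but real problem is your ``reverse passage'' $Dz(K^{\mathcal U},\varepsilon)=Dz(K,\varepsilon)$ justified by the index being ``determined by finite $\varepsilon$-configurations.'' The slices of $K^{\mathcal U}$ are cut by functionals in $(X^{\mathcal U})^*$, which strictly contains $(X^*)^{\mathcal U}$ --- precisely the difficulty the paper emphasizes at the opening of Section \ref{James} as the reason James's intrinsic criterion is needed --- so while $Dz(K^{\mathcal U},\varepsilon)\geq Dz(K,\varepsilon)$ is clear, the converse is a substantive ``super'' transfer, normally obtained only \emph{after} (v) by lifting the finitary modulus of a bounded uniformly convex function (as you yourself do, correctly, in your last paragraph). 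This gap is partially redundant in your scheme, since the chain (ii)$\Rightarrow$(iv)$\Rightarrow$(i) can replace it, but then everything rests on the DFJP/Beauzamy interpolation step, which you only sketch; that step is the content of \cite{raja} and does work, so your honest flagging of it as the main obstacle is appropriate. Likewise, in (v)$\Rightarrow$(i), the phrase ``ruling out the infinite trees that obstruct weak compactness'' again silently invokes the James-based tree construction, so James's theorem is unavoidable in your proof even though your outline nowhere names it.
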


We recall that a subset $A$ of a Banach space $X$ is said to be \emph{relatively super weakly compact} (relatively SWC) if $A^{\mathcal U}$ is relatively weakly compact in $X^{\mathcal U}$.
The already mentioned result of K. Tu can be stated as follows.

\begin{theo}[\cite{KT}]\label{KST}
The closed convex hull of a relatively SWC set is SWC.
\end{theo}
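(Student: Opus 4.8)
The plan is to deduce the statement from the classical Krein--\v{S}mulian theorem applied \emph{inside} the ultrapower $X^{\mathcal U}$, with the whole difficulty concentrated in a single inclusion. Write $K=\overline{\conv}(A)$; since $K$ is closed and convex it is weakly closed, so by Definition \ref{defsuper} it suffices to prove that $K^{\mathcal U}$ is relatively weakly compact in $X^{\mathcal U}$ for each free ultrafilter $\mathcal U$. Fix such a $\mathcal U$. By hypothesis $A^{\mathcal U}$ is relatively weakly compact in the Banach space $X^{\mathcal U}$, so the classical Krein--\v{S}mulian theorem guarantees that $C:=\overline{\conv}(A^{\mathcal U})$, the norm-closed convex hull computed in $X^{\mathcal U}$, is weakly compact. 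The heart of the matter is then the inclusion $K^{\mathcal U}\subseteq C$, after which $K^{\mathcal U}$, being a subset of a weakly compact set, is relatively weakly compact and we are done.

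To prove the inclusion I would first reduce to finite convex combinations: given $z=[(x_i)_i]\in K^{\mathcal U}$ and $\eps>0$, each $x_i\in\overline{\conv}(A)$ is approximated in norm by some $y_i=\sum_{j=1}^{n_i}\lambda_{i,j}a_{i,j}\in\conv(A)$, and since $C$ is norm closed it is enough to show $[(y_i)_i]\in C$. Here lies the genuine obstacle: the number of terms $n_i$ need not be bounded along $\mathcal U$, so one cannot simply pass to ultrafilter limits of the coefficients. The device to overcome this is to separate using only \emph{diagonal} functionals. Recall that $(X^*)^{\mathcal U}$ embeds isometrically into $(X^{\mathcal U})^*$ via $[(\psi_i)_i]\big([(w_i)_i]\big)=\lim_{i,\mathcal U}\psi_i(w_i)$, and that this subspace is norming; let $\tau$ denote the coarser locally convex topology $\sigma\big(X^{\mathcal U},(X^*)^{\mathcal U}\big)$ it induces on $X^{\mathcal U}$, whose dual is exactly $(X^*)^{\mathcal U}$.

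The clever point is that $\tau$ does not destroy the compactness of $C$: being Hausdorff and coarser than the weak topology, $\tau$ agrees with the weak topology on the weakly compact set $C$, so $C$ is $\tau$-compact, hence $\tau$-closed. Consequently, if $[(y_i)_i]$ were not in $C$, Hahn--Banach separation in the dual pair $\big(X^{\mathcal U},(X^*)^{\mathcal U}\big)$ would furnish a diagonal functional $\psi=[(\psi_i)_i]$, with $\|\psi_i\|\le 1$, such that $\psi([(y_i)_i])>\sup_{A^{\mathcal U}}\psi$. Choosing for each $i$ an index $j(i)$ maximising $\psi_i(a_{i,j})$ and setting $a_i=a_{i,j(i)}\in A$, the element $\alpha=[(a_i)_i]\in A^{\mathcal U}\subseteq C$ would then satisfy
\[ \psi(\alpha)=\lim_{i,\mathcal U}\max_{1\le j\le n_i}\psi_i(a_{i,j})\ \ge\ \lim_{i,\mathcal U}\sum_{j=1}^{n_i}\lambda_{i,j}\psi_i(a_{i,j})=\psi([(y_i)_i])>\sup_{A^{\mathcal U}}\psi\ \ge\ \psi(\alpha), \]
a contradiction. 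This forces $[(y_i)_i]\in C$, and letting $\eps\to 0$ gives $z\in C$, whence $K^{\mathcal U}\subseteq C$. Thus the step I expect to be decisive is exactly the handling of convex combinations with an unbounded number of terms; the naive coefficient-limit argument fails there, and what rescues it is the passage to the norming subspace $(X^*)^{\mathcal U}$ together with the observation that the weak compactness of $C$ persists in the coarser topology $\tau$, so that a single diagonal functional separates and the elementary maximising-index estimate can be run.
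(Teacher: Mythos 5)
Your proof is correct, and it takes a genuinely different route from the source of this statement: the paper does not prove Theorem \ref{KST} at all, but quotes it from Tu \cite{KT}, whose argument is quantitative --- he introduces a measure of super weak noncompactness via ultrapowers and proves a quantitative Krein--\v{S}mulian inequality relating the measure of the convex hull to that of the set, the theorem being the case where the measure vanishes. Your argument is qualitative and direct: classical Krein--\v{S}mulian applied in $X^{\mathcal U}$, plus the inclusion $K^{\mathcal U}\subseteq C=\overline{\conv}(A^{\mathcal U})$, and you correctly identify that this inclusion is the whole difficulty. Your mechanism for it is sound: $(X^*)^{\mathcal U}$ is a norming (hence separating) subspace of $(X^{\mathcal U})^*$, so $\tau=\sigma\big(X^{\mathcal U},(X^*)^{\mathcal U}\big)$ is a Hausdorff locally convex topology, coarser than the weak topology, whence it agrees with the weak topology on the weakly compact set $C$ (continuous bijection from a compact space to a Hausdorff space); thus $C$ is $\tau$-compact, hence $\tau$-closed, Hahn--Banach separation in the dual pair produces a diagonal functional $[(\psi_i)_i]$, and the maximizing-index estimate $\sum_j\lambda_{i,j}\psi_i(a_{i,j})\le\max_j\psi_i(a_{i,j})$ is insensitive to the number $n_i$ of terms, which is exactly what defeats the unbounded-combination obstruction; the normalization $\|\psi_i\|\le 1$ is harmless since only boundedness of a representative is needed for the pairing. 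It is worth noting that the weak compactness of $C$ is indispensable in your inclusion, not a convenience: for $A$ the unit vector basis of $\ell_1$ and $y_i=\frac1i\sum_{k\le i}e_k$ one checks $\|[(y_i)_i]-w\|\ge 1-\frac{m}{i}\to 1$ for every $w\in\conv(A^{\mathcal U})$ supported on $m$ elements of $A^{\mathcal U}$, so $\overline{\conv}(A)^{\mathcal U}\not\subseteq\overline{\conv}(A^{\mathcal U})$ for general bounded $A$; in your proof the compactness enters precisely where it must, through the $\tau$-closedness of $C$, without which the separation step would be unavailable. Comparing the two approaches: yours is self-contained modulo standard functional analysis and yields the clean structural inclusion $\overline{\conv}(A)^{\mathcal U}\subseteq\overline{\conv}(A^{\mathcal U})$ for relatively SWC sets, which is of independent interest; Tu's quantitative route buys more, namely estimates on measures of super weak noncompactness of convex hulls, at the price of importing a quantitative Krein-type theorem.
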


As an immediate application to the only one statement from Proposition \ref{superequival} which offers no additional difficulties we obtain the following characterization in terms of interpolation.

\begin{theo}\label{interpol}
A set $K \subset X$ is super weakly compact if and only if there exists a reflexive Banach space $Z$ and a super weakly compact operator $T: Z \rightarrow X$ such that $K \subset T(B_Z)$.
\end{theo}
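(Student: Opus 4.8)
The plan is to reduce the general (non-convex) statement to the convex equivalence (i)$\Leftrightarrow$(iv) of Proposition \ref{superequival}, using Theorem \ref{KST} as the bridge that lets us pass between a set and its closed convex hull. I would prove the two implications separately, and the heavy lifting is already contained in the results assumed above.

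For the ``only if'' direction, assume $K$ is super weakly compact. In particular $K$ is relatively SWC, so Theorem \ref{KST} guarantees that the closed convex hull $C:=\overline{\conv}(K)$ is super weakly compact. Since super weakly compact sets are bounded, $C$ is a bounded closed convex super weakly compact set, and the implication (i)$\Rightarrow$(iv) of Proposition \ref{superequival} provides a reflexive Banach space $Z$ and a super weakly compact operator $T:Z\rightarrow X$ with $C\subset T(B_Z)$. As $K\subset C$, the same $Z$ and $T$ witness the desired factorization for $K$.

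For the ``if'' direction, suppose $K\subset T(B_Z)$ with $Z$ reflexive and $T:Z\rightarrow X$ super weakly compact. The key observation is an inclusion at the level of ultrapowers: for any free ultrafilter $\mathcal U$, every element of $K^{\mathcal U}$ is represented by a family $(x_i)_i$ with $x_i\in T(B_Z)$, say $x_i=T(z_i)$ with $\|z_i\|\le 1$; the class of $(z_i)_i$ then lies in the unit ball $B_{Z^{\mathcal U}}$ and $T^{\mathcal U}((z_i)_i)=(x_i)_i$. Hence $K^{\mathcal U}\subset T^{\mathcal U}(B_{Z^{\mathcal U}})$. Since $T$ is super weakly compact, $T^{\mathcal U}$ is a weakly compact operator, so $T^{\mathcal U}(B_{Z^{\mathcal U}})$ is relatively weakly compact in $X^{\mathcal U}$, and therefore so is its subset $K^{\mathcal U}$. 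This shows $K$ is relatively SWC, and being weakly closed it is SWC.

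I do not anticipate a genuine obstacle, since the difficult analytic content is entirely absorbed into Theorem \ref{KST} (stability under closed convex hulls) and into the already-established convex case of Proposition \ref{superequival}. The only points requiring care are the verification that super weak compactness forces boundedness, so that Proposition \ref{superequival} legitimately applies to $C$, and the routine but necessary check that the ultrapower inclusion $K^{\mathcal U}\subset T^{\mathcal U}(B_{Z^{\mathcal U}})$ holds with the correct identification of representatives.
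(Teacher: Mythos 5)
Your proposal is correct and takes essentially the same route as the paper, whose entire proof is the one line ``use Theorem \ref{KST} together with statement (iv) of Proposition \ref{superequival}.'' Your write-up simply makes explicit the routine details the paper leaves implicit: that $\overline{\mbox{conv}}(K)$ is bounded, closed and convex so that Proposition \ref{superequival} applies, and, for the converse, the direct ultrapower verification $K^{\mathcal U}\subset T^{\mathcal U}(B_{Z^{\mathcal U}})$ (which is exactly how the implication (iv)$\Rightarrow$(i) of the Proposition works anyway).
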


\begin{proof} Just use Theorem \ref{KST} together with statement $(iv)$ from Proposition \ref{superequival}.
\end{proof}

Theorem \ref{KST} allows us to remove  the difficulties of dealing only with convex sets in relation with super weak compactness. In particular, some previously known properties of SWCC sets which in their definition do not appeal to convexity are inherited by the SWC sets. Let us stress the following ones.

\begin{coro}\label{applicationKST}
Let $K \subset X$ be a SWC set. Then:
\begin{itemize}
\item[(a)] $K$ is uniformly Eberlein;
\item[(b)] $K$ has the Banach-Saks property;
\item[(c)] $K$ is finitely dentable.
\end{itemize}
None of the above implications can be reversed.
\end{coro}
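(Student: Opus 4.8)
All three positive assertions will follow from a single device: pass to the closed convex hull and come back. As $K$ is SWC it is in particular relatively SWC, so Theorem \ref{KST} shows that $C:=\overline{\conv}(K)$ is a super weakly compact \emph{convex} set. For convex sets the three properties are already available: statement $(c)$ is exactly the implication $(i)\Rightarrow(ii)$ of Proposition \ref{superequival}, while the facts that a SWCC set is uniformly Eberlein and has the Banach--Saks property are recorded in \cite{raja,raja2}. It therefore remains to see that each property passes from $C$ to the subset $K\subseteq C$.

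This descent is routine for each item. Since SWC sets are weakly closed (Definition \ref{defsuper}), $(K,w)$ is a closed subset of the uniformly Eberlein compact $(C,w)$, and closed subsets of uniformly Eberlein compacta are again uniformly Eberlein, giving $(a)$. Every sequence in $K$ lies in $C$, hence has a subsequence whose Ces\`aro means converge in norm, giving $(b)$. For $(c)$ I would use the evident monotonicity of the dentability derivation: if $A\subseteq B$ then $[A]'_{\varepsilon}\subseteq[B]'_{\varepsilon}$, because the slice $A\cap H$ sits inside $B\cap H$ and so cannot have larger diameter. Applying this inductively along $K\subseteq C$ yields $[K]^{n}_{\varepsilon}\subseteq[C]^{n}_{\varepsilon}$ for every $n$, whence $Dz(K,\varepsilon)\le Dz(C,\varepsilon)<\infty$.

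For the irreversibility I would produce explicit examples. A single space handles $(a)$ and $(b)$: let $X=\big(\sum_{n}\ell_1^{n}\big)_{\ell_2}$, which is reflexive but not super-reflexive because it contains each $\ell_1^{n}$ isometrically; hence $B_X$ is \emph{not} SWC. On the other hand the formal identity $X\to\ell_2$ (legitimate since $\|\cdot\|_{\ell_2^{n}}\le\|\cdot\|_{\ell_1^{n}}$) is bounded and injective, so $(B_X,w)$ embeds homeomorphically into $(\ell_2,w)$ and is uniformly Eberlein; and a gliding-hump argument against the natural $\ell_2$-decomposition shows that the Ces\`aro means of any normalized block sequence have norm $O(N^{-1/2})$, so $X$ has the Banach--Saks property. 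For $(c)$ one must leave the convex setting, since there finite dentability \emph{is} super weak compactness. Let $(x_n)$ be the unit vector basis of Tsirelson's space $T$ and put $K=\{0\}\cup\{x_n:n\ge2\}$. As $(x_n)$ is normalized, weakly null and $1$-separated (any pair of indices $\ge2$ spans an admissible set), the set $K$ is finitely dentable with $Dz(K,\varepsilon)=2$ for $\varepsilon<1$: each $x_n$ is isolated in $K$ by the biorthogonal slice $\{x:x_n^*(x)>1-\delta\}$, of diameter $0$, so the first derivation leaves only $\{0\}$; and any half-space through $0$ contains cofinitely many of the $1$-separated $x_n$, so $0$ is removed exactly at the second step.

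The one genuinely non-formal point, and the step I would carry out in detail, is that $\overline{\conv}(K)$ fails to be SWC, i.e.\ has the finite tree property. For each height $h$ I would build the dyadic tree whose leaves are the vectors $x_{n}$ with $n$ ranging over $2^{h}$ consecutive integers $\ge 2^{h}$, each internal node being the average of the leaves below it. Every such average is a convex combination of the $x_n$, hence lies in $\overline{\conv}(K)$; and for a node at depth $d$ the two children differ by $2^{-(h-d-1)}u$, where $u$ is a $\pm1$-combination of the $x_n$ supported on an admissible set of size $2^{h-d}$, so Tsirelson's lower $\ell_1$-estimate gives $\|u\|_T\ge\tfrac12\cdot2^{h-d}$ and hence $\|x_{s\frown0}-x_{s\frown1}\|_T\ge\tfrac12\cdot2^{-(h-d-1)}\cdot2^{h-d}=1$. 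These trees are therefore $1$-separated of unbounded height, so $\overline{\conv}(K)$ has the finite tree property and, by Proposition \ref{superequival}, is not super weakly compact; since $K$ is SWC if and only if $\overline{\conv}(K)$ is (Theorems \ref{KST} and \ref{interpol}), $K$ is a finitely dentable set that is not SWC. The uniform Eberlein property of $B_X$ above being immediate from the injection $X\to\ell_2$, this tree estimate is the only place where real work is required.
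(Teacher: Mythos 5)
Your proposal is correct, and for the positive assertions it takes exactly the paper's route: pass to $C=\overline{\conv}(K)$ via Theorem \ref{KST}, invoke the known results for SWCC sets from \cite{raja,raja2} and the equivalence $(i)\Leftrightarrow(ii)$ of Proposition \ref{superequival}, and descend to the subset $K$. The paper leaves the descent implicit; your explicit verifications (closed subsets of uniformly Eberlein compacta are uniformly Eberlein, monotonicity of the slice derivation $[A]'_{\varepsilon}\subseteq[B]'_{\varepsilon}$ for $A\subseteq B$) are exactly the right routine steps. Where you genuinely diverge is in the counterexamples. For $(b)$ the paper merely cites the existence of a non super-reflexive Banach--Saks space \cite[p.~84]{diestel}; your space $\big(\sum_n\ell_1^n\big)_{\ell_2}$ is essentially that classical example, and you add the injection into $\ell_2$ to settle $(a)$ explicitly, a point the paper's proof does not even address. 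For $(c)$ the paper uses its own Proposition \ref{exem}: a family of characteristic functions of intervals in $c_0$, weakly compact and finitely dentable with $[K]'_{\varepsilon}=\{0\}$, but containing the James-type configurations of Proposition \ref{SWC1} with $\theta=1$; your alternative is $K=\{0\}\cup\{x_n:n\ge 2\}$ for the Tsirelson basis, and your verification is sound: the biorthogonal slices isolate each $x_n$, weak nullity keeps $0$ until the second derivation, and the lower $\ell_1$-estimate on admissible sets (the singleton family over $2^{h-d}$ indices all $\geq 2^h$ is admissible) gives the computation $\|x_{s\frown 0}-x_{s\frown 1}\|_T\geq 2^{-(h-d-1)}\cdot\tfrac12\cdot 2^{h-d}=1$, so $\overline{\conv}(K)$ has the finite tree property and fails to be SWC by Proposition \ref{superequival} together with Theorem \ref{KST}. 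The two examples exhibit the same phenomenon (a weakly compact, finitely dentable set with non-finitely-dentable convex hull); the paper's $c_0$ example requires no lower norm estimates and additionally yields the trace-counting property $|\{F\cap A\}|=|A|+1$ exploited in Proposition \ref{c_0subsets}, while your Tsirelson example has the mild virtue of direct $1$-separated trees built from a single weakly null basis, at the cost of invoking Tsirelson's norm.
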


\begin{proof}For SWCC, $(a)$ was established in \cite{raja} and $(b)$ in \cite[Theorem 1.3]{raja2}. Finally, remember that $(c)$ is equivalent to super weak compactness in the setting of convex sets.

There exists non super-reflexive spaces with the Banach-Saks property \cite[p. 84]{diestel}. In \cite[Example 4.9]{raja} an example of a finitely dentable weakly compact set whose closed convex hull is not finitely dentable is provided, but this example is not separable and the argument quite indirect. See Proposition \ref{exem} in this paper for a simpler example.
\end{proof}

To conclude this section, we mention that we can also remove convexity from Theorem 1.6 and Theorem 1.9 in \cite{raja2}. A Banach space $X$ is generated by a subset $K$ if the linear span of $K$ is dense in $X$. Then we will say that $X$ is \emph{super weakly compactly generated} (super WCG) if it is generated by a SWC set. A Banach space $X$ is said to be \emph{strongly generated} by a subset $K$ of $X$ if for any weakly compact subset $H$ of $X$ and any $\varepsilon>0$ there is an $n$ in ${\Bbb N}$ such that $H \subset nK+\varepsilon B_X$. Then we say that $X$ is \emph{strongly super weakly compactly generated} (S$^2$WCG) if it is strongly generated by a SWC set. It follows from Theorem \ref{KST} that these definitions coincide with the definitions given in \cite{raja2}. Let us just recall their links with renorming properties. We start with a definition. Given a bounded subset $H$ of $X$, the norm of the Banach space $(X,\| \cdot \|)$ is said to be $H$-UG smooth if
\[ \sup\{ \|x+th\|+\|x-th\|-2: x \in S_X, h \in H\} =o(|t|) \mbox{~when~}
t \rightarrow 0 . \]
The norm is said to be \emph{strongly UG smooth} if it is $H$-UG smooth for some bounded and linearly dense subset $H$  of $X$. We now reformulate the results from \cite{raja2}.

\begin{theo}\label{main_renorm}
Let $X$ be a Banach space. Then:
\begin{itemize}
\item[(a)] $X$ is generated by a SWC set (i.e. is a super WCG Banach space) if and only if it  admits an equivalent strongly UG smooth norm.
\item[(b)]
If $X$ is strongly generated by a SWC set (i.e. is a S$^{\,2}$WCG Banach space), then there is an equivalent norm on $X$ such that its restriction to any reflexive subspace of $X$ is both uniformly convex and uniformly
Fr\'echet-smooth.
\end{itemize}
\end{theo}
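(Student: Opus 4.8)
The plan is to reduce both statements to their convex counterparts, Theorems~1.6 and~1.9 of \cite{raja2}, the point being that the SWC-based definitions of super WCG and S$^2$WCG used here agree with the SWCC-based ones used there. The entire content beyond \cite{raja2} is carried by Theorem~\ref{KST}, so my task is only to set up the reduction carefully and then quote the convex results.

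First I would record the trivial inclusion: a bounded closed convex SWC set is in particular an SWC set, so generation (resp. strong generation) by an SWCC set trivially implies generation (resp. strong generation) by an SWC set. For the reverse passage, suppose $K$ is an SWC set generating $X$. Since $K^{\mathcal U}$ is relatively weakly compact it is bounded, hence $K$ is bounded; and $K$ being SWC is in particular relatively SWC, so Theorem~\ref{KST} applies to give that $C:=\overline{\conv}(K)$ is SWC. As $C$ is norm-closed, bounded and convex, it is an SWCC set, and since $K\subset C$ its linear span is dense, so $C$ generates $X$. (If a symmetric generator is preferred, one replaces $K$ by $K\cup(-K)$, which is again relatively SWC by symmetry and by stability of relative super weak compactness under finite unions.) Thus $X$ is super WCG in the sense of \cite{raja2}, and \cite[Theorem~1.6]{raja2} then furnishes the equivalence with strong UG smoothness claimed in~(a).

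For~(b) the same convex set $C$ works in the one direction that is asserted: because $K\subset C$ we have $nK+\eps B_X\subset nC+\eps B_X$ for every $n$ and every $\eps>0$, so strong generation by the SWC set $K$ forces strong generation by the SWCC set $C$. Hence $X$ is S$^2$WCG in the sense of \cite{raja2}, and the renorming with the stated uniform convexity and uniform Fr\'echet smoothness on reflexive subspaces is exactly the conclusion of \cite[Theorem~1.9]{raja2}.

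No deep obstacle arises here: the substance lives in Theorem~\ref{KST} and in the renorming theorems of \cite{raja2}. The only points requiring genuine care are the routine verifications in the reduction---that an SWC set is automatically bounded, that $K$ (or $K\cup(-K)$) is legitimately relatively SWC so that Theorem~\ref{KST} may be invoked, and that $\overline{\conv}(K)$ meets the closed, bounded, convex hypotheses of the convex results. These are precisely the places where the equivalence of the two families of definitions could silently break, so I would state them explicitly before appealing to \cite{raja2}.
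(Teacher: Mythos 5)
Your proposal is correct and follows exactly the paper's route: the paper likewise obtains Theorem~\ref{main_renorm} by observing that, thanks to Theorem~\ref{KST}, the SWC-based notions of super WCG and S$^2$WCG coincide with the SWCC-based ones of \cite{raja2}, and then quoting Theorems~1.6 and~1.9 there. Your reduction simply spells out the routine verifications (boundedness of SWC sets, symmetrization, passage of strong generation to the closed convex hull) that the paper leaves implicit.
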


\section{Application of James sequences}\label{James}

For a non super-reflexive Banach space $X$, the dual of the ultrapower $X^{\mathcal U}$ is strictly larger than $(X^*)^{\mathcal U}$ and so it contains unknown or non representable elements making the study of weak compactness difficult. For that reason, the following purely intrinsic  characterization of weak compactness due to James \cite{James2}, is extremely useful for the characterization of SWC sets.

\begin{theo}[James]
A subset $C$ of $X$ is not relatively weakly compact if and only if there exists $\theta>0$ and a sequence $(x_n)_{n=0}^\infty \subset C$ such that for every $k \in {\Bbb N}$
$$ \mbox{dist}\big(\mbox{conv}\{x_j: j \leq k \}, \mbox{conv}\{x_j: j > k \} \big) \geq \theta.$$
\end{theo}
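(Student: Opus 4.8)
The plan is to prove the two implications separately, handling the sufficiency of the separation condition first (it is short), and then constructing the required sequence by a James-type interlacing argument for the converse. Throughout I assume, as is implicit in the statement, that $C$ is bounded, so that its weak$^*$ closure $\overline{C}^{w^*}$ in $X^{**}$ is weak$^*$-compact.

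First I would prove that the separation condition forces $C$ to be non-relatively weakly compact. Suppose $\theta>0$ and $(x_n)_{n\ge 0}\subset C$ witness the condition, and suppose for contradiction that $C$ is relatively weakly compact. Then $(x_n)$ lies in the weakly compact set $\overline{C}^w$ and hence has a weak cluster point $x\in X$. Deleting finitely many terms does not change cluster points, so $x$ lies in the weak closure of every tail $\{x_j:j>k\}$; by Mazur's theorem the weak closure of the convex set $\mbox{conv}\{x_j:j>k\}$ equals its norm closure, whence $x\in\overline{\mbox{conv}}\{x_j:j>k\}$ for every $k$. On the other hand $x$ belongs to the norm closure of $\mbox{conv}\{x_j:j\ge 0\}$, so I may pick a finite convex combination $y$ of the $x_j$ with $\|x-y\|<\theta$; letting $k$ be the largest index occurring in $y$, we have $y\in\mbox{conv}\{x_j:j\le k\}$ while $x\in\overline{\mbox{conv}}\{x_j:j>k\}$, so $\|x-y\|\ge\theta$, a contradiction.

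For the converse, assume $C$ is not relatively weakly compact. The basic tool is the elementary bidual characterization: a bounded set is relatively weakly compact if and only if $\overline{C}^{w^*}\subseteq X$, since the weak$^*$ topology of $X^{**}$ restricts on $X$ to the weak topology. Hence I can fix $x^{**}\in\overline{C}^{w^*}\setminus X$ and set $d:=\mbox{dist}(x^{**},X)>0$. I would then construct inductively points $x_n\in C$ and functionals $x_n^*\in B_{X^*}$ so that each $x_n^*$ nearly annihilates the previously chosen points while nearly attaining $d$ at $x^{**}$, and each new point nearly realizes the values $x^{**}(x_i^*)$ of the earlier functionals. Concretely, put $\eta:=d/8$. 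Given $x_0,\dots,x_n\in C$, the point $x^{**}$ is at distance $\ge d$ from $E:=\mbox{span}\{x_0,\dots,x_n\}\subset X$, so Hahn--Banach yields $G\in B_{X^{***}}$ with $G|_E=0$ and $G(x^{**})=d$; since $B_{X^*}$ is weak$^*$-dense in $B_{X^{***}}$ by Goldstine's theorem, I can choose $x_n^*\in B_{X^*}$ with $x^{**}(x_n^*)>d-\eta$ and $|x_n^*(x_j)|<\eta$ for $j\le n$. Using $x^{**}\in\overline{C}^{w^*}$, I then pick $x_{n+1}\in C$ with $|x_i^*(x_{n+1})-x^{**}(x_i^*)|<\eta$ for all $i\le n$. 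The resulting pattern is $|x_i^*(x_j)|<\eta$ for $j\le i$ and $x_i^*(x_m)>d-2\eta$ for $m>i$.

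Finally I would read off the separation. Fix $k$ and test any $u\in\mbox{conv}\{x_j:j\le k\}$ and $v\in\mbox{conv}\{x_j:j>k\}$ against $x_k^*\in B_{X^*}$: the estimates give $|x_k^*(u)|<\eta$ and $x_k^*(v)>d-2\eta$, so $\|v-u\|\ge x_k^*(v-u)>d-3\eta=5d/8>d/2$, uniformly in $u,v$. Thus $\theta:=d/2$ works for every $k$. The main obstacle is this converse direction, and within it the delicate point is producing separating functionals that genuinely live in $X^*$ rather than merely in $X^{***}$ — this is exactly where Goldstine's theorem enters — together with organizing the alternation of the two approximation steps (points approximating $x^{**}$ in the weak$^*$ sense, functionals approximating the Hahn--Banach separator) so that a single uniform gap $\theta$ survives all the accumulated errors.
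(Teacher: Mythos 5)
Your proof is correct, but note that the paper itself offers no proof of this statement: it is quoted as James's theorem with a citation to \cite{James2}, so there is nothing internal to compare against, and what you supply is the classical self-contained argument. Both directions check out. The easy direction (cluster point plus Mazur) is fine, and in the hard direction the bookkeeping works: with $\eta=d/8$ your construction gives $|x_i^*(x_j)|<\eta$ for $j\le i$ and $x_i^*(x_m)>d-2\eta$ for $m>i$, whence testing against $x_k^*$ separates the two convex hulls by $d-3\eta=5d/8$, uniformly in $k$. Two small remarks. First, boundedness is not among the hypotheses of the statement as printed; your blanket assumption leaves the unbounded case formally unhandled, though it is trivial to dispatch: if $C$ is unbounded then by uniform boundedness some $f\in X^*$ is unbounded on $C$, and picking $x_n\in C$ with $f(x_{n+1})\ge f(x_n)+1$ gives the separation with $\theta=1/\|f\|$. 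Second, Hahn--Banach yields $G(x^{**})=\mathrm{dist}(x^{**},E)\ge d$ rather than $=d$, which only strengthens your estimates. It is worth observing what your route buys beyond the bare statement: your construction produces not just the points but the functionals $(x_n^*)\subset B_{X^*}$ with the triangular pattern of values, i.e.\ an approximate form of the sharper functional version recorded later as Theorem \ref{james_seq}, with $\theta$ quantitatively comparable to $\mathrm{dist}(x^{**},X)$; this is precisely the mechanism the paper invokes (``the proof of James' theorem'') in Theorem \ref{SWCcodim}, where the distance estimate of Proposition \ref{Jamescodim} is converted into James sequences inside finite-codimensional sections. The only gap between your output and the exact statement of Theorem \ref{james_seq} is the normalization $x_n^*(x_k)=0$ and $x_n^*(x_k)=\theta$ exactly, which requires an additional argument due to James and is not needed for the distance separation proved here.
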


The straightforward application of the above statement to the definition of super weak compactness with ultrapowers leads to the following characterization \cite[Corollary 4.9]{Cheng3} of SWC sets, which actually provides a ``measure of non super weak compactness''.

\begin{prop}\label{SWC1}
For a closed subset $C$ of $X$ the following statements are equivalent:
\begin{itemize}
\item[(i)] $C$ is not relatively SWC.
\item[(ii)] There exists $\theta>0$ such that for every $n$ in ${\Bbb N}$ it is possible to find points $(x_k)_{k=1}^n$ in $C$ such that for every $1 \leq k < n$,
$$ \mbox{dist}\big(\mbox{conv}\{x_j: j \leq k \}, \mbox{conv}\{x_j: j > k \} \big) \geq \theta.$$
\end{itemize}
\end{prop}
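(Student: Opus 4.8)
The plan is to derive the finitary characterization in Proposition \ref{SWC1} directly from James's theorem applied to the ultrapower, using the combinatorial transfer principle of ultrafilters. First I would fix a free ultrafilter $\mathcal U$ on $\mathbb N$ and recall that, by Definition \ref{defsuper}, $C$ fails to be relatively SWC exactly when $C^{\mathcal U}$ is not relatively weakly compact in $X^{\mathcal U}$. To that set I would apply James's theorem verbatim: there exist $\theta>0$ and a sequence $(\xi_n)_{n=0}^\infty \subset C^{\mathcal U}$ with
\[
\mbox{dist}\big(\mbox{conv}\{\xi_j: j\le k\},\,\mbox{conv}\{\xi_j: j>k\}\big)\ge \theta
\quad\text{for every }k.
\]
Each $\xi_n$ is the class of a bounded family $(x_n^i)_{i}$ with $x_n^i\in C$, and the norm in $X^{\mathcal U}$ is computed as $\lim_{i,\mathcal U}$ of the norms of representatives.

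The core of the argument is to transfer, for each fixed finite stage $n$, the separation of the first $n$ ultrapower vectors down to separation of genuine points of $C$. Fix $n$ and consider the $n$ vectors $\xi_1,\dots,\xi_n$. The distance condition for $1\le k<n$ is an infimum over finitely many convex combinations, so for a prescribed small error $\eta>0$ one can choose finitely many rational convex coefficient vectors that realize each of these finitely many distances up to $\eta$. For such a fixed finite collection of convex combinations, the norm of each relevant difference $\sum_{j\le k}\lambda_j\xi_j-\sum_{j>k}\mu_j\xi_j$ equals $\lim_{i,\mathcal U}\|\sum_{j\le k}\lambda_j x_j^i-\sum_{j>k}\mu_j x_j^i\|$. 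Since there are only finitely many coefficient vectors and finitely many indices $k$ in play, the set of coordinates $i$ at which all these finitely many norms are simultaneously within $\eta$ of their $\mathcal U$-limits belongs to $\mathcal U$, hence is nonempty. Picking one such $i$ and setting $x_k:=x_k^i$ for $1\le k\le n$ yields points of $C$ for which
\[
\mbox{dist}\big(\mbox{conv}\{x_j: j\le k\},\,\mbox{conv}\{x_j: j>k\}\big)\ge \theta-2\eta,
\quad 1\le k<n,
\]
and choosing $\eta$ small gives the required $\theta/2$-separation (or any constant strictly below $\theta$) uniformly in $n$. This proves (i)$\Rightarrow$(ii). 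For the converse (ii)$\Rightarrow$(i), I would run the construction in reverse: given for each $n$ a finite separated configuration $(x_k)_{k=1}^n$ in $C$, I assemble these into families indexed by $i\in\mathbb N$ and pass to the ultrapower, where the $\mathcal U$-limit preserves the separation constant, producing a James sequence in $C^{\mathcal U}$ and hence non relative weak compactness of $C^{\mathcal U}$.

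The main obstacle I anticipate is handling the passage from the infimum defining the distance to a finite, uniformly controllable set of convex combinations: a priori the infimum ranges over infinitely many convex combinations, so I must argue that only finitely many coefficient vectors need be controlled to preserve the separation up to $\eta$, and that this can be done uniformly across the finitely many values of $k$ at each fixed stage $n$. This is where a compactness argument on the simplices of convex coefficients, together with the boundedness of the families $(x_n^i)_i$, is essential; once that finiteness is secured, the intersection-in-$\mathcal U$ step is routine because a finite intersection of members of $\mathcal U$ is again a member of $\mathcal U$. A secondary subtlety is that in the converse direction one should be careful that the diagonal family built from the finite configurations is bounded and lies in $C$ coordinatewise, which holds since each $x_k$ belongs to $C$ and $C$ is bounded by relative weak compactness considerations; this guarantees the assembled class lies in $C^{\mathcal U}$.
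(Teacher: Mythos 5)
Your proof is correct and takes essentially the same route the paper intends: the paper gives no proof of Proposition \ref{SWC1}, presenting it as the ``straightforward application'' of James's theorem to the ultrapower definition (citing \cite[Corollary 4.9]{Cheng3}), which is exactly your argument of producing a James sequence in $C^{\mathcal U}$ and transferring finitely many separation conditions down via the ultrafilter, and conversely assembling finite configurations into $C^{\mathcal U}$. One remark: your opening claim that the distance is ``an infimum over finitely many convex combinations'' is inaccurate as stated (an upper approximation of the infimum would not yield the needed lower bound downstairs), but your final paragraph correctly supersedes it with the compactness/$\eta$-net argument on the coefficient simplices combined with the uniform bound on representatives, which is precisely the step that makes the transfer legitimate.
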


Note that this can be used in the construction of arbitrary large dyadic trees inside a non SWCC set (see implication $(iii) \Rightarrow (i)$ in Proposition \ref{superequival}). Moreover, the trees obtained that way have the following  stronger separation property.

\begin{coro}
Let $K$ be closed convex non SWC. Then there exists $\delta>0$ such that for every $n$ in ${\Bbb N}$ there is a $\delta$-separated dyadic tree of height $n$ in $K$ such that the distance between nodes of consecutive levels is also at least $\delta$.
\end{coro}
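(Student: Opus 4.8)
The plan is to combine Proposition \ref{SWC1} with a standard block-averaging construction. First I would note that, since $K$ is closed and convex it is weakly closed, so the failure of super weak compactness means exactly that $K$ is not relatively SWC. Hence Proposition \ref{SWC1} applies and furnishes a constant $\theta>0$ with the following property: for every $m\in\mathbb{N}$ there are points $y_1,\dots,y_m\in K$ such that
\[
\mbox{dist}\big(\mbox{conv}\{y_j:j\le p\},\ \mbox{conv}\{y_j:j>p\}\big)\ge\theta \qquad (1\le p<m).
\]

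Given a target height $n$, I would apply this with $m=2^n$ to obtain $y_1,\dots,y_{2^n}\in K$. To each node $s$ of the dyadic tree with $|s|=j\le n$ I associate the dyadic subinterval $I_s\subseteq\{1,\dots,2^n\}$ of length $2^{n-j}$ consisting of the indices of the leaves lying below $s$, arranged so that $I_s$ splits into its lower half $I_{s\frown 0}$ and its upper half $I_{s\frown 1}$, each of length $2^{n-j-1}$. Then I would set $x_s$ to be the average $2^{-(n-j)}\sum_{i\in I_s}y_i$. Since each $x_s$ is a convex combination of points of $K$, it lies in $K$; and since the average over $I_s$ equals the average of the averages over its two halves, one has $x_s=\tfrac12(x_{s\frown 0}+x_{s\frown 1})$ for every $|s|<n$. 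Thus $\{x_s:|s|\le n\}$ is a genuine dyadic tree of height $n$ contained in $K$.

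For the sibling separation I would observe that, letting $p$ denote the splitting index of $I_s$ (so that every element of $I_{s\frown 0}$ is $\le p$, every element of $I_{s\frown 1}$ is $>p$, and $1\le p<2^n$), we have $x_{s\frown 0}\in\mbox{conv}\{y_i:i\le p\}$ and $x_{s\frown 1}\in\mbox{conv}\{y_i:i>p\}$. The displayed James inequality then yields $\|x_{s\frown 0}-x_{s\frown 1}\|\ge\theta$ for every $|s|<n$, so the tree is $\theta$-separated. The control between consecutive levels comes for free from the midpoint structure: since $x_s=\tfrac12(x_{s\frown 0}+x_{s\frown 1})$, we get $\|x_s-x_{s\frown i}\|=\tfrac12\|x_{s\frown 0}-x_{s\frown 1}\|\ge\theta/2$ for $i\in\{0,1\}$. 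Taking $\delta=\theta/2$ then makes both the sibling distances ($\ge\theta\ge\delta$) and the parent--child distances ($\ge\delta$) at least $\delta$, which proves the claim.

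I do not expect any genuine obstacle here: the only points requiring care are the bookkeeping of the dyadic intervals $I_s$---in particular checking that every split index $p$ lies in the admissible range $\{1,\dots,2^n-1\}$ so that Proposition \ref{SWC1} applies to it, and that the containments in the two convex sub-hulls hold---together with the elementary remark that the distance from a node to either of its children is exactly half the distance between the two children, so that the sibling separation produced by James' inequality automatically forces the consecutive-level separation.
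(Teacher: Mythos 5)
Your construction is the one the paper uses (averaging the $2^n$ James points supplied by Proposition \ref{SWC1} over the dyadic partitions of $\{1,\dots,2^n\}$), and everything you check is checked correctly: the midpoint identity, the sibling bound $\|x_{s\frown 0}-x_{s\frown 1}\|\ge\theta$ via the split index, and the parent--child bound $\theta/2$. However, there is a genuine gap in your treatment of the clause ``the distance between nodes of consecutive levels is also at least $\delta$'': you read it as concerning a node and \emph{its own} children, whereas it concerns \emph{any} pair of nodes lying at consecutive levels. The paper's own proof makes this explicit by splitting into two cases according to whether the deeper node is or is not a descendant of the shallower one; the second case (say, a node $x_s$ at level $j$ and a node $x_{t\frown i}$ at level $j+1$ with $t\ne s$, $|t|=j$ --- for instance a node and its sibling's children) is simply not addressed by your remark that the consecutive-level control ``comes for free from the midpoint structure''. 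That remark only yields the parent--child estimate. The stronger reading is also the one needed downstream, in the proof of Corollary \ref{notappr}, where a slice of small oscillation must be shown not to contain \emph{any} two nodes of consecutive levels of the same tree.

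The good news is that the missing case follows immediately from the interval bookkeeping you already set up: if $|t|=|s|=j$ and $t\ne s$, then $I_s$ and $I_t$ are distinct dyadic intervals of the same length, so one lies entirely to the left of the other; since $I_{t\frown i}\subseteq I_t$, there is a split index $p$ with $I_s\subseteq\{1,\dots,p\}$ and $I_{t\frown i}\subseteq\{p+1,\dots,2^n\}$ (or vice versa), and Proposition \ref{SWC1} then gives $\|x_s-x_{t\frown i}\|\ge\theta$, in fact better than the parent--child bound. With this one additional case your argument coincides with the paper's: all consecutive-level pairs are $\theta$-separated except parent--child pairs, which are $\theta/2$-separated, so $\delta=\theta/2$ works, exactly as you concluded.
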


\begin{proof}
Take $2^n$ points in $K$ fulfilling statement $(ii)$ of Proposition \ref{SWC1} and build a dyadic tree by averaging on dyadic partitions of $\{1,2,\dots,2^n\}$, see \cite[p. 412]{BL} for instance. Then this tree is clearly $\theta$-separated in the sense of the definition of the finite tree property. Let now $x,y$ be two nodes in consecutive levels, and assume $y$ is one level above $x$. If $x$ is not an ancestor of $y$, then both are convex combinations of the original points in such a way that $\|x-y\| \geq \theta$. If $x$ is an ancestor of $y$, then $\|x-y\| \geq \frac{\theta}{2}$. Therefore $\delta=\frac{\theta}{2}$ does the work.
\end{proof}

Then, the strong separation of trees provides non finitely dentable Lipschitz functions, thanks to an argument of Cepedello-Boiso \cite{cepe}. Recall that if $C$ is a non empty bounded subset of a Banach space $X$ and $f$ is a map from $C$ to a metric space $M$. Then $f$ is said to be \emph{finitely dentable} if for any $\eps>0$, the iteration of the derivation defined by
$$C'_\eps=\{x\in C,\ {\rm diam} f(C\cap H)>\eps\ \text{for any}\ H\in {\Bbb H}\ \text{s.t.}\ x\in H\}$$
exhausts the set $C$ in finitely many steps.

\begin{coro}\label{notappr}
Let $K$ be a closed convex non SWC set. Then there exists a non finitely dentable Lipschitz function defined on $K$.
\end{coro}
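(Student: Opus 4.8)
The plan is to convert the trees furnished by the previous corollary into a single real-valued Lipschitz function on $K$ whose associated derivation is slow, controlling its speed by the height of the trees. Since $K$ is closed convex and non SWC, Proposition~\ref{superequival} (the equivalence $(i)\Leftrightarrow(iii)$) shows that $K$ has the finite tree property, and the corollary above produces $\delta>0$ together with, for every $n\in\N$, a $\delta$-separated dyadic tree $\{x_s:|s|\le n\}\subset K$ whose nodes in consecutive levels are also at least $\delta$ apart. Fixing a suitable $\eps_0>0$ proportional to $\delta$, I would prove that for each such tree the $k$-th iterate of the $\eps_0$-derivation still contains every node of level $\le n-k$; since $n$ is arbitrary, no finite number of steps exhausts $K$ and $f$ is not finitely dentable.

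First I would isolate the combinatorial engine. If $x_s$ is a node of the current derived set and $H\in{\Bbb H}$ is a slice containing it, then writing $x_s=\tfrac12(x_{s\frown 0}+x_{s\frown 1})$ and applying the functional defining $H$ shows that at least one child lies in $H$; iterating, $H$ contains an entire branch $x_s,\,x_{s\frown i_1},\,x_{s\frown i_1 i_2},\dots$ running down to the deepest level still present. Thus every slice through a high node automatically captures a deep branch of consecutive-level nodes. It therefore suffices to build a bounded Lipschitz $f:K\to\R$ that oscillates by more than $\eps_0$ between consecutive levels along every branch, i.e. $|f(x_s)-f(x_{s\frown i})|>\eps_0$ across every parent--child edge: then $\mathrm{diam}\,f(C\cap H)>\eps_0$ for each such slice, and an induction on $k$ (each step losing at most one level, exactly as in the set-dentability argument) shows the root survives $n$ derivations.

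The \emph{main obstacle} is precisely the construction of such an $f$, and this is where I would invoke the argument of Cepedello-Boiso~\cite{cepe}. The naive candidates fail: the identity map is vector-valued, while a scalar ``level parity'' function $x_s\mapsto\eps_0(|s|\bmod 2)$ — which does oscillate by $\eps_0$ across every parent--child edge — need not be Lipschitz on $K$, since two nodes of different parity that are far apart in the tree may be metrically close, forcing an unbounded local Lipschitz constant. The content of Cepedello-Boiso's technique is to realize the required per-level oscillation by a genuinely Lipschitz function, built from the functionals that separate the consecutive levels of the tree (equivalently, the James-type functionals underlying the previous corollary and Proposition~\ref{SWC1}), so that along the branch selected by any slice the oscillation is read off from controlled functional values rather than from an ill-behaved level function.

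Finally, the passage from individual trees to a single function on $K$ is itself part of the Cepedello-Boiso scheme: because all the trees are $\delta$-separated for the same $\delta$, the construction yields one bounded Lipschitz $f:K\to\R$ whose $\eps_0$-derivation admits, for every $n$, a surviving point (the root of the height-$n$ tree) after $n$ steps, so the iterated derivation never exhausts $K$. I expect the two points requiring genuine care to be the uniform Lipschitz estimate for the assembled function and the lower bound on the $f$-oscillation of every slice meeting the deep part of a tree; the latter, obtained from the separating functionals rather than from a discontinuous level function, is the heart of the argument.
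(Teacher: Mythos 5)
Your combinatorial skeleton is the right one and matches the paper's: slices through a node capture a full descending branch (by midpoint convexity), so it suffices to build one bounded Lipschitz $f$ oscillating by a fixed $\eps_0$ across every parent--child edge, and then each derivation step can only strip the bottom level of a tree, so trees of arbitrary height defeat finite dentability. But the heart of the proof --- the construction of $f$ --- is never carried out, and the mechanism you guess for it is not the one that works. You correctly reject the parity function $x_s\mapsto \eps_0(|s|\bmod 2)$ as non-Lipschitz, but then simply assert that Cepedello-Boiso's technique produces the required $f$ ``from the James-type functionals that separate consecutive levels''. Nothing in the James data yields a single scalar function with alternating oscillation along every branch and a Lipschitz constant independent of the height; a sum or supremum of the functionals $x_k^*$ does not have this behaviour, and you give no recipe. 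The device that actually works (in the paper, and it is metric rather than functional-analytic) is: let $O_n$ be the set of nodes at \emph{odd} levels of the tree $T_n$, put $F=\bigcup_n O_n$, and take $f(x)=d(x,F)$. This is automatically $1$-Lipschitz, vanishes at odd nodes, and is bounded below at even nodes exactly by the separation hypotheses --- i.e., the non-Lipschitz parity function is repaired by replacing parity with distance-to-odd-levels, which is precisely the step your proposal leaves blank.

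There is a second genuine gap in your ``passage from individual trees to a single function''. The preceding corollary gives, for each $n$, \emph{some} $\delta$-separated tree in $K$, with no control whatsoever on the mutual position of different trees: an even-level node of $T_n$ may lie arbitrarily close to odd-level nodes of $T_m$ for $m\neq n$, and then no single $f$ can have the claimed edge-oscillation (with the distance function, $f$ would be tiny at such even nodes). Your justification ``because all the trees are $\delta$-separated for the same $\delta$'' conflates the internal separation of each tree with separation \emph{between} trees. The paper handles this before anything else: since $K$ is non-SWC it is not norm compact, hence contains a uniformly separated sequence; contracting $K$ at those points produces inside $K$ a uniformly separated sequence $(K_n)$ of translates of $\lambda K$, each still non-SWC, and the height-$n$ tree $T_n$ is then placed inside $K_n$. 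This placement is what makes $F$ closed and guarantees the lower bound for $f$ at even nodes. With the function unconstructed and the trees unseparated, your argument does not close.
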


\begin{proof} As $K$ is not norm compact, it contains a uniformly separated sequence. Performing contractions at all those points we deduce that $K$ contains a uniformly separated  sequence $(K_n)$ of translations of $\lambda K$ for some $\lambda \in (0,1)$. So, there exists $\delta >0$ such that for any $n$ in $\N$, there exists a $\delta$-separated dyadic tree $T_n$ in $K_n$ with nodes of consecutive levels that are also $\delta$-separated. We may as well assume that the sequence $(K_n)$ is also  $\delta$-separated. For $n$ in $\N$, denote now $O_n$ the set of nodes at odd levels of $T_n$ and consider the closed subset $F=\bigcup_{n=1}^\infty O_n$ of $K$ and the function defined by $f(x)=d(F,x)$, which is $1$-Lipschitz. Note that any slice $S$ where the oscillation of $f$ is less than $\delta$ cannot contain points of consecutive levels of the same $T_n$. On the other hand, the derivation process cannot remove a node of the tree $T_n$ before all its descendants. Both things imply that $f$ cannot be finitely dentable.
\end{proof}

The next result adds up to the characterizations of SWCC sets  in Proposition \ref{superequival}.

\begin{theo}
A closed convex set is SWCC if and only if every real Lipschitz (or uniformly continuous) function defined on it can be uniformly approached by differences of convex Lipschitz functions.
\end{theo}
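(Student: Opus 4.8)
The plan is to establish this as the set-theoretic, ``local'' analogue of Cepedello-Boiso's characterization of super-reflexivity \cite{cepe}, with super weak compactness of $K$ playing the role of super-reflexivity of the ambient space. A function that is a uniform limit of differences of convex Lipschitz functions we call \emph{DC-approximable}. First I would dispose of the uniformly continuous case by reducing it to the Lipschitz one: if $f$ is bounded and uniformly continuous on the bounded set $K$, then the inf-convolutions $f_n(x)=\inf_{y\in K}\big(f(y)+n\|x-y\|\big)$ are $n$-Lipschitz and converge uniformly to $f$ on $K$; since a uniform limit of DC-approximable functions is DC-approximable, it suffices to treat Lipschitz functions. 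The two implications are then proved separately.

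For the necessity of super weak compactness I would argue by contraposition, using Corollary \ref{notappr} as the source of a bad function. If $K$ is not SWCC, that corollary (following \cite{cepe}) produces $\delta>0$, dyadic trees $T_n=\{x_s:|s|\le n\}\subset K$ of every height whose consecutive levels are $\delta$-separated, and a $1$-Lipschitz $f$ that vanishes on one family of levels and stays bounded away from $0$ one level further, so that it oscillates with amplitude bounded below along each $T_n$. The invariant that detects this is the tree convexity-defect
\[ D_n(\varphi)=\sum_{|s|<n}2^{-|s|}\,\Big|\varphi(x_s)-\tfrac12\big(\varphi(x_{s\frown0})+\varphi(x_{s\frown1})\big)\Big| . \]
The oscillation of $f$ forces $D_n(f)\ge c\,n$ for some $c>0$, because each internal node contributes at least $c$. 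On the other hand, if $g=u-v$ is a difference of convex Lipschitz functions, then the dyadic relation $x_s=\tfrac12(x_{s\frown0}+x_{s\frown1})$ turns $u$ and $v$ into submartingales along the tree, so the signed convexity-defects telescope into level averages and $D_n(g)\le\mathrm{osc}_K(u)+\mathrm{osc}_K(v)$ uniformly in $n$. Finally $D_n$ obeys the stability estimate $|D_n(f)-D_n(g)|\le 2\,\|f-g\|_\infty\,n$. Hence a DC function $g$ with $\|f-g\|_\infty<c/2$ would give $(c-2\|f-g\|_\infty)\,n\le D_n(g)\le\mathrm{osc}_K(u)+\mathrm{osc}_K(v)$ for all $n$, which is impossible; so $f$ is not DC-approximable. (Note that mere non-finite-dentability would not suffice, since an affine function is DC but need not be finitely dentable on a non-SWCC set; it is the tree amplitude of $f$, not just its non-dentability, that is used.)

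For the sufficiency I would localize Cepedello-Boiso's regularization scheme to $K$ through Proposition \ref{superequival}. By item (vi) there is an equivalent norm $|\!|\!|\cdot|\!|\!|$ whose square is uniformly convex on $K$, and I would regularize a Lipschitz $f$ by the Moreau-type envelope $f_\lambda(x)=\inf_{y\in K}\big(f(y)+\lambda\,|\!|\!|x-y|\!|\!|^2\big)$. Controlling $0\le f-f_\lambda$ by the Lipschitz constant and the modulus of convexity gives $\|f-f_\lambda\|_\infty\to 0$ as $\lambda\to\infty$, so the only real issue is to show that each $f_\lambda$ is DC on $K$. In an inner-product situation one expands $|\!|\!|x-y|\!|\!|^2$ and writes $f_\lambda=\lambda\,|\!|\!|\cdot|\!|\!|^2+(\text{infimum of affine functions})$, i.e.\ a convex plus a concave function, which is manifestly DC.

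I expect the crux to be exactly this DC decomposition in the absence of an inner product and only on the set $K$. One must replace the exact expansion by the two-sided power-type-$2$ estimates provided jointly by uniform convexity and uniform smoothness of the square norm, and these must be available where the regularization lives, namely on a neighbourhood of $K$. This is where Proposition \ref{superequival} together with the factorization of a super weakly compact operator through a super-reflexive space (Theorem \ref{interpol}) should enter, allowing one to import the missing $2$-smoothness alongside the $2$-convexity and to transport Cepedello-Boiso's estimates to the localized inf-convolution, thereby yielding the required convex-minus-convex splitting of $f_\lambda$ up to a uniformly small error.
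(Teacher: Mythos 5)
Your necessity argument is genuinely different from the paper's and is, in substance, correct. The paper disposes of this direction by pure citation: a uniform limit of differences of bounded continuous convex functions is a \emph{finitely dentable} function (\cite[Theorem 1.4]{raja}), while Corollary \ref{notappr} supplies a Lipschitz function on a non-SWCC set that is not finitely dentable. Your telescoping estimate is a correct, self-contained quantitative substitute: for convex $u$ the defects along an exact dyadic tree are one-signed, the level averages $M_k=2^{-k}\sum_{|s|=k}u(x_s)$ telescope so that $D_n(u-v)\le\mathrm{osc}_K(u)+\mathrm{osc}_K(v)$, and $|D_n(f)-D_n(g)|\le 2n\|f-g\|_\infty$ holds; this is essentially the martingale mechanism hidden behind \cite[Theorem 1.4]{raja}. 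The one real inaccuracy is your input. Corollary \ref{notappr} does not assert what you use: its function $f=d(F,\cdot)$ vanishes on the odd levels, but the bound $D_n(f)\ge cn$ requires $f$ to be bounded below at \emph{every} even node, i.e.\ even nodes must be uniformly separated from odd nodes at \emph{all} level gaps and across the copies $K_n$, whereas the corollary's proof only establishes separation of consecutive levels. This is repairable inside the paper's toolkit: building the trees by dyadic averaging of the points of Theorem \ref{james_sty}, the functionals $x_k^*$ (applied at the endpoints of the dyadic block indexing the deeper node) show that any two distinct nodes are $\theta/4$-separated, whence $f\ge\min(\theta/4,\delta)$ at even nodes; but as written this crucial lower bound on $D_n(f)$ is asserted, not proved.

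The sufficiency direction contains a genuine gap --- one you candidly flag but do not close --- and the repair you propose would fail. The DC-ness of the envelope $f_\lambda(x)=\inf_{y\in K}\big(f(y)+\lambda|\!|\!|x-y|\!|\!|^2\big)$ is exactly the hard point, and ``importing $2$-smoothness alongside $2$-convexity'' is not available: nothing in Proposition \ref{superequival} gives a power-type modulus (the paper notes in Section \ref{UCsets} that no power-type lower bound on moduli of convexity is guaranteed in this local setting), and a norm whose moduli of convexity and of smoothness are both of power type $2$ exists only on spaces isomorphic to a Hilbert space (Kwapie\'n), so no factorization through a super-reflexive space as in Theorem \ref{interpol} can supply both. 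It is also not how the relevant source proceeds: Cepedello-Boiso's scheme \cite{cepe} uses no smoothness at all; it alternates sup- and inf-convolutions with the uniformly convex kernel and controls, purely through the modulus of convexity, the error by which the iterates fail to be differences (in fact series) of convex Lipschitz functions. The paper's proof of this direction is simply the localized version of that scheme, quoted as \cite[Corollary 5.3]{raja}, which yields the approximation directly for all uniformly continuous functions on a SWCC set --- which incidentally makes your (correct, but then unnecessary) inf-convolution reduction of the uniformly continuous case to the Lipschitz case superfluous. In short: your tree-defect proof of necessity stands once the separation lemma is supplied, but the Moreau-envelope expansion must be replaced by the alternating convolution argument of \cite{cepe} and \cite{raja}; a one-step envelope splits as convex-minus-concave only in the inner-product setting.
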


\begin{proof}
This is just a combination of \cite[Corollary 5.3]{raja} where the the approximation by differences of Lipschitz convex functions is proved for uniformly continuous functions defined on SWCC sets, and Corollary \ref{notappr} knowing that a function which is a uniform limit of differences of bounded continuous convex functions is finitely dentable, see \cite[Theorem 1.4]{raja}.
\end{proof}

The characterization of SWC sets given in Proposition \ref{SWC1} can be improved from a combinatorial point of view by reducing the number of points whose convex combinations are separated. Note
that the following characterizes non super weak compactness by the fact that ``cubes'' can be embedded uniformly in a certain fashion.

\begin{theo}\label{cube}
A subset $C$ of $X$ is not relatively super weakly compact if and only if  there exists $\theta>0$ such that for every $n$ in ${\Bbb N}$ it is possible to find a map $f_n:\{0,1\}^n \rightarrow C$ such that
$$ \mbox{dist}\big(\mbox{conv}\{f_n(A_0)\}, \mbox{conv}\{f_n(A_1)\}\big) \geq \theta   $$
for every pair of sets $A_0, A_1 \subset \{0,1\}^n$ of the form
$$A_0=(a_1,a_2,\dots,a_{k-1},0)\times \{0,1\}^{n-k}$$
$$A_1=(a_1,a_2,\dots,a_{k-1},1)\times \{0,1\}^{n-k}$$
with $1<k\leq n$ and $(a_0,\dots,a_{k-1}) \in \{0,1\}^{k-1}$.
\end{theo}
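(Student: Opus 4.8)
The plan is to prove the two implications separately, using Proposition \ref{SWC1} as the bridge to the chain-separation formulation of (relative) non super weak compactness. The only genuinely new ingredient is a dictionary between the nested ``cube splits'' of the statement and the linear order on $\{0,1\}^n$ given by the binary valuation $v(s)=\sum_{i=1}^n s_i\,2^{n-i}$, which maps $\{0,1\}^n$ bijectively onto $\{0,1,\dots,2^n-1\}$. Throughout I may assume $C$ bounded, since otherwise $C$ is trivially not relatively SWC.

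For the direct implication (not relatively SWC $\Rightarrow$ cube condition) I would argue as follows. Since $C$ is not relatively SWC, Proposition \ref{SWC1} furnishes $\theta>0$ such that for $N:=2^n$ there are points $x_1,\dots,x_{2^n}\in C$ with
\[ \mbox{dist}\big(\mbox{conv}\{x_j:j\le l\},\ \mbox{conv}\{x_j:j>l\}\big)\ge\theta \quad (1\le l<2^n). \]
Define $f_n(s):=x_{1+v(s)}$. The key observation is that, for a split at level $k$ (with $1<k\le n$), prefix $(a_1,\dots,a_{k-1})$ and $P:=\sum_{i<k}a_i 2^{n-i}$, one has $v(s)\in[P,\,P+2^{n-k}-1]$ for $s\in A_0$ and $v(s)\in[P+2^{n-k},\,P+2^{n-k+1}-1]$ for $s\in A_1$. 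Hence, with the threshold $l:=P+2^{n-k}\in\{1,\dots,2^n-1\}$, we get $f_n(A_0)\subset\{x_j:j\le l\}$ and $f_n(A_1)\subset\{x_j:j>l\}$; taking convex hulls and invoking the displayed inequality yields the required $\theta$-separation. In other words, the binary order is chosen precisely so that every cube split is a ``cut'' of the James-type sequence.

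The converse is the delicate point, and it is here that the restriction ``$1<k\le n$'' (which discards the root split $k=1$) matters. A single cube carries only \emph{block-local} separations — each split compares the two halves of one subcube — so it cannot directly reproduce the global chain of initial-versus-final separations of Proposition \ref{SWC1}. I would instead convert the cube into a dyadic tree by averaging: to a node $\nu$ of the binary tree on $\{0,1\}^n$ assign $x_\nu:=$ the mean of $\{f_n(s):s\ \text{extends}\ \nu\}$, so that $x_\nu=\tfrac12(x_{\nu\frown0}+x_{\nu\frown1})$ with $x_{\nu\frown0}\in\mbox{conv} f_n(A_0)$ and $x_{\nu\frown1}\in\mbox{conv} f_n(A_1)$ for the split associated with $\nu$. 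For every non-root node this split satisfies $1<k\le n$, whence $\|x_{\nu\frown0}-x_{\nu\frown1}\|\ge\theta$; passing to the subtree hanging below the node $(0)$ discards the root and produces a genuine $\theta$-separated dyadic tree of height $n-1$ inside $\overline{\mbox{conv}}(C)$.

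Letting $n\to\infty$ shows that the bounded closed convex set $\overline{\mbox{conv}}(C)$ has the finite tree property, hence is not super weakly compact by Proposition \ref{superequival} ((iii)$\Rightarrow$(i)). Kun Tu's Theorem \ref{KST} then closes the loop: were $C$ relatively SWC, $\overline{\mbox{conv}}(C)$ would be SWC, a contradiction, so $C$ is not relatively SWC. The main obstacle is exactly this passage from the tree-like information carried by one cube to a statement about $C$ itself: the averaging trick moves us into $\overline{\mbox{conv}}(C)$, where the finite tree property is available, and it is Theorem \ref{KST} that lets us descend back to $C$.
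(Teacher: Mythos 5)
Your proposal is correct, but it reaches the nontrivial implication by a genuinely different route than the paper. The forward direction is the same: the paper simply says ``apply Proposition \ref{SWC1} for $2^n$ points and make the obvious arrangement'', and your binary valuation $v(s)=\sum_i s_i 2^{n-i}$ is exactly that arrangement made explicit (every admissible split is a cut $\{j\le l\}$ versus $\{j>l\}$ of the James-type chain, which you verify carefully). For the converse, however, the paper stays inside the ultrapower: assuming $C$ relatively SWC, it forms the weakly compact set $K=\overline{C^{\mathcal U}}^{w}$, attaches to each finite sequence $s$ the set $F_s$ of branch elements $(f_n(\sigma_{|_n}))_n$, separates these by weakly continuous Urysohn functions $\phi_n$, extracts a minimal weakly closed $H\subset K$ with $\phi(H)=\{0,1\}^{\N}$, and shows $H$ admits no slice of diameter $\le\theta$, contradicting the dentability of weakly compact sets. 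You instead stay in $X$: averaging $f_n$ over the subcube below each node gives exact midpoints $x_\nu=\frac12(x_{\nu\frown 0}+x_{\nu\frown 1})$ with $\|x_{\nu\frown 0}-x_{\nu\frown 1}\|\ge\theta$ whenever $|\nu|\ge 1$, and your handling of the excluded root split $k=1$ (passing to the subtree below the node $(0)$, where every split has $2\le k\le n$) is exactly right, yielding $\theta$-separated dyadic trees of height $n-1$ in $\overline{\mbox{conv}}(C)$; the finite tree property then kills super weak compactness of the hull via Proposition \ref{superequival}, and Theorem \ref{KST} descends to $C$. The trade-off: your argument is shorter, more elementary and purely finite-dimensional in flavour (no ultrafilters, no Urysohn functions, no minimality argument), but it leans on two imported heavy results — Tu's Theorem \ref{KST} and the tree characterization (iii)$\Leftrightarrow$(i) of SWCC sets — whereas the paper's proof is self-contained modulo the classical fact that weakly compact sets are dentable, and in particular does not need Theorem \ref{KST} for this direction at all. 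One shared small looseness: Proposition \ref{SWC1} is stated for closed $C$, so for a general subset one should either pass to $\overline{C}$ and perturb the chosen points back into $C$ at the cost of replacing $\theta$ by $\theta/2$, or observe that relative super weak compactness is unaffected by taking norm closures; your preliminary reduction to bounded $C$ is fine, since unbounded sets are never relatively SWC and, as you note, satisfy the cube condition trivially.
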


\begin{proof}
If $C$ is not relatively super weakly compact, then apply Proposition \ref{SWC1} for $2^n$ points and make the obvious arrangement. Only the reverse implication is actually an improvement, so assume now that $C$ is super weakly compact and, in order to get a contradiction, that there exists $\theta>0$ and maps $f_n:\{0,1\}^n \rightarrow C$ as in our statement. In the sequel we will denote $\{0,1\}^{\omega}$ the set of all infinite sequences in$\{0,1\}$ and $\{0,1\}^{<\omega}$, the set of all such finite sequences. If  $s$ and $t$ are sequences such that $t$ strictly extends $s$, we write $s \prec t$. For $s\in \{0,1\}^{<\omega}$, denote
$$F_s=\big\{(f_n(\sigma_{|_n}))_{n=0}^\infty,\ s\prec \sigma,\ \sigma\in \{0,1\}^\omega\big\},$$
which we view as subset of the weakly compact subset $K = \overline{C^{\mathcal U}}^{w}$ of $X^{\mathcal U}$. Note that it follows from our assumptions on the maps $f_n$ that for a fixed $n\in \N$ and for any $s\neq t \in \{0,1\}^n$, the weak closures of $F_s$ and $F_t$ have an empty intersection. Thus, we can apply Urysohn's Lemma to produce a weakly continuous map $\phi_n:K\to [0,1]$ such that
$$ \forall s\in \{0,1\}^n\ \forall x\in F_s,\ \phi_n(x)=s_n.$$
We now set $\phi=(\phi_n)_{n=1}^\infty$, which is a weakly continuous map from $K$ to $[0,1]^\N$, with the property that
$$\forall \sigma \in \{0,1\}^\omega\ \ \phi((f_n(\sigma_{|_n})_{n=0}^\infty)=\sigma.$$
Consider now the minimal weakly closed subset $H$ of $K$ such that $\phi(H)=\{0,1\}^{\Bbb N}$, whose existence is easily deduced by weak compactness. We will show that $H$ is not  $\theta$-dentable. Indeed, if $V \subset X^{\mathcal U}$ is an open half-space such that $H \cap V \not = \emptyset$ then $\phi(H \setminus V) \not = \{0,1\}^\omega$. Since $\{0,1\}^\omega \setminus \phi(H \setminus V)$ is nonempty and open in $\{0,1\}^\omega$, there exists $n\in \N$ such that $\phi_n(H \cap V)=\{0,1\}$. Since  $H$ is included in the weak closure of $F_\emptyset$, we deduce that $\phi_n(F_\emptyset \cap V)=\{0,1\}$. It follows now, again from the properties of the maps $f_n$, that there are two points in $H \cap V$ at distance not less than $\theta$. This is a contradiction because weakly compact subsets are dentable \cite[Theorem 11.11]{banach}.
\end{proof}

In the case of convex sets, James \cite[Theorem 3]{James2} proved the following more precise  characterization of weak compactness, which this time involves linear functionals.

\begin{theo}\label{james_seq}
Let $C \subset X$ be a closed convex set. Then $C$ is not weakly compact if and only if there exist $\theta>0$, and sequences $(x_n) \subset C$, $(x_n^*) \subset B_{X^*}$ such that $x^*_n(x_k)=0$ if $n > k$ and  $x^*_n(x_k)=\theta$ if
$n \leq k$.
\end{theo}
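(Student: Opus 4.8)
The plan is to establish the two implications separately, carrying out the elementary direction in full and reducing the deep direction to the basic theorem of James stated just above, together with a passage to the bidual.

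\textbf{Sufficiency (the sequences force non-weak-compactness).} Suppose $\theta>0$, $(x_n)\subset C$ and $(x_n^*)\subset B_{X^*}$ satisfy the stated relations, and assume towards a contradiction that $C$ is weakly compact. Since $C$ is closed and convex it is weakly closed, and by the Eberlein-\v{S}mulyan theorem it is weakly sequentially compact, so some subsequence $x_{n_j}$ converges weakly to a point $x\in C$. Fixing $m$ and letting $j\to\infty$, eventually $n_j\ge m$, whence $x_m^*(x_{n_j})=\theta$; thus $x_m^*(x)=\theta$ for every $m$. Now let $x^*\in B_{X^*}$ be a weak$^*$ cluster point of $(x_m^*)$, which exists by Banach--Alaoglu. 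For each fixed $k$ the tail $(x_m^*)_{m>k}$ takes the value $0$ at $x_k$, so $x^*(x_k)=0$ for all $k$, and consequently $x^*(x)=\lim_j x^*(x_{n_j})=0$. On the other hand $x^*(x)$ is a cluster value of the constant sequence $(x_m^*(x))_m=(\theta)_m$, so $x^*(x)=\theta$, contradicting $\theta>0$.

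\textbf{Necessity.} Here I would first discard the trivial case: an unbounded closed convex set is automatically not weakly compact, and the required sequences can be produced directly, so the essential case is $C$ bounded. In that case, since $C$ is not weakly compact its weak$^*$ closure $\overline{C}^{\,w^*}$ in $X^{**}$ is not contained in $X$; pick $x^{**}\in \overline{C}^{\,w^*}\setminus X$, and, since $X$ is norm-closed in $X^{**}$, set $2\delta=\mbox{dist}(x^{**},X)>0$. I would then build $(x_n)$ and $(x_n^*)$ by induction. Having chosen $x_1,\dots,x_{n-1}$, I would select $x_n^*\in B_{X^*}$ vanishing on $\mbox{span}\{x_1,\dots,x_{n-1}\}$ with $x_n^*(x^{**})$ as close as desired to $\mbox{dist}(x^{**},\mbox{span}\{x_1,\dots,x_{n-1}\})$; this is legitimate because the supremum of $x^{**}$ over the unit ball of the annihilator of $\mbox{span}\{x_1,\dots,x_{n-1}\}$ equals precisely that distance, which is $\ge 2\delta$ and non-increasing in $n$, hence convergent. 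Then, using $x^{**}\in\overline{C}^{\,w^*}$, I would pick $x_n\in C$ matching the values $x_j^*(x^{**})$ to within a small error for all $j\le n$. This automatically yields $x_n^*(x_k)=0$ exactly for $k<n$ (the $x_k$ lie in the annihilated span), and makes $x_n^*(x_k)$ approximately a common value for $k\ge n$.

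The main obstacle is precisely this last point. The scheme above delivers the relations $x_n^*(x_k)=\theta$ only up to an error, whereas the statement demands them on the nose with a single constant $\theta$; moreover a point of $C$ satisfying finitely many affine constraints $x_j^*(\cdot)=\theta$ \emph{exactly} need not exist, since $x^{**}$ is only weak$^*$-approximable by $C$. Upgrading the approximate tail values to an exact common value $\theta$ (the limit of the stabilizing distances above), while keeping the head values exactly $0$ and all $x_k$ inside $C$, is the delicate core of the argument and is achieved by a further extraction of weak$^*$ cluster points of the functionals to freeze the values. As the exact form is precisely James's Theorem~3 in \cite{James2}, once the approximate construction is in place I would invoke that result for the sharp equalities.
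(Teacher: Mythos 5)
The first thing to note is that the paper contains no proof of this statement: it is quoted verbatim as \cite[Theorem 3]{James2}, so the ``official'' argument is a citation to James. Your sufficiency direction is correct and complete, and is genuine added value: your cluster-point argument is sound (it is in essence Grothendieck's double-limit obstruction, since $\lim_n\lim_k x_n^*(x_k)=\theta$ while $\lim_k\lim_n x_n^*(x_k)=0$). The problems are both in the necessity direction.

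First, your dispatch of the unbounded case is wrong: for an unbounded closed convex set the required sequences need not exist at all, so they cannot ``be produced directly.'' Take $C=\{te_1:\ t\in\R\}$ a line. Writing $x_k=t_ke_1$ and $a_n=x_n^*(e_1)$, the relations force $t_ka_1=\theta$ for all $k$, so $t_k\equiv t\neq 0$; then $x_2^*(x_1)=ta_2=0$ gives $a_2=0$, contradicting $x_2^*(x_2)=ta_2=\theta$. Thus the equivalence as literally stated fails for unbounded sets; James's theorem assumes $C$ bounded, and the statement must be read that way (consistently with its use elsewhere in the paper, e.g.\ Theorem \ref{SWCcodim}). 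Second, and more seriously, the core of the hard direction is missing, and the patch you gesture at would fail. Your induction yields $x_n^*(x_k)=0$ exactly for $k<n$ but only $x_n^*(x_k)\approx d_n$ for $k\ge n$, with drifting constants $d_n$. ``A further extraction of weak$^*$ cluster points of the functionals'' cannot freeze these values: any weak$^*$ cluster point $x^*$ of $(x_n^*)$ satisfies $x^*(x_k)=0$ for every $k$, because each $x_k$ is annihilated by all $x_n^*$ with $n>k$; so passing to cluster points destroys precisely the $\theta$-values you need, and no subsequence extraction on the $x_n$ turns approximate equalities into exact ones, since (as you yourself observe) a point of $C$ satisfying finitely many affine constraints exactly need not exist. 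The exact equalities are the real content of James's lemma and require a more delicate inductive selection (an exact interpolation device exploiting the freedom to choose $\theta$ strictly below $\mbox{dist}(x^{**},X)$ and to rescale within the annihilator), not compactness of the functionals. As written, your argument closes by invoking \cite[Theorem 3]{James2} itself, which is circular as a proof of the statement --- though it does coincide with what the paper actually does, namely cite James. Net assessment: you have genuinely proved the easy implication, which the paper does not spell out, but the hard implication remains, in your text as in the paper, a citation rather than a proof, and the two bridging claims you add (the unbounded reduction and the cluster-point freezing) are both incorrect.
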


As an application, we get the following analoguous characterization of convex SWC sets.

\begin{theo}\label{james_sty}
Let $C \subset X$ be closed and convex.
The following statements are equivalent:
\begin{itemize}
\item[(i)] $C$ is not SWCC;
\item[(ii)] There exists $\theta>0$ such that for every $n \in {\Bbb N}$ there exist $(x_k)_{k=1}^n \subset C$ and $(x_k^*)_{k=1}^n \subset B_{X^*}$ such that
$x^*_n(x_k)=0$ if $n > k$ and  $x^*_n(x_k)=\theta$ if
$n \leq k$.
\end{itemize}
\end{theo}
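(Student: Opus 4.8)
The plan is to deduce both implications from James' Theorem~\ref{james_seq} applied to the ultrapower $C^{\mathcal U}$. Since $C$ is bounded, closed and convex, so is $C^{\mathcal U}\subset X^{\mathcal U}$, and being convex and norm closed it is weakly closed; hence, by Definition~\ref{defsuper}, $C$ fails to be SWCC exactly when $C^{\mathcal U}$ is not weakly compact. Thus it suffices to turn the finite James patterns of $(ii)$ into an infinite James sequence in $C^{\mathcal U}$, and conversely.

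For $(ii)\Rightarrow(i)$ I would fix a free ultrafilter $\mathcal U$ on $\N$ and, for each $n$, take the data $(x_k^{(n)})_{k=1}^n\subset C$ and $(x_k^{*,(n)})_{k=1}^n\subset B_{X^*}$ given by $(ii)$ (completing both families arbitrarily inside $C$ and $B_{X^*}$ for indices larger than $n$). Setting $\xi_k=(x_k^{(n)})_n\in C^{\mathcal U}$, and letting $\xi_m^*=(x_m^{*,(n)})_n\in (X^*)^{\mathcal U}\subset B_{(X^{\mathcal U})^*}$ act by $\xi_m^*((z_n)_n)=\lim_{n,\mathcal U}x_m^{*,(n)}(z_n)$, the crucial observation is that for fixed $m,k$ one has $n\ge\max(m,k)$ for $\mathcal U$-almost all $n$; hence the ultrafilter limit $\xi_m^*(\xi_k)=\lim_{n,\mathcal U}x_m^{*,(n)}(x_k^{(n)})$ equals $0$ when $m>k$ and $\theta$ when $m\le k$, \emph{on the nose}. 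So $(\xi_m)$ and $(\xi_m^*)$ form a James sequence for $C^{\mathcal U}$, and Theorem~\ref{james_seq} gives that $C^{\mathcal U}$ is not weakly compact, i.e. $C$ is not SWCC.

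For $(i)\Rightarrow(ii)$ I would run the implication backwards. Theorem~\ref{james_seq} applied to the non weakly compact closed convex set $C^{\mathcal U}$ produces $\theta>0$, $(\xi_m)\subset C^{\mathcal U}$ and $(\xi_m^*)\subset B_{(X^{\mathcal U})^*}$ with $\xi_m^*(\xi_k)=0$ for $m>k$ and $=\theta$ for $m\le k$. Fix $n$ and keep only $\xi_1,\dots,\xi_n$ and $\xi_1^*,\dots,\xi_n^*$. A short computation with the functionals (if $\sum a_k\xi_k=0$ then applying $\xi_m^*$ forces $\sum_{k\ge m}a_k=0$ for every $m$, whence all $a_k=0$) shows that $\xi_1,\dots,\xi_n$ are linearly independent, so $E:=\mathrm{span}\{\xi_1,\dots,\xi_n\}$ is $n$-dimensional. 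Writing $\xi_k=(x_k^{(i)})_i$ with $x_k^{(i)}\in C$, the finite representability of $X^{\mathcal U}$ in $X$ gives, for any $\eps>0$ and $\mathcal U$-almost every $i$, that the coordinate map $u_i:E\to E_i:=\mathrm{span}\{x_1^{(i)},\dots,x_n^{(i)}\}$, $\xi_k\mapsto x_k^{(i)}$, is a linear $(1+\eps)$-isomorphism. For such an $i$ I would transport the restriction $\xi_m^*|_E$ through $u_i^{-1}$ and extend by Hahn--Banach, obtaining $y_m^{*}\in X^*$ with $\|y_m^{*}\|\le 1+\eps$ and $y_m^{*}(x_k^{(i)})=\xi_m^*(\xi_k)$ for all $k\le n$. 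Putting $x_k:=x_k^{(i)}\in C$ and rescaling the $y_m^*$ by $(1+\eps)^{-1}$ lands them in $B_{X^*}$ and realises $(ii)$ with the uniform constant $\theta'=\theta/(1+\eps)$, valid for every $n$ once $\eps$ is fixed.

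The main obstacle is precisely this last descent. Because $(X^{\mathcal U})^*$ is genuinely larger than $(X^*)^{\mathcal U}$, the James functionals furnished in $C^{\mathcal U}$ need not be represented by families in $X^*$, and the naive recipe (approximate $\xi_m^*|_E$ by an element of the weak$^*$-dense, $1$-norming subspace $(X^*)^{\mathcal U}$ and then evaluate at a single coordinate) reproduces the values $0$ and $\theta$ only up to an error, whereas $(ii)$ demands them exactly. Transporting the functionals \emph{isomorphically} along $u_i$, rather than extending and specialising, is what keeps the pairings $\xi_m^*(\xi_k)$ unchanged while distorting norms only by the Banach--Mazur factor $\|u_i^{-1}\|\le 1+\eps$; this is the step that makes the exact pattern, with a separation constant bounded away from $0$ independently of $n$, attainable.
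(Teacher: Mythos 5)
Your proof is correct and takes essentially the same route as the paper: for $(i)\Rightarrow(ii)$ both arguments apply James' Theorem~\ref{james_seq} to $C^{\mathcal U}$ and then descend by the finite representability of $C^{\mathcal U}$ in $C$, transporting the restricted functionals through the finite-dimensional $(1+\varepsilon)$-isomorphism and extending by Hahn--Banach (the paper in effect takes $\varepsilon=1$, hence its constant $\theta/2$). The only divergence is in the easy direction $(ii)\Rightarrow(i)$, where the paper just notes that the functional pattern implies the convex-hull separation of Proposition~\ref{SWC1}, while you build an exact James sequence in $C^{\mathcal U}$ by an ultraproduct of the finite patterns; both are valid, and your observation that the pairings pass to the ultralimit \emph{exactly} is sound.
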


\begin{proof} In view of Proposition \ref{SWC1}, we only need to show $(i) \Rightarrow (ii)$. So assume that $C$ is not SWCC. Then $C^{{\mathcal U}}$ is not weakly compact, so Theorem \ref{james_seq} insures the existence of $\theta>0$ and sequences $(u_n) \subset C^{{\mathcal U}}$, $(u_n^*)$ in the unit ball of $(X^{{\mathcal U}})^*$ such that $u^*_n(u_k)=0$ if $n > k$ and  $u^*_n(u_k)=\theta$ if
$n \leq k$. Now, we can use the finite representability of $C^{{\mathcal U}}$ in $C$ to find, for a fixed $n$ in $\N$, a sequence $(x_1,\ldots,x_n)$ in $C$ and a linear isomorphism $R_n$  from the linear span of $\{u_1,\ldots,u_n\}$ onto the linear span $E_n$ of $\{x_1,\ldots,x_n\}$ such that $R_n(u_i)=x_i$ for all $i\le n$, $\|R_n\|\le 2$ and $\|R_n^{-1}\|\le 2$. For $k\in \{1,\ldots,n\}$, define now a linear functional $y^*_k$ on $E_n$ by $y^*_k(x_i)=u^*_k(u_i)$. Remark that $\|y^*_k\|_{E_n^*}\le 2$ and denote $x^*_k$ its Hahn-Banach extension to $X$. Replacing $\theta$ by $\frac{\theta}{2}$, we get the desired result.
\end{proof}

For our applications to embedding results in section \ref{nonlinear}, it will be useful to refine this last result by showing that a non SWCC set $C$ is such that for any finite codimensional subspace $Y$ of $X$, $C\cap Y$ satisfies property (ii) with a parameter $\theta$ independent of $Y$. This is very much in the spirit of Corollary 5 in \cite{CauseyDilworth}, but we will detail here the version that is the most adapted for our embedding questions. As usual, we start with a statement about non weakly compact sets.

\begin{prop}\label{Jamescodim} Let $C \subset X$ be closed, convex, bounded, symmetric and not relatively weakly compact. Then, there exists $\theta >0$ such that for every finite codimensional subspace $Y$ of $X$, there exists $x^{**} \in \overline{C\cap Y}^{w^*}$, the weak$^*$-closure of $C\cap Y$ in $X^{**}$, such that $d(x^{**},X)>\theta$.
\end{prop}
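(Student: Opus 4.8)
The plan is to quantify James's theorem and reduce the statement to producing, inside each section $C\cap Y$, a bounded sequence carrying a one-sided ``staircase'' of functionals of uniform height, from which the far bidual point can be read off. Since $C$ is closed, convex, symmetric and not relatively weakly compact, Theorem~\ref{james_seq} furnishes $\theta>0$, a sequence $(x_n)\subset C$ and functionals $(x_n^*)\subset B_{X^*}$ with $x_n^*(x_k)=0$ for $n>k$ and $x_n^*(x_k)=\theta$ for $n\le k$. I will prove the statement with a fixed fraction of $\theta$ in place of $\theta$, uniformly over all finite codimensional $Y$.

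First I would isolate the mechanism that turns a staircase into a point far from $X$. Suppose $(y_k)\subset C\cap Y$ and $(y_k^*)\subset B_{X^*}$ satisfy $y_n^*(y_k)\ge\alpha$ whenever $k\ge n$ and $y_n^*(y_k)\le 0$ whenever $k<n$. Fix a free ultrafilter $\mathcal U$ on $\N$ finer than the Fr\'echet filter and let $x^{**}$ be the weak$^*$ limit of $(y_k)$ along $\mathcal U$, so that $x^{**}\in\overline{C\cap Y}^{w^*}$ and $x^{**}(y_n^*)=\lim_{\mathcal U}y_n^*(y_k)\ge\alpha$ for every $n$. Let $L\in X^{***}$ be the weak$^*$ limit of $(y_n^*)$ along $\mathcal U$ and put $\ell=L|_X\in X^*$; then $\|L\|\le 1$, and since $y_n^*(y_k)\le 0$ once $n>k$ we get $\ell(y_k)=\lim_{\mathcal U}y_n^*(y_k)\le 0$, hence $x^{**}(\ell)=\lim_{\mathcal U}\ell(y_k)\le 0$. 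The functional $\Lambda:=L-\ell$ (with $\ell$ viewed in $X^{***}$) then satisfies $\|\Lambda\|\le 2$, vanishes on $X$ because $\Lambda(x)=\ell(x)-\ell(x)=0$, and $\Lambda(x^{**})=L(x^{**})-x^{**}(\ell)\ge\alpha$. Consequently $d(x^{**},X)\ge\Lambda(x^{**})/\|\Lambda\|\ge\alpha/2$. Thus everything reduces to building, inside $C\cap Y$, such a staircase with $\alpha$ bounded below independently of $Y$.

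For the construction I would write $Y=\bigcap_{i=1}^m\ker f_i$ and, after passing to a subsequence, assume $f_i(x_n)\to c_i$ for each $i$. The key idea is to anchor and take tail differences: choose a block of large indices $[M,M']$ and a convex combination $a=\sum_{j\in[M,M']}\mu_j x_j\in C$ with $f_i(a)$ close to $c_i$, and set $z_k=\tfrac12(a-x_k)$ for $k>M'$. By symmetry and convexity each $z_k$ lies in $C$; and taking $\beta_n^*:=-x_n^*$ for $n>M'$, the anchor contributes nothing (as $x_n^*(x_j)=0$ for $n>M'\ge j$), so $\beta_n^*(z_k)=\tfrac\theta2$ when $k\ge n$ and $\beta_n^*(z_k)=0$ when $k<n$: a $\tfrac\theta2$-staircase. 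Moreover $f_i(z_k)=\tfrac12\big(f_i(a)-f_i(x_k)\big)\to\tfrac12\big(f_i(a)-c_i\big)$, which is small, so $(z_k)$ lives in $C$, carries a staircase of height $\theta/2$, and is only asymptotically inside $Y$.

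The remaining step, which I expect to be the main obstacle, is to force the sequence to lie \emph{exactly} in $C\cap Y$ while keeping the staircase height bounded below by a constant independent of $Y$; this is needed so that the cluster point produced above genuinely belongs to $\overline{C\cap Y}^{w^*}$ and not merely to the possibly larger set $\overline{C}^{w^*}\cap Y^{\perp\perp}$. I would correct the small residuals $f_i(z_k)$ by adding a signed combination of far-out terms $x_j$ with small $\ell_1$-mass of coefficients; the delicate point is to realize the $m$ transverse corrections with total coefficient mass that does not grow with $m$, since the vectors $(f_i(x_j))_i$ cluster near $(c_i)_i$ and thus span the transverse directions only weakly. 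This quantitative balancing is exactly where the argument runs parallel to Corollary~5 of \cite{CauseyDilworth}. Once $(z_k)\subset C\cap Y$ with an $\alpha$-staircase, $\alpha\ge\theta/4$ say, the lemma of the second paragraph yields $x^{**}\in\overline{C\cap Y}^{w^*}$ with $d(x^{**},X)\ge\alpha/2$, and renaming the constant gives the statement. By contrast, the James input and the bidual lemma are routine; the whole difficulty is concentrated in the uniform, exact control of the finitely many linear constraints.
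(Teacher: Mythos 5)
Your second paragraph (the ``staircase implies far bidual point'' lemma) is correct, and so are the James input (Theorem \ref{james_seq}) and the anchoring trick producing $z_k=\frac12(a-x_k)\in C$ carrying a $\theta/2$-staircase while only \emph{asymptotically} satisfying the constraints, $f_i(z_k)\to\frac12(f_i(a)-c_i)$. But the argument stops exactly where the statement begins: you never produce points lying \emph{in} $C\cap Y$ with staircase height bounded below independently of $Y$ --- you explicitly defer this to an unexecuted ``quantitative balancing'', and that is a genuine gap, not a routine detail. Your own diagnosis shows why the proposed fix fails as stated: the available correction vectors (signed combinations of far-out $x_j$'s) have transverse components $(f_i(x_j))_i$ all clustering at $(c_i)_i$, so cancelling a residual of size $\delta$ costs coefficient mass of order $\delta/\rho$, where $\rho$ is the (uncontrolled, possibly arbitrarily small) spread of that cluster; nothing in the proposal bounds this ratio, and the perturbation could destroy both membership in $C$ and the lower bound on the staircase. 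A diagonal variant (letting the anchor's accuracy improve with $k$) does not rescue it either: for a diagonal sequence the functionals $-x_n^*$ evaluate to $\theta$ on \emph{both} terms of late differences, so the one-sided pattern ($\ge\alpha$ for $k\ge n$, $\le 0$ for $k<n$) collapses to a diagonal pattern, which your bidual lemma cannot use. And without exact membership, a weak$^*$ cluster point only lands in $\overline{C}^{w^*}\cap Y^{\perp\perp}$, which, as you correctly note, may be strictly larger than $\overline{C\cap Y}^{w^*}$.

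The paper's proof avoids all of this and does not use James's theorem for this proposition at all. It works in the Banach space $Z$ generated by $C$ and normed by the Minkowski functional $|\cdot|_C$, sets $W=I^{-1}(Y)$ (a closed finite codimensional subspace of $Z$, where $I:Z\to X$ is the inclusion), and invokes the Bartle--Graves selection theorem to get a compact set $L\subset Z$ with $B_Z\subset 3B_W+L$; pushing forward by $I$ gives a compact $K\subset X$ with $C\subset 3(C\cap Y)+K$. If the conclusion failed, then for every $\eps>0$ some finite codimensional $Y$ would satisfy $\overline{C\cap Y}^{w^*}\subset X+\eps B_{X^{**}}$, whence $\overline{C}^{w^*}\subset 3\,\overline{C\cap Y}^{w^*}+K\subset X+3\eps B_{X^{**}}$ for every $\eps$, so $\overline{C}^{w^*}\subset X$ and $C$ would be relatively weakly compact, a contradiction; the uniform $\theta$ comes for free from this contradiction scheme. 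Note finally that the proposition only demands \emph{one} far bidual point per $Y$; the staircases inside $C\cap Y$ that you are trying to build constitute the strictly stronger Theorem \ref{SWCcodim}, which the paper then derives from this proposition applied in an ultrapower together with the \emph{proof} of James's theorem. So your plan attacks the harder statement head-on --- essentially Corollary 5 of \cite{CauseyDilworth} --- and would need that paper's machinery, which you do not supply, to be completed.
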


\begin{proof} First, we claim that for every finite codimensional subspace $Y$ of $X$, there is a compact subset $K$ of $X$ such that $C$ is included in $(3C\cap Y)+K$. Indeed, denote by $Z$ the space linearly spanned by $C$ and equipped with $|\ |_C$, the Minkowski functional of $C$, as the norm. It is standard that $(Z,|\ |_C)$ is a Banach space (see \cite[Exercise 2.22]{banach} for instance). Since $C$ is bounded, the identity mapping $I$, from $Z$ to $X$ is bounded. Then $W=I^{-1}(Y)$ is also a closed finite codimensional subspace of $Z$. Then, a well known application of the Bartle-Graves selection theorem (see \cite{DGZ}) insures that there exists a compact subset $L$ of $Z$ such that $B_Z \subset 3B_W+L$. Applying the map $I$ and denoting $K=I(L)$ finishes the proof of our claim.

Assume now that the conclusion of the proposition is false. Then for any $\eps>0$, there exists a finite codimensional subspace $Y$ of $X$ such that $\overline{C\cap Y}^{w^*} \subset X+\eps B_X$. But our first claim implies that $\overline{C}^{w^*}\subset 3 \overline{C\cap Y}^{w^*}+K$, for some compact subset $K$ of $X$. We deduce that for any $\eps>0$, $\overline{C}^{w^*}\subset X+3\eps B_X$, which implies that $C$ is weakly compact, a contradiction.
\end{proof}

We now turn again to non super weakly compact sets.

\begin{theo}\label{SWCcodim} Let $C \subset X$ be closed, convex, bounded and symmetric. The following statements are equivalent.
\begin{itemize}
\item[(i)] $C$ is not SWCC;
\item[(ii)] There exists $\theta>0$ such that for every  finite codimensional subspace $Y$ of $X$, every $n \in \N$, there exist $(x_k)_{k=1}^n \subset C\cap Y$ and $(x_k^*)_{k=1}^n \subset B_{X^*}$ such that
$x^*_n(x_k)=0$ if $n > k$ and  $x^*_n(x_k)=\theta$ if
$n \leq k$.
\end{itemize}
\end{theo}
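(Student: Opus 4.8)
The plan is to prove the easy implication $(ii)\Rightarrow(i)$ by specialization, and to obtain $(i)\Rightarrow(ii)$ by transporting the uniform-in-$Y$ information of Proposition \ref{Jamescodim} up to the ultrapower $C^{\mathcal U}$ and then descending by finite representability, exactly as in the proof of Theorem \ref{james_sty}. For $(ii)\Rightarrow(i)$ it suffices to take $Y=X$: then condition $(ii)$ becomes precisely condition $(ii)$ of Theorem \ref{james_sty}, so $C$ is not SWCC.

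For $(i)\Rightarrow(ii)$, I would assume $C$ is not SWCC, so that $C^{\mathcal U}$ is a closed, convex, bounded and symmetric subset of $X^{\mathcal U}$ which is not relatively weakly compact, and fix once and for all a point $w^{**}\in\overline{C^{\mathcal U}}^{\,w^*}$ with $2\alpha:=d(w^{**},X^{\mathcal U})>0$, the weak$^*$-closure and the distance being computed in $(X^{\mathcal U})^{**}$. The heart of the argument is to produce, for an arbitrary finite codimensional $Y\subset X$, a far bidual point of the \emph{honest} ultrapower $(C\cap Y)^{\mathcal U}$, with a margin independent of $Y$. To this end I would invoke the Bartle--Graves claim established inside the proof of Proposition \ref{Jamescodim}, which for any bounded symmetric closed convex $C$ and any finite codimensional $Y$ furnishes a compact $K\subset X$ with $C\subset 3(C\cap Y)+K$. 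Passing to ultrapowers, and using that $K^{\mathcal U}=K$ for $K$ compact, gives $C^{\mathcal U}\subset 3(C\cap Y)^{\mathcal U}+K$; taking weak$^*$-closures in $(X^{\mathcal U})^{**}$ and using that the sum of a weak$^*$-closed set with the norm-compact set $K$ is weak$^*$-closed yields $\overline{C^{\mathcal U}}^{\,w^*}\subset 3\,\overline{(C\cap Y)^{\mathcal U}}^{\,w^*}+K$. Writing $w^{**}=3p^{**}+k$ with $p^{**}\in\overline{(C\cap Y)^{\mathcal U}}^{\,w^*}$ and $k\in K\subset X^{\mathcal U}$, and translating by $k\in X^{\mathcal U}$, I obtain $d(p^{**},X^{\mathcal U})=\tfrac13 d(w^{**},X^{\mathcal U})=\tfrac{2\alpha}{3}=:\theta_0$, a quantity that does \emph{not} depend on $Y$.

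It then remains to convert this uniform bidual margin into James data and to push it down to $X$. From $p^{**}\in\overline{(C\cap Y)^{\mathcal U}}^{\,w^*}$ with $d(p^{**},X^{\mathcal U})\ge\theta_0$ I would run the quantitative form of James's construction (in the spirit of \cite{James2} and \cite{CauseyDilworth}) to obtain, for each $n$, points $(u_k)_{k=1}^n\subset (C\cap Y)^{\mathcal U}$ and functionals $(u_k^*)_{k=1}^n\subset B_{(X^{\mathcal U})^*}$ with $u_n^*(u_k)=0$ for $n>k$ and $u_n^*(u_k)=\theta'$ for $n\le k$, where $\theta'\ge c\,\theta_0$. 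Because these data live in the honest ultrapower $(C\cap Y)^{\mathcal U}$, I can then repeat verbatim the descent of Theorem \ref{james_sty}, now applied to $C\cap Y$: finite representability of $(C\cap Y)^{\mathcal U}$ in $C\cap Y$ (realizing the $u_i$ at a single coordinate) produces $(x_k)_{k=1}^n\subset C\cap Y$ together with an isomorphism $R_n$ of norm and inverse norm at most $2$ sending $u_i$ to $x_i$; defining $y_k^*(x_i):=u_k^*(u_i)$ on $\mathrm{span}\{x_i\}$, extending by Hahn--Banach and normalizing gives the required $(x_k^*)_{k=1}^n\subset B_{X^*}$ realizing the exact relations for the value $\theta:=\theta'/2$, uniformly in $Y$.

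The main obstacle is the middle step, namely turning the uniform lower bound $d(p^{**},X^{\mathcal U})\ge\theta_0$ into James sequences whose upper-triangular values are all equal to a \emph{single} constant comparable to $\theta_0$. The mere existence of such sequences follows from Theorem \ref{james_seq}, but the value it returns is a priori uncontrolled, so it cannot be used as a black box here; the exactness of the constant is the delicate point and would be secured by James's averaging device (replacing the later points by suitable convex combinations so that the functional values in each row coincide), while the lower bound $\theta'\ge c\,\theta_0$ is read off from the uniform margin produced in the previous step. The remaining verifications are routine: that $C^{\mathcal U}$ inherits closedness, convexity, boundedness and symmetry, that $C\cap Y$ is again closed convex bounded symmetric, that $K^{\mathcal U}=K$ for compact $K$, and that weak$^*$-closures behave as claimed under the compact perturbation.
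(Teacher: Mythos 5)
Your proof is correct, and its overall architecture is the same as the paper's: reduce to $(i)\Rightarrow(ii)$ (with $Y=X$ giving $(ii)\Rightarrow(i)$ via Theorem \ref{james_sty}), produce for each finite codimensional $Y$ a point of $\overline{(C\cap Y)^{\mathcal U}}^{w^*}$ whose distance to $X^{\mathcal U}$ is bounded below independently of $Y$, extract James sequences in the ultrapower from the quantitative content of the \emph{proof} of Theorem \ref{james_seq}, and descend by finite representability exactly as in Theorem \ref{james_sty}. Where you genuinely deviate is the middle step. The paper checks that $Y^{\mathcal U}$ is finite codimensional in $X^{\mathcal U}$, identifies $(C\cap Y)^{\mathcal U}$ with $C^{\mathcal U}\cap Y^{\mathcal U}$, and applies Proposition \ref{Jamescodim} as a black box to the single set $C^{\mathcal U}$ inside $X^{\mathcal U}$, the uniformity in $Y$ coming from the fact that the proposition's $\theta$ depends only on $C^{\mathcal U}$. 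You instead keep the Bartle--Graves containment $C\subset 3(C\cap Y)+K$ down in $X$, lift it to $C^{\mathcal U}\subset 3(C\cap Y)^{\mathcal U}+K$ (using $K^{\mathcal U}\cong K$ for norm-compact $K$, which is standard since ultrafilter limits exist in norm on $K$), and split one fixed far point $w^{**}\in\overline{C^{\mathcal U}}^{w^*}$ as $3p^{**}+k$, getting $d(p^{**},X^{\mathcal U})=\frac13\,d(w^{**},X^{\mathcal U})$ uniformly in $Y$; your auxiliary facts (weak$^*$-closure of a set plus a norm-compact set, invariance of the distance under translation by $k\in X^{\mathcal U}$) are all correct. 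This variant buys something concrete: you never need the identification $(C\cap Y)^{\mathcal U}=C^{\mathcal U}\cap Y^{\mathcal U}$, which the paper asserts without proof and which is its one slightly delicate point (one must perturb a family in $C$ that is only asymptotically in $Y$ into a family lying in $C\cap Y$), and your James data automatically live in the honest ultrapower $(C\cap Y)^{\mathcal U}$, which is precisely what the finite-representability descent requires. The cost is rerunning part of the proof of Proposition \ref{Jamescodim} rather than quoting its statement. Finally, you are right --- and more explicit than the paper --- that Theorem \ref{james_seq} cannot be invoked as a black box at this stage: both arguments must enter the proof of James's theorem to get the row value $\theta'$ bounded below in terms of the bidual distance, uniformly in $Y$; your treatment of this point is at least as careful as the paper's one-line appeal to it.
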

\begin{proof} Again, we only need to show $(i) \Rightarrow (ii)$. So assume that $C$ is not SWCC, let $\mathcal U$ be a non trivial ultrafilter on $\N$ and $Y$ be a finite codimensional subspace of $X$. Then, using norm compactness in a finite dimensional complement of $Y$ in $X$, we easily see that $Y^{\mathcal U}$ is a finite codimensional subspace of $X^{\mathcal U}$. Then, identifying $(C\cap Y)^{\mathcal U}$ with $C^{\mathcal U}\cap Y^{\mathcal U}$ and using Proposition \ref{Jamescodim} we deduce the existence of $\theta>0$, independent of $Y$, such that  $\overline{(C\cap Y)^{\mathcal U}}^{w^*}$ (the weak$^*$ closure is meant here in $(X^{\mathcal U})^{**}$) has points at distance from $X^{\mathcal U}$ greater than $\theta$. Now, the proof of James' theorem (Theorem \ref{james_seq} of this paper) provides us with
sequences $(u_n)_{n=1}^\infty$ in $(C\cap Y)^{\mathcal U}$, $(u_n^*)_{n=1}^\infty$ in the unit ball of $(X^{\mathcal U})^*$ (actually in the unit ball of $(Y^{\mathcal U})^*$, but we may consider their Hahn-Banach extensions) such that $u^*_n(u_k)=0$ if $n > k$ and  $u^*_n(u_k)=\theta$ if
$n \leq k$. We conclude, similarly to the proof of Theorem \ref{james_sty}, by using the finite representability of $(C\cap Y)^{\mathcal U}$ in $C\cap Y$.
\end{proof}

\section{Uniformly convex sets}\label{UCsets}

Let us say that a symmetric bounded closed convex set $K$ is {\it uniformly convex} if for every $\varepsilon>0$ there is $\delta>0$ such that
$$\forall x,y \in K,\ \ \|x-y\|>\varepsilon\ \Rightarrow \frac{x+y}{2} \in (1-\delta)K.$$
There is a more popular definition of uniform convexity for sets in finite dimension, but its natural extension to Banach spaces \cite{Polyak} is not equivalent to ours and only super-reflexive spaces can contain such sets. For a uniformly convex set $K$ we may define the convexity modulus as
$$ \delta_K(\varepsilon) = \inf\Big\{ 1 - \big|\frac{x+y}{2}\big|_K: x,y \in K, \|x-y\| \geq \varepsilon\Big\},$$
where $| \cdot |_K$ is the Minkowski functional of $K$. Note that if $K$ is not a segment, then there is a 2-dimensional subspace $Y$ such that $K \cap Y$ is an equivalent uniformly convex ball on $Y$, therefore $\delta_K(\varepsilon) \leq c \varepsilon^2$, for some $c>0$. On the other hand, a lower bound for $\delta$ of power type is not guaranteed as we will see later in examples.

\begin{lema}\label{triang}
Let $K$ be uniformly convex and $| \cdot |_K$ its Minkowski functional. Then whenever $x,y \in K$ we have
$$ \big| \frac{x+y}{2} \big|_K \leq \max\{ |x|_K,|y|_K\} - \delta_K(\|x-y\|). $$
\end{lema}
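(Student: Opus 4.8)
The plan is to prove the inequality by exhibiting, for given $x,y\in K$, a single well-chosen pair of points of $K$ whose mutual distance is exactly $\|x-y\|$ and whose midpoint sits as close to the boundary of $K$ as the bound requires; feeding that pair into the definition of $\delta_K$ should then produce the estimate in one shot.

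First I would record what the naive approach gives and why it falls short. Applying the definition of $\delta_K$ to the pair $x,y$ itself (which lies in $K$ and has $\|x-y\|\ge\|x-y\|$) yields $\big|\frac{x+y}{2}\big|_K\le 1-\delta_K(\|x-y\|)$. This is the desired conclusion but with the constant $1$ in place of $M:=\max\{|x|_K,|y|_K\}$. Since $M\le 1$ for points of $K$, the genuine content of the lemma is precisely this homogenization: replacing $1$ by the (possibly smaller) number $M$, i.e. gaining the quantity $1-M$.

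The key idea I would use is a radial translation. Write $m=\frac{x+y}{2}$ and pick a boundary point $e$ with $|e|_K=1$ radially aligned with $m$, that is $m=|m|_K\,e$ (take $e=m/|m|_K$ when $m\neq0$, and any unit vector otherwise). Define the translated pair $u=x+(1-M)e$ and $v=y+(1-M)e$. Three facts must then be checked. (1) Membership $u,v\in K$: since $M$ is the maximum of the two Minkowski norms, $x/M\in K$, and $u=M\cdot(x/M)+(1-M)e$ is a convex combination of $x/M\in K$ and $e\in K$, hence lies in $K$; likewise for $v$. (2) The translation does not change the gap, so $\|u-v\|=\|x-y\|$. (3) The midpoint computes to $\frac{u+v}{2}=m+(1-M)e=(|m|_K+1-M)\,e$, so that $\big|\frac{u+v}{2}\big|_K=|m|_K+1-M$. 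Plugging the admissible pair $u,v$ into the definition of the modulus gives $\delta_K(\|x-y\|)\le 1-\big|\frac{u+v}{2}\big|_K=M-|m|_K$, which is exactly the claimed inequality after rearranging.

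The step I expect to be the main obstacle — and the one that makes the argument work — is choosing the translation so that the membership $u,v\in K$ and the additive behaviour of the Minkowski functional at the midpoint hold simultaneously: the radial alignment $m=|m|_K\,e$ is what lets $\big|\frac{u+v}{2}\big|_K$ split cleanly as $|m|_K+(1-M)$, while the size $(1-M)$ of the translation is exactly what convexity through $x/M\in K$ can absorb. As a sanity check I would verify the degenerate cases ($M=0$ forces $x=y=0$; $m=0$ lets $e$ be arbitrary), which are immediate. I would also note the alternative route — scale $x,y$ by $1/M$ into $K$ to get the homogeneous bound $\big|\frac{x+y}{2}\big|_K\le M-M\,\delta_K(\|x-y\|/M)$ and then invoke monotonicity of $\varepsilon\mapsto\delta_K(\varepsilon)/\varepsilon$ — but this forces one to first establish that monotonicity for the set modulus, so the translation argument is preferable precisely because it sidesteps it.
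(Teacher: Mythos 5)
Your proof is correct and is essentially the paper's own argument: your translated pair $u=x+(1-M)e$, $v=y+(1-M)e$ with $e=m/|m|_K$ is exactly the paper's $x+\lambda z$, $y+\lambda z$ with $\lambda=1-\max\{|x|_K,|y|_K\}$ and $z=(x+y)/|x+y|_K$, fed into the definition of $\delta_K$ in the same way. The only cosmetic differences are that the paper disposes of the case $x=-y$ via the preliminary remark $\delta_K(2\|x\|)\le |x|_K$ whereas your arbitrary choice of $e$ absorbs it into the same computation, and that you spell out the membership $u,v\in K$ (the convex combination through $x/M$) which the paper merely asserts.
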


\begin{proof} Note that we always have $\delta_K(2\|x\|) \leq |x|_K$. Thus we may assume $x \not = -y$. Let
$$\lambda =1- \max\{|x|_K,|y|_K\}\ \text{and}\  z=\frac{x+y}{|x+y|_K}.$$
Observe that $x+\lambda z, y+\lambda z \in K$ and so
$$ \big| \frac{x+y}{2} +\lambda z \big|_K = \big| \frac{x+y}{2} \big|_K + \lambda \leq 1 - \delta_K(\|x-y\|) $$
which implies the desired inequality.
\end{proof}

\begin{prop}
Any uniformly convex set is SWC. Reciprocally, any SWC set is contained in some uniformly convex set.
\end{prop}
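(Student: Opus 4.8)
For the first implication (uniformly convex $\Rightarrow$ SWC), the natural strategy is to use characterization $(ii)$ of Proposition \ref{superequival}: a bounded closed convex set is SWC if and only if it is finitely dentable. So I would fix $\varepsilon > 0$ and show that the dentability derivation $[K]'_\varepsilon$ strictly shrinks the Minkowski functional. Concretely, Lemma \ref{triang} tells us that any point $x$ with $|x|_K$ close to $1$ lies at the ``top'' of $K$ in the Minkowski norm, and that midpoints of $\varepsilon$-separated pairs drop by at least $\delta_K(\varepsilon)$. The idea is that for each $\varepsilon$ there is a slice of $K$ (cut by a functional nearly supporting the set) whose diameter is at most $\varepsilon$: indeed, if two points $x,y$ both lie near the support hyperplane $\{|x|_K = 1\}$ but $\|x - y\| > \varepsilon$, then their midpoint has $|(x+y)/2|_K \le 1 - \delta_K(\varepsilon)$ by Lemma \ref{triang}, pushing it away from the hyperplane. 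Hence a thin enough slice near the top has diameter $\le \varepsilon$, and so each application of $[\,\cdot\,]'_\varepsilon$ forces the remaining set to have strictly smaller Minkowski radius. Iterating, after finitely many steps (controlled by $\delta_K(\varepsilon)$) the set is exhausted, giving $Dz(K,\varepsilon) < \infty$.

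For the converse (SWC $\Rightarrow$ contained in a uniformly convex set), I would invoke Theorem \ref{interpol}: a SWC set $K$ satisfies $K \subset T(B_Z)$ for a reflexive $Z$ and a super weakly compact operator $T : Z \to X$. Since $B_Z$ is the unit ball of a reflexive (indeed, after the interpolation machinery, super-reflexive) space, the standard route is to renorm $Z$ so that $B_Z$ itself becomes a uniformly convex set. One can then define a candidate uniformly convex superset of $K$ as the closed convex, symmetric hull of $T(B_Z)$, or more directly pull back the uniform convexity through $T$. Alternatively, and perhaps more cleanly, I would use characterization $(vi)$ of Proposition \ref{superequival}: there is an equivalent norm $|\!|\!| \cdot |\!|\!|$ whose square is uniformly convex on $K$, and then take a sublevel set of $|\!|\!| \cdot |\!|\!|^2$ (a suitable ball in that norm) containing $K$; verifying that such a ball is uniformly convex in the set-sense defined here is then a direct modulus computation.

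The main obstacle I expect is the converse direction. The delicate point is that a genuinely uniformly convex \emph{set} is a stronger requirement than just being contained in a uniformly convex \emph{ball} of some norm: the definition here demands that midpoints of separated pairs \emph{within $K$} retract into $(1-\delta)K$, measured by $K$'s own Minkowski functional. So one must actually produce a uniformly convex set $\widehat{K} \supset K$ and exhibit its modulus, rather than merely renorm the ambient space. Passing from the super weakly compact operator factorization, or from the uniformly convex function of $(v)$–$(vi)$, to an honest uniformly convex superset requires care in choosing the right sublevel set and checking the set-convexity modulus; I would expect this bookkeeping, rather than any conceptual difficulty, to be where the real work lies.
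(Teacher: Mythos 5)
Your first implication is essentially the paper's own argument and is fine: uniform convexity gives $[K]'_\varepsilon \subset (1-\delta_K(\varepsilon))K$ (a Hahn--Banach functional supporting $|\cdot|_K$ near a point with $|x|_K>1-\delta_K(\varepsilon)$ cuts a slice of diameter at most $\varepsilon$, by the definition of the modulus), and then homogeneity plus monotonicity of the slice derivation yields $[K]^n_\varepsilon \subset (1-\delta_K(\varepsilon))^n K$, hence finite dentability and SWC by Proposition \ref{superequival}(ii). Lemma \ref{triang} is not actually needed for this.

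The converse, however, has a genuine gap in both routes you propose. In the first route, Theorem \ref{interpol} provides only a \emph{reflexive} space $Z$; your parenthetical claim that the interpolation machinery makes $Z$ super-reflexive is unjustified and is not what the DFJP/Beauzamy construction delivers (whether super weakly compact operators factor through super-reflexive spaces is a well-known delicate issue, and the paper itself recalls that there are SWC sets that do not embed into super-reflexive spaces, so any step that smuggles in super-reflexivity of $Z$ should be suspect). Without a uniformly convex norm on $Z$ there is no modulus to push through $T$, and a merely reflexive ball gives you nothing. The second route fails outright: a sublevel set of $|\!|\!| \cdot |\!|\!|^2$ is just a ball of the equivalent norm $|\!|\!| \cdot |\!|\!|$, and condition (vi) asserts uniform convexity of $|\!|\!| \cdot |\!|\!|^2$ only \emph{on $K$}, so you have no control over midpoints of separated pairs elsewhere in that ball. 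Worse, such a ball can never be uniformly convex in the set sense unless $X$ is super-reflexive: by the first half of the very proposition you are proving, a uniformly convex set is SWC, and if the ball of an equivalent norm on $X$ is SWC then $X$ is super-reflexive. So in the interesting case of a non-super-reflexive ambient space this candidate is dead on arrival. The missing ingredient, which is exactly how the paper argues, is Beauzamy's theorem \cite{beau1}: the reflexive space $Z$ from Theorem \ref{interpol} can be renormed so that the \emph{operator} $T$ becomes uniformly convex, meaning that $x,y\in B_Z$ and $\|Tx-Ty\|\ge\varepsilon$ force $\big\|\frac{x+y}{2}\big\|_Z\le 1-\delta$. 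The uniform convexity lives in the operator, not in the space, and it transfers directly to the image: if $u=Tx$, $v=Ty$ lie in $T(B_Z)$ with $\|u-v\|\ge\varepsilon$, then $\frac{u+v}{2}=T\big(\frac{x+y}{2}\big)\in(1-\delta)T(B_Z)$, so $C=\overline{T(B_Z)}\supset K$ is uniformly convex with modulus inherited from $\delta$. Your phrase ``pull back the uniform convexity through $T$'' gestures at this, but without Beauzamy's renorming there is no uniform convexity anywhere to pull back.
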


\begin{proof} Uniform convexity implies that $[K]'_\varepsilon \subset (1-\delta_K(\varepsilon)) K$. Then an homogeneity argument clearly yields that $[K]^n_\varepsilon \subset (1-\delta_K(\varepsilon))^n K$. So, there exists $n$ in $\N$ such that the $\|\ \|$-diameter of $[K]^n_\varepsilon$ is smaller than $\varepsilon$ and therefore such that $[K]^{n+1}_\varepsilon$ is empty. On the other hand, if $K$ is SWC, by Theorem \ref{interpol} and \cite{beau1}, there is a uniformly convex operator $T: Z \rightarrow X$ such that $K \subset T(B_Z)$. It is then obvious that $\overline{T(B_Z)}$ is uniformly convex.
\end{proof}

\begin{prop}
Let $K$ be SWCC and symmetric. Then for every $\delta>0$ there is a uniformly convex set $C$ such that $ K \subset C \subset (1+\delta)K$.
\end{prop}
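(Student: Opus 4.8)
The plan is to realise $C$ as the unit ball of a norm obtained by blending the Minkowski functional $|\cdot|_K$ with a genuinely uniformly convex norm, the blend being weighted so heavily towards $|\cdot|_K$ that $C$ cannot escape $(1+\delta)K$. Since $K$ is SWCC and symmetric, statement $(vi)$ of Proposition \ref{superequival} furnishes an equivalent norm $|\!|\!|\cdot|\!|\!|$ on $X$ whose square is uniformly convex on $K$. As $K$ is bounded, $B:=\sup_{x\in K}|\!|\!|x|\!|\!|<\infty$; replacing $|\!|\!|\cdot|\!|\!|$ by $N:=B^{-1}|\!|\!|\cdot|\!|\!|$ we may assume $N(x)\le|x|_K$ for all $x$ in $Z:=\mathrm{span}\,K$, while $N^2$ remains uniformly convex on $K$. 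Writing $q:=|\cdot|_K$ (a norm on $Z$ with $K=\{q\le 1\}$), I fix, for the given $\delta>0$, a parameter $t\in(0,1)$ with $(1-t)^{-1/2}\le 1+\delta$ and set
\[ g:=\bigl((1-t)\,q^2+t\,N^2\bigr)^{1/2},\qquad C:=\{x:g(x)\le 1\}. \]

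First I would record the elementary properties of $C$. The function $g^2$ is a nonnegative combination of squares of norms, hence convex, $2$-homogeneous and even; it is lower semicontinuous on $X$ once we set $q\equiv+\infty$ off $Z$, so $C$ is a symmetric closed convex set. The inequality $N\le q$ gives $g\le q$, whence $K=\{q\le 1\}\subseteq C$; and $g\ge\sqrt{1-t}\,q$ gives $C\subseteq(1-t)^{-1/2}K\subseteq(1+\delta)K$. In particular $C$ is bounded, so it is a legitimate candidate for a uniformly convex set and it already satisfies the required sandwich $K\subseteq C\subseteq(1+\delta)K$.

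The substance of the argument is the uniform convexity of $C$. Let $\varepsilon>0$ and take $x,y\in C$ with $\|x-y\|\ge\varepsilon$. The crucial point is that, by the previous paragraph, $x,y\in(1+\delta)K$, so I can transfer the uniform convexity of $N^2$ from $K$ to $(1+\delta)K$: writing $x=(1+\delta)x'$, $y=(1+\delta)y'$ with $x',y'\in K$ and using that $N^2$ is $2$-homogeneous while $\|x'-y'\|\ge\varepsilon/(1+\delta)$, the uniform convexity modulus of $N^2$ on $K$ produces a constant $c'=c'(\varepsilon,\delta)>0$ with
\[ \frac{N(x)^2+N(y)^2}{2}-N\Bigl(\frac{x+y}{2}\Bigr)^2\ \ge\ c'. \]
Since $q^2$ is convex we also have $q(\tfrac{x+y}{2})^2\le\tfrac12\bigl(q(x)^2+q(y)^2\bigr)$. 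Combining these two facts and using $g(x),g(y)\le 1$,
\[ g\Bigl(\frac{x+y}{2}\Bigr)^2\le\frac{g(x)^2+g(y)^2}{2}-t\,c'\le 1-t\,c', \]
so that $\tfrac{x+y}{2}\in(1-\eta)C$ with $\eta=1-\sqrt{1-tc'}>0$ depending only on $\varepsilon$. Hence $C$ is uniformly convex, which completes the construction.

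The only delicate step is the displayed deficit estimate: one must check that the uniform convexity of $|\!|\!|\cdot|\!|\!|^2$, which Proposition \ref{superequival} only guarantees on $K$, really does survive the passage to the slightly enlarged set $(1+\delta)K$ in which the points of $C$ live, and that the weight $t$ can be chosen small enough to keep $C$ inside $(1+\delta)K$ without destroying the positivity of the resulting modulus. Both are handled by the homogeneity of $N^2$ together with the freedom in the choice of $t$, so I do not anticipate further obstructions; all the remaining verifications (convexity, lower semicontinuity, and the two inclusions) are routine.
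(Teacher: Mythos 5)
Your proof is correct, but it takes a genuinely different route from the paper's. The paper argues through Beauzamy's theory of uniformly convexifying operators: since $K$ is SWCC, the inclusion of $(Z,|\cdot|_K)$ into $X$ is a super weakly compact operator, so by Beauzamy's renorming theorem there is an equivalent norm $|\cdot|_u$ on $Z$ making this operator uniformly convex; all the norms $|\cdot|_K+\varepsilon|\cdot|_u$ retain this property, and for $\varepsilon$ small enough the unit ball of such a norm is the desired set. You instead start from statement $(vi)$ of Proposition \ref{superequival} --- an ambient equivalent norm whose square is uniformly convex \emph{on $K$ only} --- and blend its square with $|\cdot|_K^2$ in an $\ell_2$ fashion. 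The price of your choice of ingredient is exactly the step you single out: since $(vi)$ gives no information outside $K$, you must transfer the modulus to $(1+\delta)K$, and you do this correctly via the $2$-homogeneity of $N^2$: the deficit $\tfrac12\bigl(N^2(x)+N^2(y)\bigr)-N^2\bigl(\tfrac{x+y}{2}\bigr)$ scales by the factor $(1+\delta)^2$ under dilation, so it can only improve. In the paper's argument no such transfer arises, because uniform convexity of the operator is a homogeneous property of the whole renormed ball of $Z$; on the other hand, the paper leaves the verifications implicit (``for $\varepsilon>0$ small enough\dots''), whereas your write-up makes everything explicit: the sandwich $K\subseteq C\subseteq(1+\delta)K$, the closedness of $C$ via lower semicontinuity of $q$ extended by $+\infty$ off $Z$ (sublevel sets $\{q\le\lambda\}=\lambda K$ are closed since $K$ is), and the quantitative modulus $\eta=1-\sqrt{1-tc'}$, which legitimately bounds $g\bigl(\tfrac{x+y}{2}\bigr)$ because $g$ is positively homogeneous, so $(1-\eta)C=\{g\le 1-\eta\}$. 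Two harmless housekeeping points: cap $c'$ so that $tc'<1$ (otherwise your estimate forces $g\bigl(\tfrac{x+y}{2}\bigr)=0$, which is even better), and dispose of the degenerate case $K=\{0\}$, where $B=0$, at the outset.
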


\begin{proof} If $Z$ is the linear span of $K$ and $|\cdot |_K$ the Minkowski functional of $K$, then $(Z,|\ |_K)$ is a Banach space (see again \cite[Exercise 2.22]{banach}) and the inclusion of $Z$ into $X$ is a super weakly compact operator. Therefore, there is a renorming $| \cdot |_u$ of $Z$ making this operator uniformly convex. Note that all the norms $| \cdot |_K + \varepsilon |\cdot |_u$ for $\varepsilon>0$ make the operator uniformly convex. Then, for $\varepsilon>0$ small enough, the unit ball $C$ of $| \cdot |_K + \varepsilon |\cdot |_u$ provides the desired set.
\end{proof}

Finally we will prove an intrinsic version of Kadec's theorem \cite{Kadec}.

\begin{prop}
Let $K$ be a uniformly convex set with modulus of convexity $\delta_K$. Then for any finite or infinite sequence $(x_n)$ such that $\sum_n \eps_n x_n \in K$ for any $(\eps_n)_n$ in $\{-1,1\}$, we have that $\sum_n \delta_K(2 \|x_n\|) \leq 1$.
\end{prop}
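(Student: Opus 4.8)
The plan is to reduce the statement to a finite telescoping estimate driven by Lemma \ref{triang}. First I would check that the hypothesis descends to truncations: for every $N$ and every $\eps_1,\dots,\eps_N\in\{-1,1\}$ the partial sum $\sum_{n=1}^N\eps_n x_n$ again lies in $K$. In the infinite case this is because $\sum_{n=1}^N\eps_n x_n$ is the midpoint of the two points $\sum_{n=1}^N\eps_n x_n\pm\sum_{n>N}x_n$, each of which is of the form $\sum_n\eps'_n x_n\in K$ by hypothesis; in the finite case the same conclusion follows by averaging $\sum_n\eps_n x_n$ over the remaining signs and invoking convexity. Hence it suffices to bound $\sum_{n=1}^N\delta_K(2\|x_n\|)$ for each fixed $N$ and then let $N\to\infty$.

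The key device is the monotone quantity
$$a_m=\max\Big\{\big|\textstyle\sum_{n=1}^m\eps_n x_n\big|_K:\ \eps\in\{-1,1\}^m\Big\},\qquad 0\le m\le N,$$
with $a_0=0$. Since each admissible partial sum lies in $K$, we have $a_m\le 1$, and in particular $a_N\le 1$. The heart of the proof is the one-step inequality $a_m\ge a_{m-1}+\delta_K(2\|x_m\|)$. To obtain it, fix signs $\eps_1,\dots,\eps_{m-1}$ and put $u=\sum_{n=1}^{m-1}\eps_n x_n+x_m$ and $v=\sum_{n=1}^{m-1}\eps_n x_n-x_m$. Both $u,v$ belong to $K$ (they are truncated sign-sums of the kind just discussed), their midpoint is $\sum_{n=1}^{m-1}\eps_n x_n$, and $\|u-v\|=2\|x_m\|$, so Lemma \ref{triang} yields
$$\big|\textstyle\sum_{n=1}^{m-1}\eps_n x_n\big|_K\le\max\{|u|_K,|v|_K\}-\delta_K(2\|x_m\|)\le a_m-\delta_K(2\|x_m\|).$$
Taking the maximum over $\eps_1,\dots,\eps_{m-1}$ gives the step inequality.

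Finally I would telescope: summing $a_m-a_{m-1}\ge\delta_K(2\|x_m\|)$ over $1\le m\le N$ gives $\sum_{n=1}^N\delta_K(2\|x_n\|)\le a_N-a_0\le 1$, and letting $N\to\infty$ settles the infinite case as well.

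The only genuinely delicate point is choosing the right quantity to telescope: taking the maximum of the Minkowski functional over all sign patterns of the first $m$ vectors is exactly what makes Lemma \ref{triang} applicable, because flipping the last sign produces two points of $K$ whose midpoint is the previous partial sum and whose mutual distance is $2\|x_m\|$. Once this monotone sequence is in place the rest is routine bookkeeping; the convergence of the infinite sums needed for the truncation step is already built into the hypothesis that $\sum_n\eps_n x_n\in K$ for every sign choice.
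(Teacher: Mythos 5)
Your proposal is correct and follows essentially the same route as the paper's proof: apply Lemma \ref{triang} to the pair $\sum_{n=1}^{m-1}\eps_n x_n \pm x_m$ and telescope the Minkowski gauge of the partial sign-sums up to the bound $1$. The only differences are cosmetic — the paper fixes the signs greedily (``modifying the signs we may suppose'') rather than maximizing over sign patterns as your $a_m$ does, and it leaves implicit both the convexity argument showing that truncated sign-sums lie in $K$ and the $m=1$ case (handled there via $\delta_K(2\|x_1\|)\le |x_1|_K$), which you spell out.
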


\begin{proof}
As above $|\cdot |_K$ stands for the Minkowski functional of $K$.
Modifying the signs we may suppose without loss of generality that
$$ | x_1 +\dots+x_{k-1}-x_k |_K \leq | x_1 +\dots+x_{k-1}+x_k  |_K .$$
By Lemma \ref{triang} we have that for all $k>1$:
$$ \delta_K(2\|x_k\|) \leq   | x_1 +\dots+x_{k-1}+x_k  |_K -  | x_1 +\dots+x_{k-1} |_K.$$
Since we also have that $\delta_K(2\|x_1\|)\le |x_1|_K$, summing up, we obtain
$$ \sum_{n=1}^\infty \delta_K(2\|x_n\|) \leq  | x_1 +\dots+x_{k-1}+x_k  |_K  \leq 1 .$$
\end{proof}

\section{Relation to metric trees, diamonds graphs and Laakso graphs}\label{nonlinear}

The characterization of the non super-reflexivity of a Banach space $X$ by the equi bi-Lipschitz embeddability of the family of binary trees of arbitrary height equipped with the hyperbolic metric, due to J. Bourgain \cite{bourgain}, is one of the milestones of the non linear geometry of Banach spaces. F. Baudier \cite{Baudier} completed this result by showing that it is also equivalent to the Lipschitz embeddability of the infinite binary tree. Then, Johnson and Schechtman \cite{JS} showed that super-reflexivity is also characterized by the non equi bi-Lipschitz embeddability of the diamond graphs or of the Laakso graphs (we refer the reader to \cite{JS} for their precise definitions). In this section we describe the analogous characterization of (non) relative super weak compactness.

In order to illustrate this section, we will only recall the definition of the simplest of these families: the metric binary trees. For $N\in \N$, we denote $T_N=\{\emptyset\}\cup \bigcup_{n=1}^N \{0,1\}^n$. There is a natural order on $T_N$ defined by $s\preceq t$ if the sequence $t$ extends $s$. This allows us to introduce the greatest ancestor of $s$ and $t$ denoted $a_{s,t}$. For $s\in T_N$, we denote $|s|$ the length of $s$. We now define a distance on $T_N$ by the formula
\begin{equation}\label{treemetric}
 d(s,t)=d(a_{s,t},s)+d(a_{s,t},y)=|s|+|t|-2|a_{s,t}|.
\end{equation}
The most natural way to describe this distance is as the graph (or shortest path) metric of $T_N$ equipped with its natural graph structure (two sequences are adjacent if one of them is the immediate predecessor of the other in the ordering $\preceq$). For $s\in T_N$, $s^+$ denotes the set made of the two immediate successors of $s$ for $\preceq$.

We also need to add some notation and terminology. Let $f: (M,d) \rightarrow (X,\|\ \|)$ be a map from a metric space into a Banach space. The average range of $f$ is the following subset of $X$
$$ \mbox{ave}(f) = \left\{ \frac{f(s)-f(t)}{d(s,t)}: s,t \in M, s \not =t \right\} .$$
Obviously a map is Lipschitz if $\mbox{ave}(f)$ is bounded.
We say that $f$ is \emph{$\theta$-separated} if $\|f(s)-f(t)\| \geq \theta \, d(s,t)$, which is equivalent to say that the inverse map is $\theta^{-1}$-Lipschitz. A family of maps from $M$ into $X$ is said \emph{uniformly separated} if they are all  $\theta$-separated for some $\theta>0$.

We can now state our result.

\begin{theo}\label{embeddings} Let $L$ be a bounded subset of a Banach space $(X,\|\ \|)$. Let $(M_N,d_N)_{N\in \Ndb}$ be any one of the following families : binary trees, diamond graphs, Laakso graphs. Then $L$ is not relatively SWC if and only if there exist uniformly separated embeddings $f_N: (M_N,d_N) \rightarrow X$ such that  $\mbox{ave}(f_N)$ is included in $K=\overline{\mbox{aco}}(L)$, the closed absolute convex hull of $L$.
\end{theo}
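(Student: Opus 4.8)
The plan is to strip away the nonconvexity and the metric geometry one layer at a time, and then quote the metric characterization of super weakly compact \emph{operators} of Causey and Dilworth \cite{CauseyDilworth}, which is the convex/operator analogue of the statement we want. First I would reduce to the symmetric convex case by showing that $L$ is not relatively SWC if and only if $K=\overline{\mbox{aco}}(L)$ is not SWC. One implication is monotonicity: if $K$ is SWC then $K$ is weakly compact and $L^{\mathcal U}\subset K^{\mathcal U}$ is relatively weakly compact, so $L$ is relatively SWC. For the converse, if $L$ is relatively SWC then so is $-L$, and since $(A\cup B)^{\mathcal U}=A^{\mathcal U}\cup B^{\mathcal U}$ the set $L\cup(-L)$ is relatively SWC; Theorem \ref{KST} then yields that $K=\overline{\mbox{conv}}(L\cup(-L))$ is SWC. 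Thus it suffices to characterize the failure of super weak compactness of the symmetric convex body $K$.

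Next I would realize $K$ as the unit ball of an operator. Let $Z$ be the linear span of $K$ equipped with the Minkowski functional $|\cdot|_K$; then $(Z,|\cdot|_K)$ is a Banach space with $B_Z=K$ (using that $K$ is $\|\cdot\|$-closed, see \cite[Exercise 2.22]{banach}), and the inclusion $J:Z\to X$ is bounded. Because $(J(B_Z))^{\mathcal U}=J^{\mathcal U}(B_{Z^{\mathcal U}})$ and $B_{Z^{\mathcal U}}=(B_Z)^{\mathcal U}$, the set $K=J(B_Z)$ is SWC exactly when $J^{\mathcal U}$ is weakly compact, i.e. when $J$ is a super weakly compact operator; this is the content underlying statement $(iv)$ of Proposition \ref{superequival} and Theorem \ref{interpol}. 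Hence $L$ is not relatively SWC if and only if $J$ is not super weakly compact.

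Finally I would invoke Causey and Dilworth: for each of the three families, $J$ fails to be super weakly compact precisely when there are $\theta>0$ and maps $f_N:M_N\to Z$ with $\mbox{ave}(f_N)\subset B_Z$ and $\|Jf_N(s)-Jf_N(t)\|\ge\theta\, d_N(s,t)$ for all $s,t$. Reading this through the inclusion $J$ and the identity $B_Z=K$ is exactly the assertion that the $f_N$, viewed as maps into $X$, are $\theta$-separated with $\mbox{ave}(f_N)\subset K$, giving one direction. For the reverse direction one lifts a family $f_N:M_N\to X$ with $\mbox{ave}(f_N)\subset K$ back into $Z$: after translating the root to $0$, each difference $f_N(s)-f_N(t)$ lies in $d_N(s,t)\,K\subset Z$, so $f_N$ actually takes values in $Z$, is $1$-Lipschitz for $|\cdot|_K$ (that is, $\mbox{ave}(f_N)\subset B_Z$), and remains $\theta$-separated in $X$, which is once more the Causey--Dilworth criterion for $J$ not to be SWC.

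The main obstacle I expect is the bookkeeping of \emph{two different norms}: the separation (lower bound) is measured in the ambient norm $\|\cdot\|$ of $X$, while the Lipschitz (upper bound) constraint $\mbox{ave}(f_N)\subset B_Z$ lives in the gauge $|\cdot|_K$, and one must quote Causey--Dilworth in exactly the operator form that constrains the increments to lie in $K$ rather than merely be bounded. The genuinely delicate points are verifying $B_Z=K$ and that $J$ being super weakly compact is equivalent to $K$ being SWC; everything else is the clean dictionary between the operator picture and the set picture.
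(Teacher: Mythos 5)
Your proposal is correct and follows essentially the same route as the paper's own proof: reduce via Tu's Theorem \ref{KST} to the super weak compactness of $K=\overline{\mbox{aco}}(L)$, realize $K$ as the unit ball of $(Z,|\cdot|_K)$ with inclusion operator into $X$, and quote the Causey--Dilworth metric characterization of super weakly compact operators with the Lipschitz bound in the gauge $|\cdot|_K$ and the separation in $\|\cdot\|$. You merely make explicit some steps the paper leaves implicit (the symmetrization $L\cup(-L)$, the identity $B_{Z^{\mathcal U}}=(B_Z)^{\mathcal U}$, and the translation of the root when lifting maps back into $Z$), all of which are sound; the paper additionally gives a second, self-contained proof for trees via James-type sequences and Kloeckner's fork argument, which you did not need.
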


\begin{proof} We know from Theorem \ref{KST} that $L$ is not relatively SWC if and only if $K$ is not SWC. Denote also $Z$ the linear span of $K$ and $T$ the identity from $(Z,|\ |_K)$ to $(X,\|\ \|)$. We may as well assume that $L\subset B_X$ and thus that $\|T\|\le 1$. Note now that $K$ is not SWC if and only if the operator $T$ is not SWC. Although it is not exactly stated in these terms, it follows from the work of Causey and Dilworth \cite{CauseyDilworth} that $T$ is not SWC if and only if there exists $\theta>0$ and maps $f_N: M_N \to Z$ such that
\begin{enumerate}[(i)]
\item For all $s,t\in M_N$, $|f_N(s)-f_N(t)|_K\le d_N(s,t)$.
\item For all $s,t\in M_N$, $\|f_N(s)-f_N(t)\|\ge \theta d_N(s,t)$.
\end{enumerate}
This concludes the proof.
\end{proof}

\begin{rema} The above results apply to unit balls of Banach spaces, which allows to recover Bourgain's theorem. Note that in this generalization it is very important that the characterization is given in terms of the norm of the ambient space for the separation and in terms of the Minkowski functional of $K$ for the Lipschitz constant.
\end{rema}

\begin{rema} For a very general approach, we refer the reader to the recent paper by A. Swift \cite{Swift}, where it is shown (Theorem 6.7) that super-reflexivity is equivalent to the non equi-Lipschitz embeddability of any family of bundle graphs generated by a given finitely branching bundle graph.
\end{rema}

The article by Causey and Dilworth \cite{CauseyDilworth} is written in terms of super weakly compact operators and applies to symmetric convex sets. We include below a proof, for the case of trees, using only the tools of our paper.

\begin{proof}[Proof of Theorem \ref{embeddings}] Assume that $L$ is not relatively SWC. Then $K$ is not SWC and there exists $\theta>0$ such that for every $N \in {\Bbb N}$ there exists $(x_k)_{k=1}^{2^{N+1}-1}$ in $K$ and $(x^*_k)_{k=1}^{2^{N+1}-1}$ in $B_{X^*}$ satisfying condition (ii) in Theorem \ref{james_sty}. Bourgain's map \cite{bourgain}, see also \cite[Lemma 13.11]{Pisier}, defined by
$$ f(s) = \sum_{t \preceq s} x_{\sigma(t)} ,$$
where $\sigma: T_N \rightarrow \{1,\dots,2^{N+1}-1\}$ is a suitable labelling of the nodes of the tree (see \cite{bourgain} or \cite{Baudier} for details), is clearly $\theta$-separated and its average range is in $K$.

Assume now that $L$ is relatively SWC, and thus, by Theorem \ref{KST} that $K$ is SWC. Aiming for a contradiction, assume also that there exists $f:T_N\to X$ which is $\theta$- separated with $\mbox{ave}(f) \subset K$. Since $K$ is SWC, we may assume, without loss of generality that $K$ is uniformly convex with modulus $\delta$. Let $|\ |_K$ be the Minkowski functional of $K$ and notice that $f$ is $C$-Lipschitz for some $C \leq 1$ if we endow $K$ with $|\ |_K$.
Now we will show that Kloeckner's fork argument \cite{Kloeckner} is valid in this context. Given nodes $s_0,s_1,s_2,s_2'$ such that $s_1 \in s_0^+$ and $\{s_2,s_2'\} = s_1^+$, we claim that
$$ \min\{ |f(s_0) - f(s_2)|_K, |f(s_0) - f(s_2')|_K \} \leq 2(C-\delta(\theta)).$$
Indeed, assume not and set $x = f(s_0) - f(s_1)$, $y=f(s_1) - f(s_2)$ and
$y'=f(s_1) - f(s_2')$ which all are in $K$ and so that  $|x|_K,|y|_K,|y'|_K\leq C$. Then we have
$$\big|\frac{x+y}{2}\big|_K > C-\delta(\theta)\ \  \text{and}\ \  \big|\frac{x+y'}{2}\big|_K  > C-\delta(\theta),$$
which imply $\|x-y\|<\theta$ and $\|x-y'\|<\theta$, and therefore
$$ \|f(s_2) - f(s_2')\| = \|y-y'\|<2\theta$$
contradicting the $\theta$-separation and proving our claim.\\
If $N$ was even, the application of this argument would provide a selection of nodes equivalent to $T_{\frac{N}{2}}$, on which the restriction of $f$ is $\theta$-separated with respect to $\| \ \|$ and the Lipschitz constant has been reduced to $C-\delta(\theta)$ with respect to $|\ |_K$. Starting with a tree of height $N=2^{k+1}$, the recursive application of this argument would provide a $\theta$-separated map from $T_2$ with Lipschitz constant $C-k\delta(\theta)$, which is impossible for large values of $k$.
\end{proof}

We conclude this section by showing a metrical characterization of super weak compactness that is the exact analogue of Baudier's characterization of super-reflexive Banach spaces \cite{Baudier}. Let us denote $T_\infty$ the union of all the $T_N$'s for $N\in \N$, that we equip with the distance $d$ defined as in (\ref{treemetric}).

\begin{theo}\label{Baudier} Let $L$ be a bounded subset of a Banach space $(X,\|\ \|)$. Then $L$ is not relatively SWC if and only if there exist $\theta>0$ and a $\theta$-separated map  $f: (T_\infty,d) \rightarrow X$ such that  $\mbox{ave}(f)$ is included in $K=\overline{\mbox{aco}}(L)$, the closed absolute convex hull of $L$.
\end{theo}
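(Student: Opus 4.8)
The plan is to reduce the statement for the single infinite tree $T_\infty$ to the already-established characterization for the finite trees $T_N$ given in Theorem \ref{embeddings}, exactly as Baudier's theorem reduces the Lipschitz embeddability of the infinite binary tree to the equi-bi-Lipschitz embeddability of the finite ones. One direction is trivial: if such a $\theta$-separated $f:T_\infty\to X$ with $\mbox{ave}(f)\subset K$ exists, then restricting $f$ to each $T_N\subset T_\infty$ yields uniformly separated embeddings $f_N=f|_{T_N}$ (the separation constant $\theta$ is inherited, and $\mbox{ave}(f_N)\subset\mbox{ave}(f)\subset K$), so Theorem \ref{embeddings} forces $L$ to be non relatively SWC. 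Hence the content is the forward implication: from non relative super weak compactness of $L$ produce a \emph{single} map on all of $T_\infty$ that is simultaneously $\theta$-separated and has average range in $K$.

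For the forward direction I would invoke Theorem \ref{embeddings} to obtain, for each $N$, a map $g_N:T_N\to X$ that is $\theta$-separated with $\mbox{ave}(g_N)\subset K$, with $\theta>0$ independent of $N$. The goal is to glue these into one map on $T_\infty=\bigcup_N T_N$. The natural device is to realize each finite tree as a subtree of $T_\infty$ in a telescoping fashion and to control the images so that the pieces fit together with uniformly bounded overlap. Concretely, I would follow Baudier's self-similar/gluing scheme: identify $T_\infty$ as an increasing union of copies of the finite trees sitting at successively deeper levels, and use the fact that the explicit Bourgain map $f(s)=\sum_{t\preceq s}x_{\sigma(t)}$ used in the proof of Theorem \ref{embeddings} is built by summing increments indexed along branches. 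Because this map is \emph{additive along the tree order}, the image of a node deep in $T_\infty$ can be taken to be the image under the local Bourgain map of the corresponding node in a finite subtree, translated by the accumulated sum over the path leading to the root of that subtree. The Lipschitz (average-range) bound is preserved because each increment is of the form $x_k\in K$ and $K=\overline{\mbox{aco}}(L)$ is absolutely convex; the $\theta$-separation is preserved because within each glued block it holds by Theorem \ref{embeddings}, and between blocks the graph distance in $T_\infty$ is large, so the lower bound $\|f(s)-f(t)\|\ge\theta\,d(s,t)$ can be maintained by a standard Baudier-type estimate comparing $d$-distance to the block structure.

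The main obstacle, as always in passing from finite to infinite trees, is the separation estimate \emph{across different blocks}: two nodes $s,t$ lying in different finite subtrees have a large graph distance $d(s,t)=|s|+|t|-2|a_{s,t}|$, and one must guarantee $\|f(s)-f(t)\|\ge\theta\,d(s,t)$ even though the contributions from the two branches are only loosely coupled. I would handle this by arranging the branch increments so that the partial sums along any path are $\theta$-separated from one another — this is precisely the role of the James functionals $(x_k^*)$ from Theorem \ref{james_sty}: testing against the appropriate $x_k^*$ detects the divergence point $a_{s,t}$ of the two branches and certifies the lower bound, with the telescoping $x_n^*(x_k)=\theta$ for $n\le k$ controlling exactly how much separation accumulates per level. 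The Lipschitz bound in the Minkowski functional $|\ |_K$, by contrast, is the easy part, since it follows from absolute convexity of $K$ and the triangle inequality exactly as in the finite case. Once the cross-block separation is secured, the verification that $\mbox{ave}(f)\subset K$ and that $f$ is globally $\theta$-separated is routine, and the theorem follows.
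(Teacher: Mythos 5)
Your reduction is the right one, and your easy direction (restricting $f$ to each $T_N\subset T_\infty$ and invoking Theorem \ref{embeddings}) is exactly the paper's. The forward direction, however, has a genuine gap at precisely the point you yourself flag as the main obstacle. You propose to secure cross-block separation by ``testing against the appropriate $x_k^*$'' coming from Theorem \ref{james_sty}. But Theorem \ref{james_sty}, applied separately for each $N$, produces families $(x_k)\subset K$ and $(x_k^*)\subset B_{X^*}$ that are completely unrelated for different values of $N$: a functional from the family of one block has uncontrolled action on the increments of every other block, so the telescoping lower bound you describe is valid only \emph{within} a single block and collapses as soon as $s$ and $t$ involve increments from two different finite-tree embeddings. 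The missing ingredient --- which the paper proved in advance precisely for this purpose --- is Theorem \ref{SWCcodim}: if $K$ is not SWCC, then for \emph{every} finite codimensional subspace $Y$ of $X$ one can find James configurations inside $K\cap Y$ with a separation constant $\theta$ independent of $Y$. This is what licenses the Mazur gliding hump construction in the paper's proof, producing points $x_{n,1},\dots,x_{n,k_n}\in K\cap F_n$ where the subspaces $(F_n)$ form a Schauder decomposition of their closed linear span; only then do the functionals of block $n$ essentially annihilate the vectors of later blocks, and the cross-block estimates of Baudier's argument go through. Note that Theorem \ref{SWCcodim} is not a formal consequence of Theorem \ref{james_sty}: the paper derives it from Proposition \ref{Jamescodim}, whose proof needs a genuinely separate argument (Bartle--Graves selection plus a compactness claim). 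Your proposal never produces, nor even asks for, James sequences located in prescribed finite codimensional subspaces, so the gluing cannot be carried out as you describe it.

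A secondary deviation: the paper does not glue by translating local Bourgain maps along the path to a block root. Following Baudier, it defines $f_n(s)=\sum_{t\preceq s}x_{n,\sigma_n(t)}$ on all of $T_{2^{n+1}}$ and interpolates barycentrically, setting $f(s)=\lambda f_n(s)+(1-\lambda)f_{n+1}(s)$ with $\lambda=(2^{n+1}-|s|)/2^n$ for $2^n\le|s|\le 2^{n+1}$. This overlap of consecutive scales is what makes the Lipschitz bound (in $|\cdot|_K$) and the separation estimates of \cite{Baudier} directly quotable; a translation-based partition into disjoint slabs would force you to reprove those estimates from scratch, and it would in any case still hinge on the Schauder decomposition, hence on Theorem \ref{SWCcodim}, which your argument lacks.
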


\begin{proof} It follows from Theorem \ref{embeddings} that we only need to show one implication. So assume that $L$ is not relatively SWC and let us build an embedding $f$. Armed with Theorem \ref{SWCcodim}, we only need to reproduce Baudier's original barycentric gluing argument \cite{Baudier}. So we will just recall the main steps of his construction.

Denote $k_n$ the cardinality of $T_{2^{n+1}}$. Then we can build inductively, using Theorem \ref{SWCcodim} and a standard Mazur gliding hump argument, subspaces $(F_n)_{n=1}^\infty$ of $X$, points $x_{n,1},\ldots,x_{n,k_n}$ in $K\cap F_n$, linear functionals $x_{n,1}^*,\ldots,x_{n,k_n}^*$ in $B_{X^*}$ so that $x_{n,k}^*(x_{n,i})=0$ if $k>i$ and $x_{n,k}^*(x_{n,i})=\theta$ if $k\le i$, for some fixed $\theta >0$. We also make sure in the construction that $(F_n)_{n=1}^\infty$ is a Schauder decomposition of its closed linear span $Z$. Let now $\sigma_n:T_{2^{n+1}}\to \{1,\ldots,k_n\}$ be an enumeration of $T_{2^{n+1}}$ following the lexicographic order. Now define
$$f_n(\emptyset)=0\ \text{\ and\ }\forall s \in T_{2^{n+1}}\setminus \{\emptyset\},\ \ f_n(s) = \sum_{t \preceq s} x_{n,\sigma_n(t)}.$$
Finally, still following Baudier's lead, we define $f:T_\infty \to Z\subset X$ as follows: $f(\emptyset)=0$ and if $2^n\le |s| \le 2^{n+1}$, for some $n\in \N\cup\{0\}$, then
$$f(s)=\lambda f_n(s)+(1-\lambda)f_{n+1}(s),\ \text{ where }\ \lambda=\frac{2^{n+1}-|s|}{2^n}.$$
We have now gathered all the ingredients to follow the estimates carried out in \cite{Baudier} and conclude that there exists $C,\eta >0$ such that $f$ is $C$-Lipschitz as a function with values in the linear span of $K$ equipped with $|\ |_K$ and $\eta$-separated. A final rescaling of $f$ by a factor $\frac1C$ yields the conclusion.
\end{proof}

\section{Examples of super weakly compact sets}\label{examples}

Here we will discuss some examples that we believe to be interesting.

\begin{prop}\label{lone}
For any measure space $(\Omega, \Sigma, \mu)$ and for any compact Hausdorff space $K$ all the weakly compact subsets of $L^1(\Omega,\mu)$ and $C(K)^*$ are SWC. Therefore, any weakly compact operator with range $L^1(\mu)$ or domain $C(K)$ is super weakly compact.
\end{prop}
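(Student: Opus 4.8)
The plan is to prove that every weakly compact subset of $L^1(\mu)$ is SWC, then transfer this to $C(K)^*$ by duality, and finally deduce the operator statement via Theorem \ref{interpol}. First I would recall the classical characterization of relative weak compactness in $L^1(\mu)$: by the Dunford-Pettis theorem, a bounded set $W \subset L^1(\mu)$ is relatively weakly compact if and only if it is \emph{uniformly integrable}, meaning $\sup_{f \in W} \int_{\{|f|>M\}} |f|\, d\mu \to 0$ as $M \to \infty$ (together with, in the non-finite case, uniform tightness in the measure-space sense). The key insight is that uniform integrability is a \emph{finitely determined} property, so it should pass to the ultrapower: if $W$ is uniformly integrable, one expects $W^{\mathcal U}$ to be relatively weakly compact in $(L^1(\mu))^{\mathcal U}$.

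The central device I would use is the classical identification of the ultrapower $(L^1(\mu))^{\mathcal U}$ as (isometrically a band in) an $L^1$-space over a larger measure space, say $L^1(\tilde\mu)$; this is a standard fact about ultrapowers of $L^1$-spaces (the ultrapower of an abstract $L$-space is again an $L$-space, hence an $L^1(\tilde\mu)$ by Kakutani's representation theorem). Granting this, the natural strategy is to show that the uniform integrability of $W$ forces the uniform integrability of $W^{\mathcal U}$ inside $L^1(\tilde\mu)$, and then invoke Dunford-Pettis again in $L^1(\tilde\mu)$ to conclude that $W^{\mathcal U}$ is relatively weakly compact. Concretely, given $\eps>0$ I would choose $M$ so that $\sup_{f\in W}\int_{\{|f|>M\}}|f|\,d\mu<\eps$; then for a class $(f_i)_{\mathcal U}$ with each $f_i\in W$, the truncations $f_i\wedge M$ control the mass of the ultraproduct away from its ``bounded part'', yielding the same $\eps$-bound in $L^1(\tilde\mu)$ and hence uniform integrability of $W^{\mathcal U}$.

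For the $C(K)^*$ case I would recall that $C(K)^*$ is itself an abstract $L$-space (it is isometric to $L^1(\lambda)$ for a suitable, possibly non-finite, measure $\lambda$, by the representation of $C(K)^*$ as a space of regular Borel measures with the total variation norm, which is an $L$-space). Thus the $L^1$ argument applies verbatim: weakly compact subsets of an $L$-space are uniformly integrable in the appropriate abstract sense, and this passes to the ultrapower by the same truncation argument. Finally, for the operator statement, if $R:Y\to L^1(\mu)$ is weakly compact then $R(B_Y)$ has relatively weakly compact, hence SWC, closure; by Theorem \ref{interpol} this forces $R$ to factor as stated and be super weakly compact. Dually, if $S:C(K)\to Y$ is weakly compact then $S^*:Y^*\to C(K)^*$ is weakly compact with SWC range, so $S^*$ is SWC, and super weak compactness of an operator is self-dual (an operator is SWC iff its adjoint is), giving the claim for $S$.

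The main obstacle I anticipate is the careful handling of the ultrapower representation in the non-$\sigma$-finite and non-finite setting: the identification of $(L^1(\mu))^{\mathcal U}$ with an honest $L^1(\tilde\mu)$ and the correct formulation of uniform integrability (including tightness) require some care, and one must ensure the truncation estimate survives passage to the quotient by the null-limit ideal. A cleaner alternative that sidesteps the representation theorem would be to verify the Dunford-Pettis criterion \emph{intrinsically} in the ultrapower using only the $L$-space structure (additivity of the norm on disjoint positive elements), which is preserved under ultrapowers; this abstract-$L$-space route handles $L^1(\mu)$ and $C(K)^*$ simultaneously and is likely the most economical way to organize the argument.
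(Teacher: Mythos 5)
Your proposal is correct in substance but follows a genuinely different route from the paper's. The paper reduces to separable weakly compact sets (super weak compactness being separably determined), notes that such a set lies in $L^1(\nu)$ with $\nu$ a finite measure, and then invokes the fact that $L^1(\nu)$ is strongly generated by the unit ball of the Hilbert space $L^2(\nu)$, so that every weakly compact subset is SWC by \cite[Proposition 2.7 (5)]{raja2}; for $C(K)^*$ it uses the decomposition as an $\ell_1$-sum of $L^1(\mu)$ spaces rather than Kakutani's representation. You instead verify Definition \ref{defsuper} directly in the ultrapower: weak compactness yields almost order boundedness (the correct general form of Dunford--Pettis for arbitrary $\mu$: for each $\eps>0$ there is $g\in L^1(\mu)^+$ with $\sup_{f\in W}\|(|f|-g)^+\|_1<\eps$; note your truncation at a constant $M$ does fail for infinite measure, as you anticipate), this passes verbatim to the ultrapower with the constant class of $g$, since lattice operations are computed coordinatewise and the norm of a class is the limit of norms, and the ultrapower of an AL-space is again an AL-space, where order intervals are weakly compact; Grothendieck's lemma (a set within $\eps$ of a weakly compact set for every $\eps>0$ is relatively weakly compact) then concludes. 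Your abstract-$L$-space formulation genuinely handles $L^1(\mu)$ and $C(K)^*$ in one stroke and is more self-contained, whereas the paper's route is shorter given the machinery of \cite{raja2} and \cite{beau1} and ties the proposition to the S$^2$WCG theme of section \ref{RemoveConvex}. One small repair: for the operator statement with range $L^1(\mu)$ you do not need Theorem \ref{interpol} at all; by definition $R$ is super weakly compact as soon as $R(B_Y)$ is relatively SWC, since $R^{\mathcal U}(B_{Y^{\mathcal U}})\subset (1+\eps)(R(B_Y))^{\mathcal U}$, while your duality step for $C(K)$ (Gantmacher plus the self-duality of super weak compactness of operators, due to Beauzamy) is exactly the paper's argument.
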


\begin{proof} We may proceed only with separable weakly compact subsets of $L^1(\Omega, \mu)$ as super weak compactness is separably determined. Note that any separable subset of $L^1(\Omega, \mu)$ is supported on a $\sigma$-finite set $\Omega'$.
On the other hand $L^1(\Omega',\mu)$ is isometric to some space  $L^1(\nu)$ where $\nu$ is a finite measure. As $L^1(\nu)$ is strongly generated by the unit ball of $L^2(\nu)$, we deduce that  any weakly compact subset of $L^1(\Omega, \mu)$ is SWC, see \cite[p. 123]{beau1} or \cite[Proposition 2.7 (5)]{raja2}. The proof for $C(K)^*$ is similar. Note that this space can be decomposed as an $\ell_1$ sum of $L^1(\mu)$ spaces.

The consequence for weakly compact operators with range in an $L^1(\mu)$ space is evident. Let now $T: C(K) \rightarrow X$ be a weakly compact operator. Then $T^*: X^* \rightarrow C(K)^*$ is weakly compact too, and thus it is super weakly compact. Then $T$ is again super weakly compact by Beauzamy's duality.
\end{proof}

The next statement then follows straightforwardly from the application of a well known result of Pe\l cy\'nski, see \cite[Corollary 5.6.4]{albiackalton} for instance.

\begin{coro}
Every operator from a $C(K)$ space into a Banach space which contains no copy of $c_0$ is super weakly compact.
\end{coro}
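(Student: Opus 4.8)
The plan is to combine Proposition \ref{lone} with the classical dichotomy of Pe\l czy\'nski for operators defined on $C(K)$ spaces. Recall that Pe\l czy\'nski's theorem (in the form of \cite[Corollary 5.6.4]{albiackalton}) states that an operator $T:C(K)\to X$ is either weakly compact or else it fixes a copy of $c_0$; that is, there exists a subspace of $C(K)$ isomorphic to $c_0$ on which $T$ is an isomorphism onto its image. In particular, if $T$ is not weakly compact, then $X$ contains an isomorphic copy of $c_0$ (namely $T$ restricted to the fixed copy provides an embedding of $c_0$ into $X$).

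The argument is then immediate by contraposition. First I would let $T:C(K)\to X$ be an operator into a Banach space $X$ containing no copy of $c_0$. Since $X$ contains no copy of $c_0$, the operator $T$ cannot fix a copy of $c_0$, as that would embed $c_0$ isomorphically into $X$. By the Pe\l czy\'nski dichotomy just recalled, the only remaining possibility is that $T$ is weakly compact. Finally I would invoke Proposition \ref{lone}, which asserts precisely that any weakly compact operator with domain $C(K)$ is super weakly compact. Hence $T$ is super weakly compact, as desired.

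Since both ingredients are already available, there is essentially no obstacle here: the statement is a one-line deduction once Proposition \ref{lone} and Pe\l czy\'nski's result are in hand. The only point demanding a small amount of care is the logical direction of the dichotomy. One must use the contrapositive form correctly: it is the failure of weak compactness that forces a fixed copy of $c_0$ (and thus a copy of $c_0$ in the range $X$), so that the absence of $c_0$ in $X$ rules out non-weak-compactness rather than directly producing it. This is precisely what makes the hypothesis ``$X$ contains no copy of $c_0$'' the right one to quote, and it is the form in which \cite[Corollary 5.6.4]{albiackalton} is stated.
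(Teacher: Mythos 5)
Your proposal is correct and follows exactly the route the paper intends: the corollary is stated there as an immediate consequence of Pe\l czy\'nski's theorem (cited as \cite[Corollary 5.6.4]{albiackalton}) combined with Proposition \ref{lone}, which is precisely your contrapositive argument. Your extra care about the logical direction of the dichotomy is sound but adds nothing beyond the paper's one-line deduction.
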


Note that the $C(K)$ case in Proposition \ref{lone} includes in particular $L^\infty(\mu)$ spaces, see \cite[Lemma 5.3]{rodriguez} for a result is this direction. The particular properties of $L^1(\mu)$ as a Banach lattice may suggest a possible generalization of Proposition \ref{lone} in this setting. Actually, we propose here an upgrade of the above result that is rather based on its algebraic structure. We shall deal with preduals of JBW$^*$-triples, which include in particular preduals of Von Neumann algebras and thus the complex $L^1(\mu)$  spaces. We are grateful to Ond\v{r}ej F.K. Kalenda who kindly provided us with the following result and the arguments for the proof below. The definition of a JBW$^*$-triple and basic related information can be found for instance in the papers \cite{BHKPP,HKPP}.

\begin{theo}\label{JBW}
Every weakly compact subset of a JBW$^*$-triple predual is SWC.
\end{theo}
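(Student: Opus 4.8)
The plan is to reduce the statement to a structural fact about JBW$^*$-triple preduals, namely that they admit a decomposition into pieces that look enough like the $L^1(\mu)$ and $C(K)^*$ cases already handled in Proposition \ref{lone}. Since super weak compactness is separably determined (as recalled after Definition \ref{defsuper}), I would first reduce to a separable weakly compact subset $W$ of the predual $M_*$ of a JBW$^*$-triple $M$. The key is that weak compactness forces $W$ to live, up to arbitrarily small perturbation, inside a well-behaved ``small'' part of $M_*$: concretely, a separable weakly compact set should be supported by a $\sigma$-finite tripotent, reducing the analysis to the predual of a $\sigma$-finite JBW$^*$-triple, which is the natural analogue of passing from a general measure to a finite measure in the $L^1$ case.

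The central structural input I would invoke is the classification/decomposition theory for JBW$^*$-triples: such a triple splits as a direct sum of a finite-dimensional part, spin factors, and Cartan factors of the various types, together with pieces of the form $C(K,\mathbb{C})$-modules and von Neumann algebra type summands. On the predual level this yields an $\ell_1$-type decomposition whose summands are, in the relevant cases, isometric to (complex) $L^1(\mu)$ spaces or to preduals of von Neumann algebras, together with small finite-dimensional or Hilbertian contributions. The finite-dimensional and spin-factor (Hilbertian) pieces are trivially SWC since their unit balls are uniformly convex; the von Neumann algebra preduals reduce, via the $\ell_1$-sum observation already used for $C(K)^*$ in Proposition \ref{lone}, to the $L^1$ case. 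Thus I would assemble super weak compactness out of these components using the fact that an $\ell_1$-sum of SWC behaviour, intersected with a weakly compact set, remains SWC — here one wants a lemma saying that a weakly compact set whose projections onto each summand of an $\ell_1$-decomposition are SWC (with uniform control) is itself SWC.

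The step I expect to be the genuine obstacle is this \emph{uniform assembly across the $\ell_1$-decomposition}. A weakly compact subset of an $\ell_1$-sum $\bigoplus_1 E_n$ is automatically uniformly integrable in the sense that the tails $\sum_{n\ge m}$ are uniformly small over the set; this uniform integrability is exactly what should let me approximate $W$ by sets living in finitely many summands and then conclude. Making this precise in the triple setting, where the ``coordinates'' are not genuine $\ell_1$-coordinates but arise from an abstract decomposition of the triple, and verifying that the relevant tripotent/support machinery delivers the needed uniformity, is where the real work lies. I would lean on Theorem \ref{interpol} to phrase the conclusion operator-theoretically: it suffices to exhibit a reflexive $Z$ and a super weakly compact operator $T\colon Z\to M_*$ with $W\subset T(B_Z)$, and such a $T$ can be built as an $\ell_1$-sum of the operators furnished by the individual summands provided their norms and moduli are controlled uniformly, which the uniform integrability of the weakly compact set $W$ should guarantee.
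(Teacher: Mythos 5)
There is a genuine gap, and it sits exactly at the noncommutative heart of the theorem. Your plan disposes of the von Neumann algebra predual summands by claiming they reduce, ``via the $\ell_1$-sum observation already used for $C(K)^*$ in Proposition \ref{lone}, to the $L^1$ case.'' This is false. The $C(K)^*$ case works because $C(K)^*$ is an abstract $L$-space and hence, by Kakutani's representation, genuinely decomposes as an $\ell_1$-sum of commutative $L^1(\mu)$ spaces; a general von Neumann algebra predual admits no such decomposition. For instance the trace class $S_1(H)$, the predual of $B(H)$, is not an $\ell_1$-sum of $L^1(\mu)$ spaces and does not even embed into any $L^1(\mu)$, so no amount of coordinate-wise assembly brings you back to Proposition \ref{lone}. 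The same problem recurs in the other summands of the Horn--Ruan type decomposition you invoke: the pieces of the form $L^1(\Omega,C_*)$ with $C$ an infinite-dimensional Cartan factor have $C_*$ non-super-reflexive (e.g.\ $C_*$ a trace-class type space), so the known result that weakly compact subsets of $L^1(\mu,X)$ are SWC for $X$ super-reflexive does not apply either. In short, proving that weakly compact subsets of noncommutative $L^1$ spaces are SWC is the actual content of Theorem \ref{JBW}, not something that can be quoted from the commutative case; your ``uniform assembly across the $\ell_1$-decomposition,'' which you flag as the main obstacle, is in fact the unproblematic part (small tails of weakly compact sets in $\ell_1$-sums are standard), while the step you treat as routine is where the theorem lives.

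The paper's proof avoids classification and decomposition theory altogether, and it is worth seeing why, because it supplies precisely the ingredient your sketch lacks. After the same separable reduction, it uses that JBW$^*$-triple preduals are $1$-Plichko and hence have the $1$-separable complementation property (Bohata--Hamhalter--Kalenda--Peralta--Pfitzner), so the separable weakly compact set sits inside a separable $1$-complemented subspace $F$; by Kaup's theorem on contractive projections, $F^*$ is again a JB$^*$-triple, so $F$ stays in the category. Then the key noncommutative input is the result of Hamhalter--Kalenda--Peralta--Pfitzner that a separable JBW$^*$-triple predual is \emph{strongly} generated by a weakly compact set which factors through a Hilbert space (the completion of the triple under the inner product $[x,y]=\phi(\{x,y,e\})$) --- the exact analogue of the strong generation of $L^1(\nu)$ by $B_{L^2(\nu)}$ used in Proposition \ref{lone}, but established by triple-theoretic means. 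Strong generation by a SWC set then makes every weakly compact subset SWC. This Hilbert-space factorization is the substitute for your attempted reduction to commutative $L^1$, and without it (or an equivalent, such as a noncommutative Davis--Figiel--Johnson--Pe\l czy\'nski/Haagerup-type factorization for the summands $pM$ and $H(M,\beta)$) your argument cannot be completed.
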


\begin{proof}
Let us start by noticing that any JBW*-triple $E$ has a unique predual  \cite{BarTi}. As in the proof of the previous result we may suppose that the weakly compact set $K \subset E$ is separable. By \cite[Theorem 1.1]{BHKPP} $E$ is 1-Plichko. In particular, it has 1-SCP \cite[Corollary 1.3]{BHKPP}, that is, every separable subspace is contained in a $1$-complemented separable subspace, so we may assume that $K \subset F$ where $F$ is $1$-complemented in $E$. Now, we claim that $F^*$ is a JBW$^*$-triple. Indeed, by \cite{Kaup} a 1-complemented subspace of a JB$^*$-triple is again a JB$^*$-triple and the claim follows by duality.
Since $F$ is a separable predual of a JBW$^*$-triple, it is strongly WCG \cite[Corollary 9.4]{HKPP}. It remains to show that $F$ is actually strongly generated by a SWC set which would imply that $K$ is SWC by \cite[Proposition 2.7 (5)]{raja2}. In order to do that it is necessary to look into the extra information provided by the proofs in \cite{HKPP}.
The compact $K(\phi)$ that strongly generates $F$ in \cite[Theorem 9.3(c)]{HKPP} (see also \cite[Proposition 7.11(b)]{HKPP}) comes from
a Hilbert space. Indeed, $K(\phi)$  is defined in \cite[Lemma 7.10(b)]{HKPP} and it follows from
the formula for $\Phi$ in \cite[Lemma 7.10(a)]{HKPP} that it factors through a Hilbert space, which is the completion of $M$ endowed with the inner
product $[x,y]=\phi(\{x,y,e\})$, see \cite[Proposition 3.2]{BHKPP} and its proof for additional details.
\end{proof}

We can then deduce, and extend, the following classical result of W. Szlenk \cite{Szlenk}.

\begin{coro}
A weakly convergent sequence in $L^1(\mu)$, or more generally in a JBW$^*$-triple predual, has a subsequence whose Ces\`{a}ro mean converges in norm to the same limit.
\end{coro}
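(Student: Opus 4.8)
The plan is to reduce the statement to the Banach-Saks property of the relevant weakly compact set, which is made available precisely by the super weak compactness established above. First I would let $(x_n)$ be a sequence in the JBW$^*$-triple predual $E$ (or in $L^1(\mu)$) converging weakly to some $x$, and form the set $K=\{x_n: n\in\N\}\cup\{x\}$. Since a weakly convergent sequence together with its limit is weakly compact, $K$ is a weakly compact subset of $E$. By Theorem \ref{JBW} (respectively Proposition \ref{lone} in the case of $L^1(\mu)$), $K$ is therefore SWC.

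The second step is to invoke Corollary \ref{applicationKST}(b), which guarantees that every SWC set enjoys the Banach-Saks property: every sequence lying in $K$ admits a subsequence whose Ces\`aro means converge in norm. Applying this to the sequence $(x_n)\subset K$ produces a subsequence $(x_{n_k})$ and a vector $y\in E$ such that $\frac1N\sum_{k=1}^N x_{n_k}$ converges in norm to $y$.

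It then remains to identify $y$ with $x$. Since $(x_{n_k})$ still converges weakly to $x$, its Ces\`aro means converge weakly to $x$ as well; as norm convergence implies weak convergence, the weak limit $y$ of the Ces\`aro means must coincide with $x$. This yields exactly the desired conclusion, and the passage from a general JBW$^*$-triple predual to $L^1(\mu)$ requires nothing extra, both cases being covered by the cited SWC results.

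I do not expect a serious obstacle here, as the argument is essentially a bookkeeping assembly of Theorem \ref{JBW} / Proposition \ref{lone} with Corollary \ref{applicationKST}. The one conceptual point worth stressing is that the whole-space Banach-Saks property is genuinely unavailable: $L^1[0,1]$ famously fails it, so one truly needs the localized, set-theoretic version of the Banach-Saks property supplied by super weak compactness, rather than any global property of the ambient space. This is precisely what makes the \emph{set} formulation of super weak compactness the right tool for recovering and extending Szlenk's theorem.
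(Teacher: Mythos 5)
Your proof is correct and follows exactly the paper's route: form the weakly compact set $K=\{x_n\}\cup\{x\}$, apply Theorem \ref{JBW} (or Proposition \ref{lone}) to get that $K$ is SWC, and invoke the Banach--Saks property from Corollary \ref{applicationKST}. The only difference is that you spell out the identification of the Ces\`aro limit with $x$ via weak convergence, a step the paper leaves implicit.
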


 \begin{proof} A weakly convergent sequence together with its limit is a weakly compact set, which in $L^1(\mu)$ or a JBW$^*$-triple predual is SWC and therefore, by Corollary \ref{applicationKST} has the Banach-Saks property.
 \end{proof}

Now we will consider subsets of $c_0$ which are families of  characteristic functions of finite sets of
${\Bbb N}$, namely sets of the form  $K=\{ \chi_F: F \in {\mathcal F}\}$ with ${\mathcal F} \subset [ \Bbb N ]^{<\omega}$. Here $[ \Bbb N ]^{<\omega}$ denotes the set of finite subsets of $\N$. For a finite set $F$, we denote $|F|$ its cardinality.  Observe that for every $p \in {\Bbb N}$ the family ${\mathcal F}= \{F \subset {\Bbb N}:  |F| \leq p \}$ produces a SWC set as it is covered by $I(p B_{\ell^2})$ where $I:\ell_2 \rightarrow c_0$ is the canonical injection, which is a SWC operator. We now give a necessary condition for such a set $K$ to be SWC in $c_0$.

\begin{prop}\label{c_0subsets}
Let ${\mathcal F} \subset [ \Bbb N ]^{<\omega}$ a family of subsets such that $K=\{ \chi_F: F \in {\mathcal F}\}$ is a SWC subset of $c_0$. Then, there exists $p\in \N$ and $C>0$ such that
$$\forall A \in [\Bbb N]^{<\omega},\ \  \big|\{ F \cap A: F \in {\mathcal F} \}\big| \le C|A|^p.$$
\end{prop}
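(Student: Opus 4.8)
The plan is to argue by contraposition: I want to show that if the counting bound fails for every choice of $p$ and $C$, then $K=\{\chi_F:F\in\mathcal F\}$ cannot be SWC. By Proposition \ref{SWC1}, to certify that $K$ is not relatively SWC it suffices to produce, for a fixed $\theta>0$ and for every $n\in\N$, points $x_1,\dots,x_n$ in $K$ whose initial and final convex blocks stay at distance $\geq\theta$. So the whole game is to extract, from an overly rich trace structure of the family $\mathcal F$, a long sequence of characteristic functions exhibiting the James-type separation $\mbox{dist}(\mbox{conv}\{x_j:j\le k\},\mbox{conv}\{x_j:j>k\})\ge\theta$. The negation of the conclusion says exactly that for each $p$ there is no constant $C$, i.e. $\sup_{A}|\{F\cap A:F\in\mathcal F\}|/|A|^p=+\infty$ for every $p$; this is the hypothesis I get to use.

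First I would fix the target $\theta$: since all points of $K$ are $0/1$-valued, the natural separation in the $c_0$-norm is $\theta=1$, because $\|\chi_F-\chi_G\|_\infty=1$ whenever $F\neq G$, and more importantly any functional of the form $e_i^*$ separates a convex combination supported off $i$ (value $0$) from one whose members all contain $i$ (value $1$). This observation reduces matters to a purely combinatorial extraction: I need to find, inside a set $A$ on which the traces $\{F\cap A:F\in\mathcal F\}$ are abundant, a chain of coordinates $i_1,\dots,i_n$ in $A$ and sets $F_1,\dots,F_n\in\mathcal F$ such that $i_k\in F_j$ precisely when $k\le j$ (or some comparable staircase pattern). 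Such a staircase immediately gives, via the functionals $e_{i_k}^*\in B_{\ell^1}=B_{c_0^*}$, the James separation with $\theta=1$ after the usual normalization, because $e_{i_k}^*$ vanishes on $\mbox{conv}\{\chi_{F_j}:j<k\}$ and equals $1$ on $\mbox{conv}\{\chi_{F_j}:j\ge k\}$, which is the format of Theorem \ref{james_sty}(ii).

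The main obstacle, and the technical heart of the argument, is the combinatorial extraction: turning a super-polynomial growth of the trace function $A\mapsto|\{F\cap A:F\in\mathcal F\}|$ into a long staircase. This is where a Sauer--Shelah / VC-dimension type dichotomy is the right tool. If the family of traces on every finite $A$ had VC-dimension bounded by some $d$, then Sauer--Shelah would force $|\{F\cap A:F\in\mathcal F\}|\le\sum_{i\le d}\binom{|A|}{i}=O(|A|^d)$, contradicting the failure of the polynomial bound; hence the VC-dimension of $\{F\cap A:F\in\mathcal F\}$ is unbounded in $A$. Unbounded VC-dimension yields, for every $n$, a set of $n$ coordinates that is shattered by $\mathcal F$, and a shattered set is exactly what lets me realize any prescribed intersection pattern, in particular the staircase $i_k\in F_j\iff k\le j$. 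I would then feed this staircase into Proposition \ref{SWC1} (or directly into Theorem \ref{james_sty}) to conclude that $K$ is not relatively SWC, completing the contrapositive. The only points requiring care are: confirming that $p$ in the statement can be taken to be the eventual VC-dimension bound $d$ (so that the quantifier ``there exists $p$'' in the conclusion matches the Sauer--Shelah exponent), and checking the elementary distance computation showing that the shattered staircase really produces $\theta$-separated convex blocks in the $c_0$-norm.
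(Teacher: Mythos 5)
Your proposal is correct and takes essentially the same route as the paper's own proof: both argue by contraposition, use the Sauer--Shelah lemma to extract from the super-polynomial growth of the trace counts a shattered set of arbitrary size, and then realize the staircase pattern $n_k \in F_j \iff k \le j$, which via the coordinate functionals $e_{n_k}^*$ yields condition (ii) of Proposition \ref{SWC1} with $\theta = 1$. The only cosmetic difference is that you phrase the extraction through unbounded VC-dimension, while the paper applies Sauer--Shelah directly by comparing the trace count with $\binom{N}{0}+\binom{N}{1}+\cdots+\binom{N}{p-1}$.
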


\begin{proof} Actually, we will show that the cardinality of the set in the statement is below $N^p$ for some $p \in {\Bbb N}$ and $N= |A|$ large enough. Assume that the result is false. So for any fixed $p \in {\Bbb N}$
the cardinality is not eventually bounded by $N^p$. Note that the expression
$$ {N \choose 0} + {N \choose 1}  + \dots + {N \choose p-1} $$
is a polynomial of degree $p-1$ on $N$, so by our assumption there is $N \in \N$ such that
$$ \big|\{ F \cap A: F \in {\mathcal F} \}\big| >  {N \choose 0} + \dots + {N \choose p-1} .  $$
The Sauer-Shelah Lemma then insures that there is a subset $S$ of $A$ with $|S|=p$ such that
$$ \{ F \cap S: F \in {\mathcal F} \} = 2^S. $$
Now, take an enumeration of $S=\{n_1,\dots, n_p\}$ and define points $(x_k)_{k=1}^p \subset K$ of the form $x_k=\chi_{F_k}$ where $F_k \in {\mathcal F}$ is such that $F_k \cap S=\{n_1,\dots, n_k\}$. In particular, $n_k\in F_k$ if and only if $i\ge k$, which clearly implies that $(x_k)_{k=1}^p$ satisfies
Proposition \ref{SWC1} whith $\theta =1$. As $p$ can be arbitrarily large, $K$ is not SWC.
\end{proof}

Another non trivial example leading to an explicit estimate is provided by the Schreier family
${\mathcal S}$, which is made up of those $F \in [{\Bbb N}]^{\omega}$ such that $|F| \leq \min(F)$,
we have that $\{\chi_F: F \in {\mathcal S}\}$ is weakly compact in $c_0$ while
$$ \big|\{ F \cap [1,N]: F \in {\mathcal S} \}\big| > 2^{\frac{N}{2}-1}.$$

However, the property of Proposition \ref{c_0subsets} does not characterize super weak compactness in $c_0$, as it is shown by the following example.

\begin{prop}\label{exem}
There exists a family ${\mathcal F}$ of finite subsets of $\N$ such that $K=\{ \chi_F: F \in {\mathcal F}\}$ is weakly compact, finitely dentable, non SWC and so that for every finite set $A \subset {\Bbb N}$ we have
$$ \big|\{ F \cap A: F \in {\mathcal F} \}\big| =  |A| +1.$$
\end{prop}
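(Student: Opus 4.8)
The plan is to exhibit an explicit family built from ``staircases inside blocks''. Partition $\N$ into consecutive finite blocks $B_1<B_2<\cdots$ with $|B_m|=m$, write $B_m=\{b_{m,1}<\cdots<b_{m,m}\}$, and let $\mathcal F$ consist of the empty set together with all initial segments $\{b_{m,1},\dots,b_{m,j}\}$ of each block ($1\le j\le m$). I would then verify the four required properties separately, the first three being routine and the last one being the real content.

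For the \emph{trace identity}, I would argue combinatorially. Since the blocks are pairwise disjoint and each $F\in\mathcal F$ is contained in a single block, a nonempty trace $F\cap A$ is a nonempty initial segment (in the block order) of $A\cap B_m$ for exactly one $m$, and conversely every such initial segment is realized because $\mathcal F$ contains all initial segments of $B_m$. Writing $a_m=|A\cap B_m|$, this gives $a_m$ distinct nonempty traces per block, pairwise distinct across blocks by disjointness of supports, hence $\sum_m a_m=|A|$ nonempty traces, plus the empty trace; so $|\{F\cap A:F\in\mathcal F\}|=|A|+1$.

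For \emph{weak compactness} I would use that on bounded subsets of $c_0$ the weak topology is the topology of coordinatewise convergence. Given a sequence $(F_n)$ in $\mathcal F$, either the hosting blocks $B_{m_n}$ repeat (so a subsequence is constant and converges in norm to a point of $K$), or $m_n\to\infty$, in which case $\min B_{m_n}\to\infty$ forces $\chi_{F_n}\to 0$ coordinatewise, i.e. weakly, with $0=\chi_\emptyset\in K$; by Eberlein--\v{S}mulian this shows $K$ is weakly compact. For \emph{non super weak compactness} I would apply Proposition \ref{SWC1}: inside a single block $B_m$ the points $x_k=\chi_{\{b_{m,1},\dots,b_{m,k}\}}$, $1\le k\le m$, form a James staircase, since the coordinate functional $e^*_{b_{m,k+1}}\in B_{\ell_1}$ vanishes on $\mathrm{conv}\{x_j:j\le k\}$ and equals $1$ on $\mathrm{conv}\{x_j:j>k\}$, giving separation $\theta=1$; as $m$ is arbitrary, $K$ is not relatively SWC.

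The delicate point, and the one I would treat most carefully, is \emph{finite dentability}, precisely because for convex sets it would be equivalent to super weak compactness (Proposition \ref{superequival}), so the non-convexity must be exploited. Here I would observe that $K$ is equilateral: any two distinct points of $K$ are at distance exactly $1$. Consequently, for $0<\varepsilon<1$ a slice $K\cap H$ has diameter $\le\varepsilon$ only when it is a singleton, so $[K]'_\varepsilon$ is obtained by deleting exactly those points of $K$ that can be isolated by a half-space. I would then check that every $\chi_F$ with $F=\{b_{m,1},\dots,b_{m,j}\}\neq\emptyset$ is so isolable, using the functional $x^*=\sum_{i\le j}e^*_{b_{m,i}}-d\,e^*_{b_{m,j+1}}$ (omitting the last term when $j=m$) with $d$ large: it separates $\chi_F$ from all shorter and all longer initial segments of $B_m$, and from every set supported off $B_m$. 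Hence $[K]'_\varepsilon\subseteq\{0\}$ after a single derivation, and since a singleton is erased by the very next derivation (a point has slices of diameter $0$), we get $[K]^2_\varepsilon=\emptyset$, i.e. $Dz(K,\varepsilon)\le 2$; for $\varepsilon\ge1$ one step suffices. Thus $K$ is finitely dentable. The main obstacle is exactly this last step: noticing that although $0$ cannot itself be exposed by a slice of small diameter, it is nonetheless removed at the second stage once it is left alone, which is what rescues finite dentability and keeps it from collapsing back to super weak compactness.
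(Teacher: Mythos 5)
Your proposal is correct and takes essentially the same approach as the paper: the paper's family $F_{n,m}=\{k:\frac{n(n-1)}{2}<k\le\frac{n(n-1)}{2}+m\}$ is exactly your ``all initial segments of consecutive blocks of increasing length, plus $\emptyset$'', with the same weakly-null-or-stationary argument for weak compactness, the same James staircase via Proposition \ref{SWC1} with $\theta=1$ inside a single block, and the same finite-dentability argument exposing each nonzero point of $K$ by a slice so that $[K]'_\varepsilon=\{0\}$ for $\varepsilon<1$. The only cosmetic difference is the exposing functional (the paper puts $-1$ on all remaining coordinates of the block where you put a single $-d$ on the next coordinate), and your explicit remark that $K$ is equilateral is a clean way of packaging the observation, implicit in the paper, that small slices must be singletons.
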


\begin{proof} Define for every $n \in {\Bbb N}$ and $m \in\{1,\ldots,n\}$ the sets
$$ F_{n,m} = \Big\{ k \in {\Bbb N}: \frac{n(n-1)}{2} < k \leq \frac{n(n-1)}{2}+ m \Big\}.$$
Consider the families ${\mathcal F}_n = \{F_{n,m}: 1\le m \leq n\}$ and ${\mathcal F} = \{\emptyset\} \cup \bigcup_{n \in {\Bbb N}} {\mathcal F}_n$. It is easy to see that any sequence in $K=\{ \chi_F: F \in {\mathcal F}\}$ admits a subsequence that is either stationary or weakly null. Therefore, $K$ is weakly compact. Note that for every $n$ in ${\Bbb N}$ the sequence $(x_m)_{m=1}^n = (\chi_{F_{n,m}})_{m=1}^n$ satisfies condition (ii) of Proposition \ref{SWC1} with $\theta=1$, and so $K$ is not SWC. Now we will show that any $x \in K \setminus \{0\}$ can be separated with a slice from the rest of $K$. If $x=\chi_{F_{n,m}}$ take $x^*=(a_k)_{k=1}^\infty \in \ell_1$, where $a_k$ is
$$1\ \text{if}\ k \in F_{n,m};\ \ -1\ \text{if}\ \frac{n(n-1)}{2} + m < k \leq \frac{n(n+1)}{2};\ \ 0\ \text{otherwise}.$$
With this choice, we have that
$\{x\} = \{y \in K: x^*(y) > m -1/2\}$, which shows that $[K]'_\varepsilon=\{0\}$ for any $\varepsilon <1$ and  implies the finite dentability of $K$. Finally, observe that
$$\big|\{ F \cap A: F \in {\mathcal F}_n, \ F \cap A\neq \emptyset\}\big| = \Big|A \cap \big(\frac{n(n-1)}{2},\frac{n(n+1)}{2}\big]\Big|$$ leading to the estimation of the statement (adding $1$ for $F=\emptyset$).
\end{proof}

\begin{rema} The set $K$ we just described is isometric to $\N$, equipped with the discrete metric, and the same is true for the subset $L$ of $c_0$ made of $0$ together with the elements of the  canonical basis of $c_0$. However, $K$ is not SWC, while $L$ is SWC (for instance, because it is weakly compact and included in the image of the unit ball of $\ell_2$ by the identity map from $\ell_2$ in $c_0$). This underlines the fact that convexity or the use of convex hulls seems unavoidable if one is looking for metrical characterizations of  super weak compactness.
\end{rema}

Now we will discuss super weak compactness of some subsets of the Bochner-Lebesgue space $L^p(X)=L^p([0,1], X)$. Our starting point is the following result of Beauzamy \cite[Proposition II.3]{beau1} restated in terms of super weak compactness.

\begin{theo}[Beauzamy]\label{Beauzamy} Let $X,Y$ be two Banach spaces and $p\in (1,+\infty)$. If $T:X \rightarrow Y$ is super weakly compact, then the induced operator $T_p:L^p(X) \rightarrow L^p(Y)$, defined by $T_pf(t)=T(f(t))$, is super weakly compact.
\end{theo}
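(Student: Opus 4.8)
The plan is to reduce the statement to two classical structural facts — one about super weakly compact operators, one about Bochner spaces — and then to glue them together through the ultrapower definition of super weak compactness recalled in the introduction.

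First I would \emph{factorize} $T$. Since $T$ is super weakly compact, equivalently uniformly convexifying, Beauzamy's factorization theorem \cite{beau1} (the super-reflexive analogue of the Davis--Figiel--Johnson--Pe\l czy\'nski factorization of weakly compact operators) yields a super-reflexive Banach space $Z$ and bounded operators $A:X\to Z$ and $B:Z\to Y$ with $T=B\circ A$. By Enflo's renorming theorem super-reflexivity lets us equip $Z$ with an equivalent uniformly convex norm; fixing such a norm changes $A$ and $B$ only by a fixed isomorphism, so we may assume $Z$ uniformly convex.

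Next I would transport the factorization through the $L^p$-Bochner construction. For $1<p<\infty$ the pointwise induced operators $A_p:L^p(X)\to L^p(Z)$ and $B_p:L^p(Z)\to L^p(Y)$, given by $A_pf=A\circ f$ and $B_pg=B\circ g$, are bounded (strong measurability is preserved since $A,B$ are continuous, and $\|A_p\|\le\|A\|$, $\|B_p\|\le\|B\|$), and they satisfy $T_p=B_p\circ A_p$ because $(T_pf)(t)=T(f(t))=B(A(f(t)))$ almost everywhere. The essential input here is the classical theorem of Day (Clarkson in the scalar case): for $1<p<\infty$, $L^p(Z)$ is uniformly convex whenever $Z$ is uniformly convex. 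This is exactly where the hypothesis $p\in(1,+\infty)$ is used, since uniform convexity of $L^p(Z)$ fails for $p\in\{1,\infty\}$ already when $Z$ is the scalar field.

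Finally I would conclude by an ultrapower argument. Uniform convexity passes to ultrapowers with the same modulus, so for any free ultrafilter $\mathcal U$ the space $(L^p(Z))^{\mathcal U}$ is uniformly convex, hence reflexive. Consequently $A_p^{\mathcal U}$ sends the unit ball of $(L^p(X))^{\mathcal U}$ into a bounded, therefore relatively weakly compact, subset of $(L^p(Z))^{\mathcal U}$, and the bounded operator $B_p^{\mathcal U}$ preserves relative weak compactness; since $T_p^{\mathcal U}=B_p^{\mathcal U}\circ A_p^{\mathcal U}$, the operator $T_p^{\mathcal U}$ is weakly compact. As $\mathcal U$ was arbitrary, $T_p$ is super weakly compact. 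The routine bookkeeping (boundedness and composition of the induced operators, functoriality of ultrapowers) is harmless; the step I expect to carry the real weight, and which explains the restriction to $1<p<\infty$, is the stability of uniform convexity under the $L^p$-Bochner functor together with its survival under ultrapowers.
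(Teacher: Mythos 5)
There is a genuine gap, and it is at the very first step. The ``Beauzamy factorization theorem'' you invoke --- that a super weakly compact $T:X\to Y$ factors as $T=B\circ A$ through a super-reflexive space $Z$ --- is not a theorem. What Beauzamy actually proved in \cite{beau1} is a \emph{renorming} statement: $T$ is super weakly compact if and only if one can put an equivalent norm on the domain making $T$ a uniformly convex \emph{operator}, i.e. such that $\delta_T(\varepsilon)=\inf\{1-\|\frac{x+y}{2}\|:\ x,y\in B_X,\ \|T(x-y)\|\geq \varepsilon\}>0$ for every $\varepsilon>0$. That is a condition on a norm of $X$ with separation measured through $T$, not a factorization through a uniformly convex space. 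Whether every super weakly compact operator factors through a super-reflexive space is a well-known open problem (see for instance the discussion in \cite{CauseyDilworth}); this is precisely why statement (iv) of Proposition \ref{superequival} and Theorem \ref{interpol} only provide a \emph{reflexive} $Z$ together with a super weakly compact operator, and why Section \ref{UCsets} of the paper uses \cite{beau1} as a renorming of the domain rather than a factorization. Note also that your proof makes the theorem trivial --- once $T$ factors through super-reflexive $Z$, the operator $T_p$ factors through $L^p(Z)$, which is super-reflexive by Day and Enflo, and you are done --- which is exactly the symptom of question-begging here: all of the difficulty of the statement has been displaced into the unavailable citation, since a factorization theorem of that strength would subsume the result being proved.

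Your remaining steps are correct but moot: the boundedness and composition of $A_p,B_p$, Day's theorem that $L^p(\mu,Z)$ is uniformly convex for $Z$ uniformly convex and $1<p<\infty$, and the stability of uniform convexity (with the same modulus) under ultrapowers are all fine. For comparison, the paper does not reprove the theorem at all; it restates Proposition II.3 of \cite{beau1}, whose actual proof runs your key idea at the level of the \emph{operator} modulus rather than of a space: fix an equivalent norm on $X$ with $\delta_T(\varepsilon)>0$, equip $L^p(X)$ with the corresponding Bochner norm, and establish the Clarkson--Day-type integral inequalities showing $\delta_{T_p}(\varepsilon)>0$, i.e. repeat the classical proof of uniform convexity of $L^p(\mu,Z)$ with $\|x-y\|$ replaced by $\|T(x-y)\|$ throughout (this is also where $p\in(1,+\infty)$ enters, as you correctly anticipated). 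So your instinct about where the weight lies is right, but the Day-type argument must be carried out for the two-norm operator modulus directly; it cannot be bought by a factorization through a super-reflexive space.
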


For a subset $K$ of $X$, we denote $L^p(K)$ the set of all $f$ in $L^p(X)$ whose essential range is included in $K$. Then, we deduce the following.

\begin{coro}
A subset $K$ of $X$ is SWC if and only if $L^p(K)$ is SWC for some or all $p$ in $(1,+\infty)$.
\end{coro}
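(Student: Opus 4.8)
The plan is to prove the stated equivalence as a cycle among the three assertions ``$K$ is SWC'', ``$L^p(K)$ is SWC for every $p\in(1,\infty)$'' and ``$L^p(K)$ is SWC for some $p\in(1,\infty)$'', using Theorem \ref{interpol} to translate back and forth between super weak compactness and containment in the image of a ball of a reflexive space under an SWC operator, and using Beauzamy's Theorem \ref{Beauzamy} to lift such an operator to the Bochner level. Throughout I use that for $1<p<\infty$ the Bochner space $L^p(Z)=L^p([0,1],Z)$ is reflexive whenever $Z$ is reflexive.

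For the forward implication, suppose $K$ is SWC. By Theorem \ref{interpol} there are a reflexive space $Z$ and an SWC operator $T\colon Z\to X$ with $K\subset T(B_Z)$. Beauzamy's Theorem \ref{Beauzamy} then yields that the induced operator $T_p\colon L^p(Z)\to L^p(X)$, $T_pg=T\circ g$, is SWC, while $L^p(Z)$ is reflexive. The key point is to check that $L^p(K)\subset T_p\big((1+\eps)B_{L^p(Z)}\big)$ for every $\eps>0$; granting this, the ``if'' direction of Theorem \ref{interpol}, applied to the reflexive space $L^p(Z)$ and the SWC operator $T_p$, shows that $L^p(K)$ is SWC, and this argument works for every $p\in(1,\infty)$.

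For the converse it suffices to treat a single $p$. Consider the constant-function map $J\colon X\to L^p(X)$, $J(x)=x\cdot\mathbf 1$, which is a linear isometry onto a closed (hence weakly closed) subspace of $L^p(X)$ and satisfies $J(K)\subset L^p(K)$. If $L^p(K)$ is SWC, then its subset $J(K)$ is relatively SWC; since the induced map $J^{\mathcal U}$ is an isometry, hence a weak-to-weak homeomorphism onto its range, and since $J^{\mathcal U}(K^{\mathcal U})=J(K)^{\mathcal U}$ lies in the weakly closed range of $J^{\mathcal U}$, relative weak compactness transfers back to $K^{\mathcal U}$. Thus $K$ is relatively SWC, and being weakly closed it is SWC, which closes the cycle.

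The main obstacle is the containment $L^p(K)\subset T_p\big((1+\eps)B_{L^p(Z)}\big)$, that is, the measurable lifting of a function through $T$. Given $f\in L^p(K)$ one has $f(t)\in K\subset T(B_Z)$ for a.e.\ $t$, so the multifunction $t\mapsto T^{-1}(f(t))\cap(1+\eps)B_Z$ has nonempty, closed and, by reflexivity of $Z$, weakly compact values; I expect to extract a Bochner measurable selection $g$ of it by a standard measurable selection theorem (Kuratowski--Ryll-Nardzewski), exploiting that $f$ has essentially separable range so that one may work inside a separable subspace. Such a $g$ satisfies $\|g(t)\|_Z\le 1+\eps$ a.e., hence $g\in(1+\eps)B_{L^p(Z)}$ and $T_pg=f$, which is exactly what is required.
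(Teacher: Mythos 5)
Your proposal is correct and follows essentially the same route as the paper: the forward direction factors $K \subset T(B_Z)$ through a reflexive space via Theorem \ref{interpol} and lifts $T$ to $T_p$ by Beauzamy's Theorem \ref{Beauzamy}, while the converse uses the isometric copy of $K$ as constant functions inside $L^p(K)$, exactly as in the paper's one-line argument. The only divergence is your measurable-selection verification of $L^p(K)\subset T_p\big((1+\eps)B_{L^p(Z)}\big)$, a step the paper asserts without comment; your argument works (after the separable reduction you indicate, noting that the values $T^{-1}(f(t))\cap(1+\eps)B_Z$ are closed, convex and weakly compact), but it can be bypassed entirely: simple $K$-valued functions lift trivially into $B_{L^p(Z)}$ and are dense in $L^p(K)$, while $T_p(B_{L^p(Z)})$ is weakly compact (by reflexivity of $L^p(Z)$ and weak-to-weak continuity of $T_p$) and hence norm closed, which yields the inclusion even with $\eps=0$.
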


\begin{proof} Note that $K$ is linearly isometric to a closed subset of $L^p(K)$. So we have only one implication to show and we assume that $K$ is SWC. So, there exists a Banach space $Z$ and a super weakly compact operator $T: Z\to X$ such that $K \subset T(B_Z)$. Then $L^p(K) \subset T_p(B_{L^p(Z)})$ is super weakly compact by Theorem \ref{Beauzamy}.
\end{proof}

This last result leads to the following characterization of SWC sets by means of the fragmentability index.

\begin{theo}
A weakly compact subset $K \subset X$ is SWC if and only if $L^2(K)$ is finitely fragmentable.
\end{theo}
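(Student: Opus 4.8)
The plan is to prove the two implications separately, the forward one being a quick consequence of the machinery already assembled, and the converse requiring an explicit construction inside $L^2(K)$.

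For the ``only if'' part, suppose $K$ is SWC. By the preceding corollary $L^2(K)$ is then SWC, hence finitely dentable by Corollary \ref{applicationKST}(c). Since every open half-space is weakly open, the fragmentability derivation removes at each step at least as much as the dentability derivation, so $(D)'_\varepsilon\subset[D]'_\varepsilon$ for every weakly closed $D$; combined with the monotonicity of both derivations under inclusion, iteration yields $(L^2(K))^{n}_\varepsilon\subset[L^2(K)]^{n}_\varepsilon$, whence $S(L^2(K),\varepsilon)\le Dz(L^2(K),\varepsilon)<\infty$ for all $\varepsilon>0$. Thus $L^2(K)$ is finitely fragmentable.

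The substance of the theorem is the converse, which I would establish in contrapositive form: if $K$ is weakly compact but \emph{not} SWC, then $L^2(K)$ is not finitely fragmentable. One cannot argue abstractly here, because finite fragmentability is in general strictly weaker than finite dentability, and the latter does not even force super weak compactness (Proposition \ref{exem}); the averaging structure of $L^2$ must genuinely be exploited. Starting from Proposition \ref{SWC1} I would fix $\theta>0$ and, for each $n$, pick $2^{n}$ points of $K$ separating consecutive convex hulls by $\theta$, then average them over dyadic partitions to obtain a $\theta$-separated dyadic tree $(y_s)_{|s|\le n}$ in $\overline{\conv}(K)$ with $y_s=\frac12(y_{s0}+y_{s1})$ and sibling distance at least $\theta$ (the strong separation recorded in the corollary following Proposition \ref{SWC1}). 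I would then \emph{realise} this tree as a $K$-valued dyadic martingale $(h_k)_{k\le n}$ in $L^2(K)$: identifying the branch index with a spatial dyadic filtration of $[0,1]$, each node value $y_s$, being a convex combination of points of $K$, is reproduced as the average over the corresponding dyadic block of a $K$-valued step function, so that the conditional expectations of $h_n$ recover the tree and consecutive martingale differences have $L^2$-norm at least $\theta/2$.

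The key step, and the one I expect to be the main obstacle, is to show that the weakly closed set $D_n\subset L^2(K)$ consisting of these martingale functions together with all their block-wise rearrangements loses at most its top (leaf) level under one fragmentability derivation, so that $(D_n)^{n}_{\,c\theta}\neq\emptyset$ for a fixed constant $c>0$; by monotonicity of the derivation under the inclusion $D_n\subset L^2(K)$ this forces $S(L^2(K),c\theta)>n$ for every $n$, i.e. non finite fragmentability. The difficulty is that the derivation removes \emph{arbitrary} weakly open sets, not merely slices. To handle a weakly open $V\ni h$ determined by finitely many functionals $\phi_1,\dots,\phi_m\in L^2(X)^*$, I would approximate the representable part of each $\phi_i$, up to any prescribed $\eta>0$, by its conditional expectation onto a sufficiently fine finite dyadic algebra; a rearrangement of the $K$-valued function $h$ within those blocks then preserves all the averages seen by the $\phi_i$ to within $\eta\|h\|$, while changing $h$ by at least $\theta/2$ in $L^2$-norm, thereby producing two points of $D_n\cap V$ at distance $\ge c\theta$. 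The non-representable functionals of $L^2(X)^*$ are controlled through the weak compactness of $K$, and it is precisely this averaging-plus-approximation mechanism that lets the $L^2$-fragmentability detect the convexified separation encoding the failure of super weak compactness. Combining the two implications gives the stated equivalence.
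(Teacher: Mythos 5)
Your ``only if'' half is correct and is essentially the paper's argument: $L^2(K)$ is SWC (via Beauzamy's Theorem \ref{Beauzamy} and its corollary), hence finitely dentable, and $S(\cdot,\eps)\le Dz(\cdot,\eps)$ because every open half-space is weakly open. The converse, however, is not proved: your central claim --- that the set $D_n$ of martingale realizations loses at most its top level under one fragmentability derivation, so that $(D_n)^n_{c\theta}\neq\emptyset$ --- is exactly the quantitative heart of the theorem, and you explicitly defer it as ``the main obstacle'', offering only a heuristic. The paper does not reprove this step either; it imports it as the inequality $Dz(H,\eps)\le S\big(L^2(H),\frac{\eps}{2}\big)$ from \cite{LancienT,Lancien3}, after first reducing the problem from $K$-valued to $H$-valued functions ($H=\overline{\conv}(K)$) via the averaging operator $T(f)(t)=\int_0^1 f(t,s)\,ds$ and an isometry identifying $L^2([0,1]^2,K)$ with $L^2(K)$. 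A secondary weakness: the non-representable part of $L^2(X)^*$ is dismissed with the phrase ``controlled through the weak compactness of $K$'', which is not an argument. (It can plausibly be repaired --- bounded uniformly integrable families with values in a fixed weakly compact set are relatively weakly compact, and $L^2(X^*)$ separates points of $L^2(X)$, so weak limits can be identified against representable functionals --- but this has to be written out.)

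There is also a concrete inconsistency in the construction as described. The nodes $y_s$ of the tree obtained from Proposition \ref{SWC1} lie in $\overline{\conv}(K)$, not in $K$; hence the intermediate martingale levels $h_k=\mathbb{E}(h_n\mid\mathcal{F}_k)$, $k<n$, are $\conv(K)$-valued. So $D_n\not\subset L^2(K)$ as soon as it contains these levels, and the monotonicity step from $(D_n)^n_{c\theta}\neq\emptyset$ to $S(L^2(K),c\theta)>n$ collapses. (For the same reason the claimed weak closedness of $D_n$ inside $L^2(K)$ is suspect: $L^2(K)$ itself is not weakly closed --- Rademacher-type mixings of $K$-valued functions converge weakly to $\conv(K)$-valued ones.) If instead you keep only the $K$-valued leaf functions and their rearrangements, the ``peel off one level per derivation'' picture has no lower levels to retain; each node $y_s$ must first be re-realized by a genuinely $K$-valued function whose averages over suitable dyadic blocks reproduce $y_s$, and you must show such realizations are weak limits of norm-separated realizations of their children. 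That is structurally what the paper's averaging operator accomplishes: finite fragmentability of $L^2(K)$ is pushed \emph{forward} onto $L^2(H)$ (note that the inclusion $L^2(K)\subset L^2(H)$ goes in the useless direction for your contrapositive-by-subsets strategy, which is why the paper argues directly), and then the cited inequality yields $Dz(H,\eps)<\infty$, hence $H$ and $K$ SWC. Until the derivation-counting claim is proved for a corrected, genuinely $K$-valued family $D_n$, the proposal does not establish the converse.
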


\begin{proof} Let $H$ denote the closed convex hull of $K$. Assume first that $K$ is SWC, then so is $H$ and then $L^2(H)$ is SWC. Indeed, property (iv) in Proposition \ref{superequival} insures the existence of a super weakly compact operator $T:Z\to X$ with $Z$ a reflexive Banach and such that $H\subset T(B_Z)$. Then, we obtain that $L^2(H)$ is a subset of $T_2(B_{L^2(Z)})$, with $L^2(Z)$ being reflexive and $T_2$ super weakly compact by Theorem  \ref{Beauzamy}. Thus $L^2(H)$ is SWC and hence finitely dentable. Note now that $L^2(K) \subset L^2(H)$ and that its fragmentability index is bounded by its dentability index and therefore is finite.

Assume now that $L^2(K)$ is finitely fragmentable. Firstly we will reduce the problem to the convex case by showing that $L^2(H)$ is finitely fragmentable. In order to do that, consider the bounded operator $T: L^2([0,1]^2,X) \rightarrow L^2([0,1],X)$ defined by
$$ T(f)(t) = \int_0^1 f(t,s) \, ds .$$
Let us remark that there is an isometry  $U$ from $L^2([0,1]^2,X)$ onto $L^2([0,1],X)$ such that $U(L^2([0,1]^2,K))=L^2([0,1],K)$. Now note that any simple function with values in $\mbox{conv}(K)$ can be uniformly approximated by elements from $T(L^2([0,1]^2,K))$. That implies that $L^2(H) \subset T(L^2([0,1]^2,K)) $ and thus the desired result, as the fragmentability index of the linear continuous image remains finite. Now we can apply an argument of the first named author \cite{LancienT, Lancien3} that states
$$ Dz(H, \varepsilon) \leq S\big(L^2(H), \frac{\varepsilon}{2}\big).$$
Therefore $H$ is finitely dentable and hence SWC.
\end{proof}

\noindent{\bf Acknowledgements.} This work was initiated while the first named author was visiting the Universidad de Murcia and completed while the second named author was a visiting professor at the Universit\'e de Franche-Comt\'e. Both authors wish to thank these institutions for their support. The authors also thank O. Kalenda for providing us with Theorem \ref{JBW} and G. Grelier for fruitful discussions.

{\footnotesize

}

\end{document}